\theoremstyle{plain}
\newtheorem{thm}{Theorem}[section]
\newtheorem{lem}[thm]{Lemma}
\newtheorem{prop}[thm]{Proposition}
\newtheorem{cor}[thm]{Corollary}
\theoremstyle{definition}
\newtheorem{deff}[thm]{Definition}
\newtheorem{example}[thm]{Example}
\newtheorem{remark}[thm]{Remark}
\theoremstyle{remark}
\def\k{\ensuremath{\bold{k}}}
\newcommand*{\e}{\ensuremath{\varepsilon}}
\newcommand*{\field}{\ensuremath{\bold{k}}}
\newcommand*{\Hom}{\ensuremath{\text{\upshape Hom}}}
\newcommand*{\Ext}{\ensuremath{\text{\upshape Ext}}}
\newcommand*{\Ker}{\ensuremath{\text{\upshape Ker}}}
\newcommand*{\Img}{\ensuremath{\text{\upshape Im}}}
\newcommand*{\ad}{\ensuremath{\text{\upshape ad}}}
\newcommand*{\Prim}{\ensuremath{\text{\upshape P}}}
\newcommand*{\End}{\ensuremath{\text{\upshape End}}}
\newcommand*{\Aut}{\ensuremath{\text{\upshape Aut}}}
\newcommand*{\Diag}{\ensuremath{\text{\upshape diag}}}
\newcommand*{\Der}{\ensuremath{\text{\upshape Der}}}
\newcommand*{\GL}{\ensuremath{\text{\upshape GL}}}
\newcommand*{\HL}{\ensuremath{\text{\upshape H}}}
\newcommand*{\T}{\ensuremath{\text{\upshape T}}}
\newcommand*{\C}{\ensuremath{\text{\upshape C}}}
\def\dim{\operatorname{dim}}
\newcommand*{\Bock}{\boldsymbol{\omega}}
\DeclarePairedDelimiterX\set[1]\lbrace\rbrace{\def\given{\;\delimsize\vert\;}#1}
\begin{document}
\numberwithin{equation}{thm}
\thispagestyle{empty}

\title{Primitive Deformations of Quantum $p$-groups}

\author{Van C. Nguyen\,*}
\author{Linhong Wang}
\author{Xingting Wang}

\address{Department of Mathematics\\
Northeastern University\\
Boston, MA 02115}
\email{v.nguyen@neu.edu}

\address{Department of Mathematics\\
University of Pittsburgh \\
Pittsburgh, PA 15260}
\email{lhwang@pitt.edu}

\address{Department of Mathematics\\
Temple University, Philadelphia\\
Philadelphia, PA,  19122-6094}
\email{xingting@temple.edu}

\thanks{* corresponding author.}

\keywords{$p$-groups, quantum groups, Lie algebras, positive characteristic, connected Hopf algebras}

\subjclass[2010]{16T05, 17B60}

\begin{abstract}
For finite-dimensional Hopf algebras, their classification in characteristic $0$ (e.g. over $\mathbb{C}$) has been investigated for decades with many fruitful results, but their structures in positive characteristic have remained elusive.  In this paper, working over an algebraically closed field $\field$ of prime characteristic $p$, we introduce the concept, called Primitive Deformation, to provide a structured technique to classify certain finite-dimensional connected Hopf algebras which are almost primitively generated; that is, these connected Hopf algebras are $p^{n+1}$-dimensional, whose primitive spaces are abelian restricted Lie algebras of dimension $n$. We illustrate this technique for the case $n=2$. Together with our results in \cite{NWW1}, we provide a complete classification of $p^3$-dimensional connected Hopf algebras over $\field$ of characteristic $p>2$.
\end{abstract}

\maketitle


\section*{Introduction}

The classification of finite-dimensional Hopf algebras over a field of characteristic $0$ is well investigated by many researchers, among them are: Andruskiewitsch, Etingof, Gelaki, Masuoka, Montgomery, Natale, Ng, Radford, Schneider, Zhu, etc., see \cite{An, Bea,BG} for surveys of classification results in characteristic $0$. While this work is stimulating in its own right, many Hopf algebras of interest are, however, defined over a field $\k$ of positive characteristic, where the classification is much less known, e.g. \cite{CLW, EGelaki, Masu}. Our aim is to explore the classification over such a field $\k$. It is important to emphasize that this is a very difficult problem. We face some formidable challenges as many of the tools used over $\mathbb{C}$ do not apply in positive characteristic. The structures of such Hopf algebras essentially vary depending on the characteristic of the field and its relation with the dimensions of the Hopf algebras. Therefore, part of the challenge is to develop new applicable classification techniques in positive characteristic. 

Hopf algebras in characteristic $0$ are completely classified for certain low dimensions and many of them turn out to be trivial, that is, they are either group algebras or the dual of group algebras. On the other hand, the Hopf algebra structures in positive characteristic are much more complicated. In \cite{EGelaki}, Etingof and Gelaki studied the structures of finite-dimensional semisimple and cosemisimple Hopf algebras in positive characteristic, using their celebrated lifting theorem that allows them to lift these algebras to some semisimple Hopf algebras in characteristic $0$. In \cite{S}, Scherotzke classified finite-dimensional pointed rank one Hopf algebras in positive characteristic which are generated by group-like and skew-primitive elements. Gongxiang and Yu \cite{GY} studied the structures of monomial Hopf algebras over a field of positive characteristic and provided a necessary and sufficient condition for the monomial coalgebra to admit Hopf structures. Many Hopf algebras in positive characteristic also come from Nichols algebras which are of much interest, see \cite{CLW, HW, OZ}. In each of these results, we see that different techniques must be employed and new examples of Hopf algebras arise. There are more Hopf structure possibilities in positive characteristic than occur in characteristic $0$. This calls for a more general approach to tackle certain types of Hopf algebras in positive characteristic. We present here our attempt for this difficult task. 

\FloatBarrier
$$
\begin{tikzpicture}
\tikzstyle{ibox}=[draw=black,shape=rectangle,very thick,fill=red!18];
\node[ibox, align=center, below] (n1) at (-2,0)  {\textbf{Goal:} Classify finite-dimensional connected Hopf algebras in characteristic $p>0$.}; 
\end{tikzpicture}
$$
These algebras are interesting in their own right; they all have dimensions of powers of $p$ \cite[Proposition 1.1 (1)]{Masu}, and they are closely related to Lie theory, algebraic group schemes, and group theory, cf.~Section~\ref{sec:prelim}.

We fix the notation and recall preliminary concepts in Section~\ref{sec:prelim}. In Section~\ref{sec:PD theory}, we introduce a technique called Primitive Deformation, in which we think of finite-dimensional connected Hopf algebras as finite quantum $p$-groups (Definition~\ref{quantum}) and we apply our technique to the special family (cf.~\cite[Definition~0.2]{XWang}):

\begin{center}
$\mathscr X(n) := \displaystyle \{$all $p^{n+1}$-dimensional quantum $p$-groups whose primitive space \\ \quad \quad is an abelian restricted Lie algebra of dimension $n \}.$
\end{center}

In \cite{XWang}, quantum $p$-groups in $\mathscr X(n)$ arise as Hopf deformations of certain restricted universal enveloping algebras, whose underlying restricted Lie algebras are obtained by semiproducts of restricted Lie algebras presented by abelian types $\T=(\mathfrak g, \mathfrak h, \rho)$, cf.~Definition~\ref{D:Type}, with restriction on the resulting primitive space. Moreover, using primitive cohomology, isomorphism classes of quantum $p$-groups can be described as a cohomological-type set quotient by the automorphism group of each abelian type. 
Our strategy is to consider all possible abelian types $\T$. We then give conditions under which these types $\T$ can be associated to primitive deformations (Proposition~\ref{P:RCdata}). By Theorem~\ref{TH:HopfO}, equivalence classes of these primitive deformations are in 1-1 correspondence with the isomorphism classes of quantum $p$-groups in $\mathscr X(n)$. We describe these classes explicitly as some fixed cohomological-type set in Section~\ref{sec:iso classes of PD} and further realize these classes as certain points in affine spaces. This helps us to break our classification problem into permissible types and nonpermissible types, see Section~\ref{sec:realization}. We illustrate this concept for $\mathscr X(2)$, with $p>2$, in Section~\ref{sec:X(2)} by directly computing and explicitly describing all $p^3$-dimensional quantum $p$-groups in $\mathscr X(2)$. We observe that the classification in $\mathscr X(2)$ contains \emph{infinite} parametric families and leads us to many new examples of non-commutative, non-cocommutative Hopf algebras in positive characteristic. It further helps us to complete the classification of all $p^3$-dimensional connected Hopf algebras, whose structures are provided in Appendix~\ref{Appen:complete classification}. We summarize the classification results here:  

\newpage
\FloatBarrier
\begin{table}[!htp]
\begin{center}
\caption{Classification of connected $p^3$-dim Hopf algebras}  
\label{tab:summary} \vspace{-0.2cm}
\begin{tabular}{| p{2.1cm} | p{2.5cm} | p{3.2cm}|} 
\hline         
$\Prim(H)$ & $u(\Prim(H)) \subseteq H$ & \bf{Results}    \\
\hline \hline
1-dim & $p$-dim & \cite[Theorem 1.1]{NWW1}, Table \ref{tab:typeA} \\ \hline
2-dim, non-abelian & $p^2$-dim, non-commutative &  \cite[Theorem 1.3]{NWW1}, Table \ref{tab:typeB}  \\ \hline
2-dim, abelian & $p^2$-dim, $p>2$, commutative &  Section~\ref{sec:X(2)}, Table \ref{tab:typeT} \\ \hline
3-dim & $p^3$-dim &  \cite[Theorem 1.4]{NWW1}, Table \ref{tab:typeC}  \\ \hline
\end{tabular}
\end{center}
\end{table}

Thus, together with results in \cite{wang2012connected}, the classifications in characteristic $p>2$ for connected Hopf algebras of dimensions $p$, $p^2$, and $p^3$ are completed. This milestone brings us closer to understanding the structures of Hopf algebras in positive characteristic. To our best knowledge, the current status of Hopf classifications in characteristic $p>0$ can be described in the following diagram:


\FloatBarrier
$$
\begin{tikzpicture}
\tikzstyle{iellipse}=[draw=black,shape=ellipse,very thick,fill=green!18!white];  
\tikzstyle{ibox}=[draw=black,shape=rectangle,very thick,fill=red!18];
\node[ibox, align=center, below] (n1) at (2,0)  {Classification of f.d.~Hopf algs.~$H$ \\ in characteristic $p>0$}; 
\node[iellipse, align=center, below] (n2) at (-1,-2)  {non-pointed, \\ \texttt{[Open]}};
\node[iellipse, align=center, below] (n3) at (5,-2)  {pointed, \\ i.e.~$H_0=\field[G]$};
\node[ibox, align=center, below] (n4) at (1.2,-5)  {\underline{$G=\{e\}$, i.e. $H$ is connected,} \\ $\bullet H$-dim$=p$ and $p^2$: \texttt{\cite{wang2012connected}} \\ $\bullet H$-dim$=p^3, p>2$: \texttt{\cite{NWW1} \& Sec.~\ref{sec:X(2)}} \\ $\bullet H$-dim$=p^n, \, n \geq 4$: \texttt{[Open]}};
\node[ibox, align=center, below] (n5) at (8.5,-5)  {\underline{$|G| \geq 2$ \& Others \texttt{[Open]}} \\ $\bullet H$ pointed rank one: \texttt{\cite{S}} \\ $\bullet H$-dim$=p^2$ and $|G|=p$ or $p^2$: \texttt{\cite{WangWang}} \\ $\bullet H$ semisimple and cosemisimple: \texttt{\cite{EGelaki}} };
\draw[->, very thick, >=stealth] (n1) -- (n2);
\draw[->, very thick, >=stealth] (n1) -- (n3);
\draw[->, very thick, >=stealth] (n3) -- (n4);
\draw[->, very thick, >=stealth] (n3) -- (n5);
\end{tikzpicture}
$$

\noindent
{\bf Applications.}
Our classification results in Section~\ref{sec:X(2)} provide some new examples of connected Hopf algebras of dimension $p^3$ in positive characteristic. We remark two other important applications that this work carries:
\begin{enumerate}
 \item By \cite[Proposition~5.2.9]{MO93}, for any connected coalgebra $C$, its $\k$-dual $C^*$ is a local algebra. We can derive from this result that any finite-dimensional Hopf algebra $H$ is local if and only if its dual $H^*$ is connected. Hence, our classifications are equivalent to the classifications of \emph{local} Hopf algebras of dimension $p^3$.
 \vspace{0.2cm}
 \item By \cite[\S 8.3]{Waterhouse}, an affine algebraic group scheme $G$ is \emph{unipotent} if and only if the polynomial functions $\mathcal{O}(G)$ on $G$ is a connected {\bf commutative} Hopf algebra. Thus, our results can also be related to unipotent group schemes $G$ of order $p^3$. Moreover, we know that in characteristic $0$, unipotent group is isomorphic to an algebraic closed subgroup of some $\mathbb{U}_n$ of strict upper triangular matrices. In positive characteristic, structures of unipotent groups are more complicated. 
Interested readers can refer to \cite{Milne, Waterhouse} and utilize our results to understand more about this type of algebraic groups in positive characteristic.
\end{enumerate}


\section{Preliminary}
\label{sec:prelim}

Throughout the paper, we work over a base field $\field$, algebraically closed of prime characteristic $p$. Tensor products and linear maps are over $\field$ unless stated otherwise. For any vector space $V$, we write $V^{\otimes n}$ for its $n$-fold tensor product and 1 for the identity map on $V$. 
 
\begin{deff}\cite[Definition 5.1.5]{MO93}
Let $C$ be a coalgebra over $\field$. The \emph{coradical} $C_0$ of $C$ is the sum of all simple subcoalgebras of $C$, and $C$ is \emph{connected} if $C_0$ is one-dimensional. A Hopf algebra $H$ is said to be \emph{connected} if it is connected as a coalgebra. 
\end{deff}

In the literature, connected Hopf algebras are often called by different names, such as irreducible or co-connected Hopf algebras. 

\begin{example}
Let $G$ be a finite group. The vector space dual $\field ^G:=\Hom_\field(\field[G],\field)$ is equipped with a Hopf algebra structure by dualizing all the Hopf structure maps of $\field[G]$, namely, for any $g,h\in G$,
\[
\delta_g\delta_h=\begin{cases}
\delta_g, & g=h\\
0,           & g\neq h
\end{cases},
\quad
\Delta(g)=\sum_{h\in G}\delta_{gh^{-1}}\otimes \delta_{h}, \quad S(\delta_g)=\delta_{g^{-1}}, \quad \epsilon(\delta_g)=\begin{cases} 1, & g=e \\ 0, & g \neq e \end{cases},
\]
where $\delta_g$ is the dual basis of $g$ on $k[G]$, and $e$ is the identity in $G$. When $G$ is a finite p-group, $\field^G$ is a connected Hopf algebra of dimension $|G|$.
\end{example}

We recall a result by Masuoka clarifying when a finite-dimensional connected Hopf algebra $H$ is isomorphic to some $\field^G$.

\begin{thm}\cite{Masu,XWangSS}
Let $H$ be a finite-dimensional connected Hopf algebra over $\field$. Then the following are equivalent. 
\begin{itemize}
\item[(i)] $H$ is semisimple.
\item[(ii)] $H$ is commutative and semisimple.
\item[(iii)] The primitive space of $H$ is a torus.
\item[(iv)] $H\cong \field^G$, for some finite $p$-group $G$.
\end{itemize}
\end{thm}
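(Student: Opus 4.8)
The plan is to establish the equivalences around the cycle $(\mathrm{ii})\Rightarrow(\mathrm{i})\Rightarrow(\mathrm{iii})\Rightarrow(\mathrm{iv})\Rightarrow(\mathrm{ii})$, arranged so that the real content is concentrated in the arrow $(\mathrm{iii})\Rightarrow(\mathrm{iv})$. Two arrows are immediate. The implication $(\mathrm{ii})\Rightarrow(\mathrm{i})$ is trivial. For $(\mathrm{iv})\Rightarrow(\mathrm{ii})$, the relations $\delta_g\delta_h=\delta_{g,h}\delta_g$ exhibit $\{\delta_g\}_{g\in G}$ as a complete set of orthogonal idempotents with $\sum_g\delta_g=1$, so that $\field^G\cong\prod_{g\in G}\field$ as an algebra; this is manifestly commutative and semisimple.

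For $(\mathrm{i})\Rightarrow(\mathrm{iii})$ I would work with the Hopf subalgebra $u(\Prim(H))\subseteq H$. By the Nichols--Zoeller theorem $H$ is free over it, and a Hopf subalgebra of a finite-dimensional semisimple Hopf algebra is again semisimple, so $u(\Prim(H))$ is semisimple. I would then apply the classical criterion that, for a finite-dimensional restricted Lie algebra $\g$ over an algebraically closed field of characteristic $p$, the restricted enveloping algebra $u(\g)$ is semisimple if and only if $\g$ is a torus: a toral generator $x$ with $x^{[p]}=x$ contributes the separable factor $x^{p}-x=\prod_{a\in\F_p}(x-a)$, whereas a nonzero $p$-nilpotent part contributes an inseparable factor, and the restricted Jordan decomposition separates the two, so that semisimplicity forces $\g$ to be abelian with bijective $p$-operation, i.e.\ a torus. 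Hence $\Prim(H)$ is a torus.

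The substance lies in $(\mathrm{iii})\Rightarrow(\mathrm{iv})$, where I would first reduce to showing that $H$ is commutative. Granting commutativity, $H$ is the coordinate ring of the finite commutative group scheme $\Gamma=\Spec H$. Connectedness of $H$ says that its only grouplike is $1$, i.e.\ $\Hom(\Gamma,\mathbb{G}_m)=0$, so $\Gamma$ is unipotent; and the torus condition, namely bijectivity of the Frobenius on $\Prim(H)=\Hom(\Gamma,\mathbb{G}_a)$, rules out the (necessarily unipotent) infinitesimal part of $\Gamma$, so $\Gamma$ is \'etale. An \'etale unipotent finite group scheme over the algebraically closed field $\field$ is the constant group scheme of a finite $p$-group $G$, and therefore $H\cong\field^{G}$. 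Equivalently and more concretely, once $H$ is commutative the torus condition makes $H$ reduced, hence $H\cong\prod\field$ as an algebra, and its coalgebra structure equips the finite set $\Spec H$ with the structure of a $p$-group $G$ with $H\cong\field^{G}$.

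The main obstacle is therefore exactly the commutativity of $H$ (dually, the cocommutativity of the local Hopf algebra $H^{*}$, i.e.\ the absence of any matrix block of size $>1$ in a semisimple connected Hopf algebra); I expect this to require genuine positive-characteristic input, since a subalgebra of a semisimple algebra need not be semisimple and the naive dualization fails. The route I would pursue is a d\'evissage on $\dim H$ using the Hopf subalgebra $K:=u(\Prim(H))\cong\field^{(\Z/p)^{n}}$: since $K$ is generated by the $\ad$-stable subspace $\Prim(H)$ it should be a normal Hopf subalgebra, so one may form the connected quotient Hopf algebra $\bar H=H/HK^{+}$ of strictly smaller dimension, verify that $\Prim(\bar H)$ is again a torus, apply the inductive hypothesis to obtain $\bar H\cong\field^{\bar G}$, and reassemble the extension $K\to H\to\bar H$ as $\field^{G}$ for a suitable extension $G$ of $p$-groups (this d\'evissage would also yield $(\mathrm{iv})$ directly, commutativity emerging as a byproduct). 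The delicate points, and where I expect the real difficulty to reside and the structure theory behind the cited references to enter, are the normality of $K$ and the control of $\Prim(\bar H)$ under passage to the quotient.
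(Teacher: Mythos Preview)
The paper does not supply its own proof of this theorem: it is stated as a quoted result with the citation \cite{Masu,XWangSS} and no \texttt{proof} environment follows. So there is nothing in the present paper to compare your argument against beyond the bare attribution to Masuoka and Wang.

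Your outline is a reasonable reconstruction of how such a proof goes, and you correctly isolate $(\mathrm{iii})\Rightarrow(\mathrm{iv})$ as the nontrivial step. A couple of remarks on the easier arrows. For $(\mathrm{i})\Rightarrow(\mathrm{iii})$, the assertion ``a Hopf subalgebra of a finite-dimensional semisimple Hopf algebra is semisimple'' deserves one line of justification in positive characteristic: by Nichols--Zoeller $H$ is free over $K=u(\Prim(H))$, so any projective $H$-module (in particular the trivial module $\field$) restricts to a projective $K$-module, whence $K$ is semisimple. The subsequent identification of semisimple $u(\g)$ with toral $\g$ is standard (Hochschild). For $(\mathrm{iii})\Rightarrow(\mathrm{iv})$ your d\'evissage along $K\hookrightarrow H\twoheadrightarrow \bar H$ is indeed the strategy in Masuoka's argument; the two points you flag---normality of $K$ (which holds because $\Prim(H)$ is $\ad$-stable, hence $K$ is) and showing that $\Prim(\bar H)$ is again a torus---are exactly where the work lies, and the second of these is genuinely delicate, so your caveat is appropriate rather than a gap you have overlooked.

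In short: your proposal is a sound sketch consistent with the cited literature, but the paper itself offers no proof to compare it with.
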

The \emph{primitive space} of $H$ is the set $\Prim(H):=\{x\in H\, |\, \Delta(x)=x\otimes 1+1\otimes x\}$. We say that $\Prim(H)$ is a \emph{torus} 
if it does not contain any nonzero element $x$ with $x^p=0$, which is, by \cite[Proposition 1.3]{Masu}, equivalent to the original definition given by Strade and Farnsteiner \cite[p.~86]{StradeFarn}. Now, we propose the quantized version of a finite $p$-group. 

\begin{deff}\label{quantum} 
A finite \emph{quantum $p$-group} is a finite-dimensional connected Hopf algebra in characteristic $p$.
\end{deff}

By Masuoka's result, a semisimple quantum $p$-group over $\field$ is always given by some finite $p$-group. Moreover, any commutative quantum $p$-group over $\field$ represents a unipotent group scheme over $\field$ \cite{Waterhouse}. Another rich source of quantum $p$-groups comes from the restricted universal enveloping algebra of a restricted Lie algebra, which was first defined by Jacobson in \cite{Jac37}  as an analog of derivations of associative algebras in characteristic $p$.

\begin{deff}\cite[Definition on p.~16]{Jac41}\label{D:RLie}
A Lie algebra $\mathfrak g$ over $\field$ is \emph{restricted} if there exists a map $\mathfrak g\to \mathfrak g$, $x\mapsto x^{[p]}$ satisfying, for all $x,y\in \mathfrak g$, $\alpha\in \field$:
\begin{itemize}
\item[(i)] $(\alpha x)^{[p]}=\alpha^p x^{[p]}$,
\item[(ii)] $(x+y)^{[p]}=x^{[p]}+y^{[p]}+\sum_{i=1}^{p-1}s_i(x,y)$, where $is_i(x,y)$ is the coefficient of $\lambda^{i-1}$ in $x(\ad \, (\lambda x+y))^{p-1}$,
\item[(iii)] $[x,y^{[p]}]=x(\ad \, y)^p$. 
\end{itemize} 
\end{deff}

\begin{example}\label{E:RLA}
Let $A$ be an associative algebra over our field $\field$ of characteristic $p$. Suppose $V$ is a subspace of $A$, which is invariant under the commutator and $p$-th power map of $A$, i.e., $[V,V]\subseteq V$ and $V^p\subseteq V$. Then $V$ is a restricted Lie algebra, where the Lie bracket is given by the commutator of $A$ and the restricted map is given by the $p$-th power map of $A$. In particular, for any Hopf algebra $H$ over $\field$ and for any $x,y\in \Prim(H)$, one sees easily that 
\begin{align*}
\Delta\left(x^p\right)&=(x\otimes 1+1\otimes x)^p=x^p\otimes 1+1\otimes x^p;\ \text{and}\\ 
\Delta\left([x,y]\right)&=\left[\Delta(x),\Delta(y)\right]=[x,y]\otimes 1+1\otimes [x,y].
\end{align*}
Hence, $x^p,[x,y]\in \Prim(H)$, and the primitive space $\Prim(H)$ is a restricted Lie algebra equipped with the Lie bracket and the restricted map given by the commutator and the $p$-th power map of $H$, respectively.  
\end{example}

\begin{deff}
Let $\mathfrak g$ be a restricted Lie algebra and $U(\mathfrak \mathfrak g)$ be its universal enveloping algebra. 
The \emph{restricted universal enveloping algebra} of $\mathfrak g$ is the quotient algebra $u(\mathfrak g):=U(\mathfrak g)/\left(x^p-x^{[p]}, \, x\in \mathfrak g \right)$.
\end{deff} 

A version of the PBW theorem holds for $u(\mathfrak g)$: given a basis for $\mathfrak g$, the ordered monomials in this basis, where the exponential of each basis element is bounded by $p-1$, form a basis for $u(\mathfrak g)$. Consequently, if $\dim \mathfrak g=n$, then $\dim u(\mathfrak g)=p^n$. By setting $\Delta(x)=x\otimes 1+1\otimes x$, $S(x)=-x$ and $\epsilon(x)=0$, for every $x\in \mathfrak g\subset u(\mathfrak g)$, one sees that $u(\mathfrak g)$ becomes a finite-dimensional connected Hopf algebra over $\field$, which is a quantum $p$-group by Definition~\ref{quantum}. 

The Hochschild cohomology of pointed coalgebras was first used by Stefan and van Oystaeyen in \cite{SteVan} to classify complex pointed Hopf algebras of dimension $p^3$. Recently, the technique has been further developed by Wang-Zhang-Zhuang to study infinite-dimensional connected Hopf algebras over an algebraically closed field of characteristic zero \cite{DZZ}. Using the idea of \cite[Definition 1.2]{DZZ} and applying Lemma~\ref{L:HochUH}, we interpret the Hochschild cohomology ring of restricted universal enveloping algebras in terms of the cobar construction, which is defined as follows. 

\begin{deff}
Let $\mathfrak h$ be a restricted Lie algebra, and $u(\mathfrak h)$ be its restricted universal enveloping algebra. The \emph{cobar construction} on $u(\mathfrak h)$ is the differential graded algebra $\Omega\, u(\mathfrak h)$ such that:
\begin{itemize}
\item[(i)] As a graded algebra, $\Omega\, u(\mathfrak h)$ is the tensor algebra over the augmentation ideal of $u(\mathfrak h)$, i.e., $\Omega\, u(\mathfrak g):=\bigoplus_{i\ge 0} (u(\mathfrak h)^+)^{\otimes i}$.
\item[(ii)] The differentials of $\Omega\, u(\mathfrak h)$ are given by
\[
\partial^n=\sum_{i=0}^{n-1}(-1)^{i+1}\, 1^{\otimes i} \otimes \overline{\Delta}\otimes 1^{\otimes (n-i-1)},
\]
where $\overline{\Delta}(r)=\Delta(r)-r\otimes 1-1\otimes r$, for any $r\in u(\mathfrak h)^+$.
\end{itemize}
\end{deff}
Basic properties of bar and cobar constructions can be found in \cite[\S 19]{FHT}. The differentials $\partial^1$ and $\partial^2$ of $\Omega\, u(\mathfrak h)$ can be written explicitly in terms of $\Omega^1\, u(\mathfrak h)=u(\mathfrak h)^+$ and $\Omega^2\, u(\mathfrak h)=(u(\mathfrak h)^+)^{\otimes 2}$. For any $r,s\in u(\mathfrak h)^+$, we have 
\begin{align}\label{E:partials}
\begin{split}
\partial^1(r)&= 1\otimes r -\Delta(r) + r \otimes 1,\\
\partial^2(r\otimes s)&= 1\otimes r \otimes s -\Delta(r)\otimes s + r\otimes\Delta(s)- r\otimes s\otimes 1.
\end{split}
\end{align} 
We will apply Equations~(\ref{E:partials}) repeatedly throughout the paper.


\section{Primitive deformation theory}
\label{sec:PD theory}

In this section, we will introduce the concept of Primitive Deformation and study its basic properties. First, we will define abelian types $\T$ from an action of a certain abelian restricted Lie algebra onto another. 

\begin{deff}\label{D:ARRLie}
Let $\mathfrak h$ and $\mathfrak g$ be two restricted Lie algebras. An \emph{algebraic representation} of $\mathfrak g$ on $\mathfrak h$ is a linear map $\rho: \mathfrak g\to \End_\field(\mathfrak h)$ such that
\begin{itemize}
\item[(i)] $\rho_{[x,y]}=\rho_x\rho_y-\rho_y\rho_x$,
\item[(ii)] $\rho_{(x^{[p]})}=(\rho_x)^p$,
\item[(iii)] $\rho_x([a,b])=[\rho_x(a),b]+[a,\rho_x(b)]$,
\item[(iv)] $\rho_x(a^p)=\rho_x(a)(\ad \, a)^{p-1}$,
\end{itemize}
for any $x,y\in \mathfrak g$ and $a,b\in \mathfrak h$. Any linear map $\rho: \mathfrak g\to \End_\field(\mathfrak h)$ satisfying only (i) and (ii) is referred as a \emph{restricted Lie algebra representation} of $\mathfrak g$ on $\mathfrak h$; see \cite{Hoch54,Jac52}.
\end{deff}

Let $\mathfrak g,\mathfrak h$ be two restricted Lie algebras, and $\rho: \mathfrak g\to \End_\field(\mathfrak h)$ be a linear map. The \emph{semiproduct} of $\mathfrak h$ and $\mathfrak g$ via $\rho$, denoted by $\mathfrak h\rtimes_\rho \mathfrak g$, is defined as follows. As vector spaces, $\mathfrak h\rtimes_\rho \mathfrak g=\mathfrak h\oplus \mathfrak g$. Regarding the restricted Lie algebra structure on $\mathfrak h\rtimes_\rho \mathfrak g$, for any $a,b\in \mathfrak h$ and $x,y\in \mathfrak g$, we have 
\begin{itemize}
\item[(i)] the Lie bracket is given by $[a+x,b+y]:=[a,b]_\mathfrak h+\rho_x(b)-\rho_y(a)+[x,y]_\mathfrak g$;
\item[(ii)] the restricted map is given by $(a+x)^{[p]}=a^{[p]}+x^{[p]}+\sum_{i=1}^{p-1}s_i(a,x)$, where $is_i(a,x)$ is the coefficient of $\lambda^{i-1}$ in $a (\ad \, (\lambda a+x))^{p-1}$.
\end{itemize}

\begin{prop}\label{P:Type}
The semiproduct $\mathfrak h\rtimes_\rho \mathfrak g$ is a well-defined restricted Lie algebra if and only if $\rho$ is an algebraic representation of $\mathfrak g$ on $\mathfrak h$.
\end{prop}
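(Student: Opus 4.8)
The plan is to separate the two layers of structure on $\mathfrak h\rtimes_\rho\mathfrak g$ --- the bracket and the restricted $p$-map --- and to match the axioms of a restricted Lie algebra (Definition~\ref{D:RLie}) one by one against conditions (i)--(iv) of Definition~\ref{D:ARRLie}. Because every element is uniquely $a+x$ with $a\in\mathfrak h$, $x\in\mathfrak g$, and the relevant maps are (multi)linear, it suffices to test each identity on \emph{pure} elements lying entirely in $\mathfrak h$ or in $\mathfrak g$. For the bracket, antisymmetry of $[a+x,b+y]$ follows at once from antisymmetry of $[\cdot,\cdot]_{\mathfrak h}$ and $[\cdot,\cdot]_{\mathfrak g}$ together with the sign in the cross terms $\rho_x(b)-\rho_y(a)$, with no hypothesis on $\rho$. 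For the Jacobi identity I would run the four-case analysis on triples drawn from $\mathfrak h$ and $\mathfrak g$: the all-$\mathfrak h$ and all-$\mathfrak g$ cases are just Jacobi in $\mathfrak h$ and $\mathfrak g$; the two-$\mathfrak h$/one-$\mathfrak g$ case collapses, after expanding the brackets, exactly to the statement that each $\rho_x$ is a derivation of $\mathfrak h$, i.e.\ condition (iii); and the one-$\mathfrak h$/two-$\mathfrak g$ case collapses to $\rho_{[x,y]}=\rho_x\rho_y-\rho_y\rho_x$, i.e.\ condition (i). Read in both directions, this shows the bracket is a Lie bracket if and only if (i) and (iii) hold.

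The restricted map carries the real content. Axiom (i) of Definition~\ref{D:RLie} is automatic, since $s_i$ is homogeneous of total degree $p$ --- degree $i$ in its first argument and $p-i$ in its second --- so $s_i(\alpha a,\alpha x)=\alpha^p s_i(a,x)$, independently of $\rho$. The remaining axioms I would handle through the operator form $\ad(v^{[p]})=(\ad v)^p$ of axiom (iii). One first records how the adjoint action mixes the two factors: $\ad x$ acts on $\mathfrak h$ through $\pm\rho_x$, while $\ad a$ carries $\mathfrak g$ into $\mathfrak h$ by $y\mapsto\rho_y(a)$. Computing $(\ad x)^p$ on $\mathfrak h$ and $(\ad a)^p$ on $\mathfrak g$ then shows, on pure basis vectors, that the operator identity is equivalent in the $\mathfrak g$-direction to $\rho_{x^{[p]}}=(\rho_x)^p$ --- this is condition (ii), where $(-1)^p=-1$ in characteristic $p$ is exactly what reconciles the signs --- and in the $\mathfrak h$-direction to $\rho_x(a^{[p]})=\rho_x(a)(\ad a)^{p-1}$, which is condition (iv); the remaining pure directions merely reproduce axiom (iii) inside $\mathfrak h$ and $\mathfrak g$. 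This already yields the forward implication: if the semiproduct is restricted, specializing Jacobi and axiom (iii) to pure elements forces (i)--(iv).

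For the converse I would avoid expanding the $s_i$ by hand. Assuming (i)--(iv), the bracket is Lie by the first step, and the computation just described shows that $(\ad e)^p=\ad(e^{[p]})$ holds on every basis vector $e$ of $\mathfrak h\oplus\mathfrak g$, with the prescribed powers $a^{[p]}_{\mathfrak h}$ and $x^{[p]}_{\mathfrak g}$. Jacobson's existence-and-uniqueness theorem for $p$-operations --- a basis whose adjoint $p$-th powers are inner determines a unique compatible restricted structure, see \cite{Jac37,StradeFarn} --- then produces a genuine restricted structure $v\mapsto v^{\langle p\rangle}$ on $\mathfrak h\rtimes_\rho\mathfrak g$ taking these values on the basis. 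To identify it with the stated formula, I would use that the difference of any two $p$-operations on a fixed Lie algebra is additive and $p$-semilinear (the $s_i$-corrections cancel, as they depend only on the bracket) and hence is determined by its values on a basis; applying this on $\mathfrak h$ and on $\mathfrak g$ gives $v^{\langle p\rangle}=v^{[p]}_{\mathfrak h}$ for $v\in\mathfrak h$ and $v^{\langle p\rangle}=v^{[p]}_{\mathfrak g}$ for $v\in\mathfrak g$. Expanding $(a+x)^{\langle p\rangle}$ by axiom (ii) then recovers precisely $a^{[p]}_{\mathfrak h}+x^{[p]}_{\mathfrak g}+\sum_i s_i(a,x)$, so the given formula is exactly this restricted structure.

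The main obstacle is the restricted map, not the bracket. A head-on verification of axiom (ii), and of axiom (iii) for arbitrary arguments, would require manipulating the Jacobson--Zassenhaus polynomials $s_i(a+x,b+y)$ in full generality, which is forbidding. Routing the argument through the innerness of $(\ad e)^p$ on a basis, Jacobson's uniqueness theorem, and the $p$-semilinearity of the difference of two $p$-maps confines all the genuine content to the two pure-element identities (ii) and (iv), which is where the characteristic-$p$ subtleties actually live.
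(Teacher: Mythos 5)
Your proof is correct, and its forward implication coincides with the paper's: both extract conditions (i) and (iii) of Definition~\ref{D:ARRLie} from the Jacobi identity on mixed pure triples, and (ii) and (iv) from the restricted axiom $\ad(v^{[p]})=(\ad v)^p$ evaluated on pure pairs. The converse is where you genuinely diverge. The paper forms the universal enveloping algebra $U$ of the Lie algebra $\mathfrak h\rtimes_\rho\mathfrak g$, passes to the quotient $U/I$ with $I=\left(a^p-a^{[p]},\,x^p-x^{[p]}\right)$, uses Bergman's Diamond Lemma \cite{Beg} to see that $\mathfrak h\rtimes_\rho\mathfrak g$ embeds in $U/I$, and then reads off the restricted structure from Jacobson's associative identity $(a+x)^p=a^p+x^p+\sum_{i=1}^{p-1}s_i(a,x)$ in $U/I$ together with Example~\ref{E:RLA} (a subspace closed under commutators and $p$-th powers inherits a restricted structure). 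You instead stay entirely inside Lie theory: you check $(\ad e)^p=\ad(e^{[p]})$ on pure elements --- which, as you correctly isolate, is exactly where (ii) and (iv) enter, via the $\mathfrak h$-component of $(\ad x)^p$ and the $\mathfrak g$-component of $(\ad a)^p$ --- then invoke Jacobson's existence-and-uniqueness theorem for $p$-maps, and finally identify the resulting $p$-map with the prescribed formula using the fact that the difference of two $p$-maps is additive and $p$-semilinear. That last step is legitimate here because $\mathfrak h$ and $\mathfrak g$ are subalgebras of the semiproduct, so the $s_i$-corrections computed in either bracket agree and cancel; note the difference map need only be viewed as taking values in the semiproduct, so you do not need to know in advance that the new $p$-map preserves $\mathfrak h$ or $\mathfrak g$. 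Both routes are sound: the paper's buys the entire restricted structure at once from an ambient associative algebra, at the cost of the Diamond-Lemma/PBW verification that the embedding is injective; yours avoids associative algebras and any manipulation of the polynomials $s_i(a+x,b+y)$ in mixed arguments, at the cost of importing Jacobson's theorem (available in \cite{Jac37,StradeFarn}, both already in the paper's bibliography) plus the short semilinearity argument needed to upgrade agreement on a basis to agreement on all of $\mathfrak h$ and $\mathfrak g$.
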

\begin{proof}
We write $V$ for the semiproduct $\mathfrak h\rtimes_\rho \mathfrak g$. We first assume that $V$ is a well-defined restricted Lie algebra. For any $a,b\in \mathfrak h\subseteq V$, and $x,y\in \mathfrak g\subseteq V$, the Jacobi identity of the Lie bracket of $V$ implies that $[[x,y],a]=[x,[y,x]]-[y,[x,a]]$ and $[x,[a,b]]=[[x,a],b]+[a,[x,b]]$. By above definition of the Lie bracket of $V$, we get conditions (i) and (iii) in Definition \ref{D:ARRLie} for $\rho$. Moreover, by condition (iii) in Definition \ref{D:RLie} of a restricted Lie algebra, we have $[x^p,a]=(\ad \, x)^p (a)$ and $[x,a^p]=x (\ad \, a)^p=[x,a](\ad \, a)^{p-1}$. This implies conditions (ii) and (iv) in Definition \ref{D:ARRLie}. Hence, $\rho$ is an algebraic representation of $\mathfrak g$ on $\mathfrak h$.

Conversely, suppose $\rho$ is an algebraic representation of $\mathfrak g$ on $\mathfrak h$. Clearly, one sees that $V= \mathfrak h\rtimes_\rho \mathfrak g$ is a well-defined Lie algebra. We denote by $U$ its universal enveloping algebra. Consider the ideal $I$ in $U$ generated by $a^p-a^{[p]}$ and $x^p-x^{[p]}$ for every $a\in \mathfrak h$ and $x\in \mathfrak g$. It is an easy exercise, by Bergman's Diamond Lemma \cite{Beg}, to verify that $V$ can be embedded into the quotient algebra $U/I$. Moreover in $U/I$, by \cite[pp.~186--187]{Jac}, we have
$$(a+x)^p=a^p+x^p+\sum_{i=1}^{p-1}s_i(a,x)=a^{[p]}+x^{[p]}+\sum_{i=1}^{p-1}s_i(a,x),$$ 
where $is_i(a,x)$ is the coefficient of $\lambda^{i-1}$ in $a(\ad \, (\lambda a+x))^{p-1}$. Since each $s_i(a,x) \in V$, $V$ is invariant under the commutator and $p$-th power map of $U/I$, respectively. Moreover, the restricted Lie algebra structure defined on $V$ above is indeed induced correspondingly by the commutator and $p$-th power map of $U/I$. This shows that the restricted Lie algebra structure is well-defined on $V$; see Example \ref{E:RLA}. 
\end{proof}

\begin{deff}\label{D:Type}
An \emph{abelian type (of rank $n$)} $\T$ is a triple $(\mathfrak g,\mathfrak h,\rho)$ satisfying the following descriptions.
\begin{itemize}
\item[(i)] $\mathfrak g$ and $\mathfrak h$ are abelian restricted Lie algebras.
\item[(ii)] $\dim \mathfrak g=1$ and $\dim \mathfrak h=n$, for some integer $n\ge 1$.
\item[(iii)] $\rho$ is an algebraic representation of $\mathfrak g$ on $\mathfrak h$.
\end{itemize}
We call $n=\dim \mathfrak h$ the {\it rank} of $\T$. 
\end{deff}
As shown in \cite[Lemma 4.3]{XWang}, any abelian type $\T=(\mathfrak g,\mathfrak h,\rho)$ is associated with an abelian matched pair $(\rightharpoonup,\varrho)$ for Hopf algebra extensions of $u(\mathfrak h)$ by $u(\mathfrak g)$, see \cite[Definition 2.2]{AMa}. 

\begin{deff}\label{D:2TIsom}
Two abelian types $\T=(\mathfrak g,\mathfrak h,\rho)$ and $\T'=(\mathfrak g',\mathfrak h',\rho')$ are said to be \emph{isomorphic} if there exist restricted Lie algebra isomorphisms $\phi_1: \mathfrak g'\to \mathfrak g$ and $\phi_2: \mathfrak h\to \mathfrak h'$ such that the following diagram commutes:
\begin{align*}
\xymatrix{
\mathfrak g\otimes \mathfrak h\ar[rr]^-{\rho}\ar[d]_-{1\otimes \phi_2} &&\mathfrak h\ar[d]^-{\phi_2}\\
\mathfrak g\otimes \mathfrak h'\ar[r]_-{\phi_1^{-1}\otimes 1}&\mathfrak g'\otimes \mathfrak h'\ar[r]_-{\rho'}&\mathfrak h'.
}
\end{align*}
For simplicity, we will use $\phi$ to denote the isomorphism pair $(\phi_1,\phi_2)$, and we say that $\T$ is isomorphic to $\T'$ via $\phi$. 
\end{deff}

Let $\T=(\mathfrak g,\mathfrak h,\rho)$ be any abelian type of rank $n$. Recall that the restricted universal enveloping algebra of $\mathfrak h$, denoted by $u(\mathfrak h)$, is a commutative ($\mathfrak h$ is abelian) and cocommutative quantum $p$-group of dimension $p^{n}$. Next, we will discuss the cobar construction on $u(\mathfrak h)$, whose second cohomology group will be used for isomorphism classes in primitive deformation theory. 

In order to describe cohomology groups of $\Omega\, u(\mathfrak h)$, we will use the map $\Bock: \mathfrak h\to (u(\mathfrak h)^+)^{\otimes 2}$ defined as: 
\begin{align}\label{E:Bock}
\Bock(r)=\sum_{1\le i\le p-1}\frac{(p-1)!}{i!\,(p-i)!}\, \left(r^i\otimes r^{p-i}\right), 
\end{align}
for every $r\in \mathfrak h$. It is easy to check that $\Bock(r)$ is a $2$-cocycle in $\Omega\, u(\mathfrak h)$ for every $r\in \mathfrak h$.

\begin{lem}\label{L:HochUH}
For the cohomology ring, we have 
\[\HL^*\left(\Omega\, u(\mathfrak h)\right)\cong 
\begin{cases} S(\mathfrak h), & \text{if } p=2 \\
 \Lambda (\mathfrak h)\otimes S\left(\Bock(\mathfrak h)\right), & \text{if } p > 2
 \end{cases} 
\]
where $\Lambda$ and $S$ are the exterior and polynomial algebra functors, respectively. 
\end{lem}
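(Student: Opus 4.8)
The plan is to compute the cohomology of the cobar construction $\Omega\, u(\mathfrak h)$ by reducing it to a known computation. Since $\mathfrak h$ is an abelian restricted Lie algebra of dimension $n$, the Hopf algebra $u(\mathfrak h)$ decomposes as a tensor product $u(\mathfrak h)\cong \bigotimes_{i=1}^n u(\field h_i)$ of $n$ copies of restricted enveloping algebras of one-dimensional restricted Lie algebras, where $\{h_1,\dots,h_n\}$ is a basis of $\mathfrak h$. Each factor $u(\field h_i)$ is isomorphic, as a Hopf algebra, to $\field[x]/(x^p)$ with $x$ primitive (after absorbing the restriction $h_i^{[p]}$, which either vanishes or is semisimple and can be split off; the truncated-polynomial structure is what controls the cohomology). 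The cobar construction computes $\Ext^*_{u(\mathfrak h)}(\field,\field)$, the cohomology of the Hopf algebra with trivial coefficients, so the first reduction I would make is to identify $\HL^*(\Omega\, u(\mathfrak h))$ with $\Ext^*_{u(\mathfrak h)}(\field,\field)$ and then apply a K\"unneth formula to reduce to the rank-one case.

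First I would settle the rank-one computation. For $A=\field[x]/(x^p)$ with $x$ primitive, the cohomology $\Ext^*_A(\field,\field)$ is classically known: when $p=2$ it is the polynomial algebra $\field[x]$ on a single degree-one generator (matching $S(\mathfrak h)$), while for $p>2$ it is $\Lambda(y)\otimes \field[z]$ with $y$ in degree one and $z$ in degree two. I would exhibit explicit cocycle representatives directly in the cobar complex: the class of $y$ is represented by $r\in u(\mathfrak h)^+=\Omega^1$ itself (which is a cocycle since $r$ is primitive, so $\partial^1(r)=0$ by Equation~(\ref{E:partials})), and the degree-two class is represented precisely by $\Bock(r)$, whose cocycle property is already noted after Equation~(\ref{E:Bock}). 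This is why the statement is phrased in terms of $\Lambda(\mathfrak h)\otimes S(\Bock(\mathfrak h))$: the exterior generators are the primitives themselves and the polynomial generators are their Bockstein images.

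Next I would assemble the rank-$n$ answer from the rank-one case via the K\"unneth/Eilenberg--Zilber machinery. Since the cobar construction is a model for $\Ext$ and $\Ext^*_{A\otimes B}(\field,\field)\cong \Ext^*_A(\field,\field)\otimes \Ext^*_B(\field,\field)$ (all graded pieces finite-dimensional, base field, so no Tor terms), the tensor decomposition of $u(\mathfrak h)$ yields $\HL^*(\Omega\, u(\mathfrak h))\cong \bigotimes_{i=1}^n \HL^*(\Omega\, u(\field h_i))$. For $p=2$ this is $\bigotimes_i \field[x_i]=S(\mathfrak h)$; for $p>2$ it is $\bigotimes_i \bigl(\Lambda(y_i)\otimes \field[z_i]\bigr)=\Lambda(\mathfrak h)\otimes S(\Bock(\mathfrak h))$, using that the $y_i$ correspond to a basis of $\mathfrak h$ and the $z_i$ to the corresponding basis $\Bock(h_i)$ of $\Bock(\mathfrak h)$. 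I would take care to check that the isomorphism is one of algebras (not merely graded vector spaces), so that the exterior-times-polynomial ring structure is genuinely reproduced and the apparent dependence on a choice of basis is seen to be an intrinsic functorial statement in $\mathfrak h$.

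The main obstacle I anticipate is making the reduction to $\Ext$ and the K\"unneth step fully rigorous at the cochain level, since the cobar construction is the appropriate model only up to a comparison with the standard (normalized) bar/cobar resolution computing $\Ext_{u(\mathfrak h)}(\field,\field)$; the excerpt signals that this comparison is handled by the forthcoming Lemma~\ref{L:HochUH} interpretation, so I would invoke that identification rather than reprove it. A secondary technical point, present only when $p>2$, is verifying the exterior (rather than polynomial) behavior of the degree-one generators, i.e.\ that the square of each $y_i$ in the cohomology ring vanishes and that it is $\Bock(h_i)$, not $y_i^2$, that survives as the polynomial generator; this is exactly the odd-primary phenomenon that distinguishes the two cases in the statement, and it is where the precise form of the map $\Bock$ in Equation~(\ref{E:Bock}) does the essential work.
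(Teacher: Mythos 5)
Your first reduction is where the argument breaks. The cobar construction $\Omega\, u(\mathfrak h)$ is built purely from the \emph{coalgebra} structure of $u(\mathfrak h)$, and its cohomology is $\operatorname{Cotor}_{u(\mathfrak h)}(\field,\field)$, which for a finite-dimensional coalgebra is $\Ext^*_{u(\mathfrak h)^*}(\field,\field)$ --- Ext over the \emph{dual} algebra --- not $\Ext^*_{u(\mathfrak h)}(\field,\field)$ over $u(\mathfrak h)$ itself. These two rings genuinely differ here, and the difference is fatal: the lemma is applied to all abelian restricted Lie algebras $\mathfrak h$, including tori (type $D$ in Table~\ref{tab:rank 2}, $x^{[p]}=x$, $y^{[p]}=y$). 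For a torus, $u(\mathfrak h)\cong\field^{C_p^n}$ is semisimple, so $\Ext^*_{u(\mathfrak h)}(\field,\field)$ is just $\field$ concentrated in degree $0$, whereas the lemma (correctly) asserts that the cobar cohomology is the nontrivial ring $\Lambda(\mathfrak h)\otimes S\left(\Bock(\mathfrak h)\right)$. The same problem infects your rank-one step: $u(\field h_i)$ is \emph{not} isomorphic as a Hopf algebra (or even as an algebra) to $\field[x]/(x^p)$ when $h_i^{[p]}\neq 0$; for instance $h_i^{[p]}=h_i$ gives $\field[x]/(x^p-x)$, a product of $p$ copies of $\field$, which is semisimple. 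So the restriction cannot be ``absorbed'' or ``split off'' if you are computing Ext over the algebra: it changes the algebra, and hence the module-theoretic Ext, completely.

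What is true --- and what the paper's proof exploits --- is the dual statement: the \emph{coalgebra} structure of $u(\mathfrak h)$ (the binomial coproduct on the PBW basis) is independent of the restricted map, so the dual algebra $A=u(\mathfrak h)^*$ is always isomorphic to the group algebra $\field[C_p^n]$, and the paper identifies the cobar complex with the reduced complex computing $\HL^*(A,\field)\cong\HL^*(C_p^n,\field)$, the classical cohomology of an elementary abelian $p$-group. Your overall architecture (rank one plus K\"unneth) would survive this correction if you systematically replace Ext over $u(\mathfrak h)$ by Cotor over the coalgebra $u(\mathfrak h)$, equivalently Ext over $\field[C_p]$ factorwise: the coalgebra $u(\mathfrak h)$ does split as an $n$-fold tensor product of copies of the rank-one coalgebra, the rank-one answer is the classical $\field[y]$ for $p=2$ and $\Lambda(y)\otimes\field[z]$ for $p>2$, and your cocycle representatives $r\in\mathfrak h$ in degree one and $\Bock(r)$ in degree two are exactly right. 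But as written, the proof computes the wrong cohomology theory and returns a false answer for every type whose $\mathfrak h$ has nontrivial restriction, in particular for all tori.
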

\begin{proof}
Let $A$ be the vector space dual of $u(\mathfrak h)$. The coalgebra structure of $u(\mathfrak h)$ equips $A$ with an algebra structure together with an augmentation by evaluating at the unit of $u(\mathfrak h)$. By above discussion, $A \cong \field[C_p^n]$, a $p$-group algebra. Clearly, there is an isomorphism of complexes between the cobar construction on $u(\mathfrak h)$ and the following reduced complex
\begin{align*}
\xymatrix{
\k \ar[r]^-{0} & \Hom_\field (A^+,\field) \ar[r]^-{\delta^1} & \Hom_\field((A^+)^{\otimes 2},\field)  \ar[r]^-{\delta^2} &\Hom_\field((A^+)^{\otimes 3},\field)\ar[r]^-{\delta^3}&\cdots.
}
\end{align*}
The differentials are given by $\delta^{j}(f) := \sum_{i=0}^{j-1}(-1)^{i+1}f\circ(1^{\otimes i} \otimes m \otimes 1^{\otimes {(j-i-1)}})$, where $m$ is the multiplication map of $A$ restricted to $A^+$ and $f: (A^+)^{\otimes j}\to \field$. As stated in \cite[Section 2.4]{MPSW}, the above complex calculates the Hochschild cohomology $\HL^*(A,\field)$, whose cup product is compatible with the product of the cohomology ring of $\Omega\,u(\mathfrak h)$ induced by the tensor product. Hence, $\HL^*\left(\Omega\, u(\mathfrak h)\right)\cong \HL^*(A,\field)\cong \HL^*(C_p^n,\field)$ as cohomology rings. It is easy to check that $\HL^1\left(\Omega\, u(\mathfrak h)\right)\cong \mathfrak h$. By \cite[Proposition 6.2]{wang2012connected}, we have $\HL^2\left(\Omega\, u(\mathfrak h)\right)\cong \Lambda^2(\mathfrak h)\oplus \Bock(\mathfrak h)$. Note that $  \Lambda^2(\mathfrak h) \oplus \Bock(\mathfrak h)=\mathfrak h \otimes \mathfrak h$ when $p=2$. Then the result follows from the well-known formula of the group cohomology for $C_p^n$; see \cite[Section 4]{DQuillen}.
\end{proof}

According to \cite[Definition 1.1 \& Proposition 1.4]{XWang}, for any abelian type $\T=(\mathfrak g,\mathfrak h,\rho)$, the algebraic representation $\rho$ gives a Hopf $u(\mathfrak g)$-module structure on $u(\mathfrak h)$. Moreover, the action of $\rho$ on $\mathfrak h$ can be extended to the cobar construction $\Omega\, u(\mathfrak h)$ by derivation \cite[Proposition 1.5]{XWang}. Since $\mathfrak g$ is 1-dimensional, we can specify $\rho$ by $\rho_z$, for any choice of nonzero $z\in \mathfrak g$. In the following, we still use $\rho$ to denote the map $\rho: \mathfrak g\to \Der(\Omega\, u(\mathfrak h))$ of restricted Lie algebras, given by $z \mapsto \rho_z$.

\begin{deff}\label{D:TypeChar}
Let $\T=(\mathfrak g,\mathfrak h,\rho)$ be any abelian type. Define $\Phi: \mathfrak g\to \End_{\mathbb F_p}(\Omega\, u(\mathfrak h))$ to be the $\mathbb F_p$-linear map from $\mathfrak g$ to the endomorphisms of $\Omega\, u(\mathfrak h)$ via 
\[
\Phi_z(\pi)=\pi^p-\lambda\, \pi+\rho_z^{p-1}(\pi),
\] 
preserving the grading, where $z\in \mathfrak g$ satisfying $z^{[p]}=\lambda z$, for some $\lambda \in \field$, and $\pi\in \Omega^*\, u(\mathfrak h)$ is a homogeneous element with $\pi^p$ denoting its $p$-th power in $\Omega^*\, u(\mathfrak h)$.
\end{deff}
In lower degrees of $\Omega\, u(\mathfrak h)$, $\Phi$ is explicitly given by:  
\begin{align} \label{E:phi}
\begin{split}
\Phi_z(r)&=r^p-\lambda\, r+\rho_z^{p-1}(r); \\
\Phi_z(r\otimes s)&=r^p\otimes s^p-\lambda\ (r\otimes s)+(\rho_z\otimes 1+1\otimes \rho_z)^{p-1}(r\otimes s)\\ 
&=r^p\otimes s^p-\lambda\ (r\otimes s)+\sum_{0\le i\le p-1}{p-1\choose i}\ \rho_z^{i}(r)\otimes \rho_z^{p-1-i}(s),
\end{split}
\end{align}
for any $r\in \Omega^1\, u(\mathfrak h)=u(\mathfrak h)^+$ and $r\otimes s\in \Omega^2\, u(\mathfrak h)=(u(\mathfrak h)^+)^{\otimes 2}$. Here, the first equality of $\Phi_z(r\otimes s)$ is based on the definition of $\Phi_z$ and the fact that $\rho_z$ is a Lie algebra map and hence preserves the coproduct structure of $u(\mathfrak h)$. We will apply Equations~(\ref{E:phi}) repeatedly throughout the paper. 

\begin{prop}\label{P:Cobar}
Let $\T=(\mathfrak g,\mathfrak h,\rho)$ be any abelian type, and $z\in \mathfrak g$. In $\Omega\, u(\mathfrak h)$: 
\begin{itemize} 
\item[(i)] $\rho_z$ and $\Phi_z$ commute with the differentials of $\Omega\, u(\mathfrak h)$;
\item[(ii)] $\rho_z\circ\Phi_z=0$;
\item[(iii)] $\displaystyle \rho_z^{i+1}(\Bock(r))=\partial^1 \left( -\sum_{i_1+\ldots+i_p=i}\ \frac{i!}{i_1!\cdots i_p!} \ \rho_z^{i_1}(r)\cdots \rho_z^{i_{p-1}}(r)\rho_z^{1+i_p}(r) \right)$, for $i\ge 0$;
\item[(iv)] $\Bock(\alpha\, r)=\alpha^p\ \Bock(r)$, for any $\alpha\in \field$;
\item[(v)] $\displaystyle \Bock(r)+\Bock(s)-\Bock(r+s)=\partial^1\left (\sum_{i=1}^{p-1} \frac{(p-1)!}{i!(p-i)!} \ r^is^{p-i}\right)$, for any $r,s \in \mathfrak h$.
\end{itemize}
\end{prop}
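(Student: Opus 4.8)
The plan is to prove the five statements in the order (iv), (i), (ii), (v), (iii), since (iii) will rest on both part (i) and the $i=0$ instance of its own formula. Part (iv) is immediate: substituting $\alpha r$ into \eqref{E:Bock} and using $(\alpha r)^i\otimes(\alpha r)^{p-i}=\alpha^p\,(r^i\otimes r^{p-i})$ gives $\Bock(\alpha r)=\alpha^p\Bock(r)$. For part (i) I would first isolate the two structural facts on which everything depends. Since $z$ is primitive and $u(\mathfrak h)$ is a module coalgebra, $\rho_z$ preserves the coproduct, i.e.\ $\overline{\Delta}\circ\rho_z=(\rho_z\otimes 1+1\otimes\rho_z)\circ\overline{\Delta}$; because on each $\Omega^k$ the operator $\rho_z$ acts factorwise as a sum of such derivations, it commutes with every differential $\partial^n$. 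For $\Phi_z$ the only additional point is that the factorwise $p$-th power $\pi\mapsto\pi^p$ commutes with $\partial$, and this I would extract from the single identity $\overline{\Delta}(x^p)=\overline{\Delta}(x)^p$: as $\Delta$ is an algebra map it commutes with the Frobenius of the commutative algebra $u(\mathfrak h)$, and in characteristic $p$ that Frobenius is additive, so $\overline{\Delta}(x^p)=\Delta(x)^p-(x\otimes 1)^p-(1\otimes x)^p=(\Delta(x)-x\otimes 1-1\otimes x)^p=\overline{\Delta}(x)^p$, which propagates factorwise to all of $\Omega^*$. Combining this with the coderivation property, the triviality of the scalar term $-\lambda\pi$, and the explicit shapes in \eqref{E:phi} yields $\partial\circ\Phi_z=\Phi_z\circ\partial$.

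Part (ii) is then a short computation. Applying $\rho_z$ to $\Phi_z(\pi)=\pi^p-\lambda\pi+\rho_z^{p-1}(\pi)$, the first term vanishes because $\rho_z$ is a derivation of the commutative factorwise product, so $\rho_z(\pi^p)=p\,\pi^{p-1}\rho_z(\pi)=0$ in characteristic $p$. It remains to see $\rho_z^p=\lambda\,\rho_z$ on each $\Omega^k$; on a single factor this is exactly the restricted-representation condition in Definition~\ref{D:ARRLie}(ii), namely $\rho_z^p=\rho_{z^{[p]}}=\rho_{\lambda z}=\lambda\,\rho_z$, and the freshman's dream applied to the commuting factorwise sum carries it to all of $\Omega^k$. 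Hence $\rho_z\Phi_z(\pi)=-\lambda\,\rho_z(\pi)+\rho_z^p(\pi)=0$.

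For (v) I would expand $\partial^1(w)=\overline{\Delta}(w)$ for $w=\sum_{i=1}^{p-1}\frac{(p-1)!}{i!(p-i)!}\,r^is^{p-i}$, computing $\Delta(r^is^{p-i})=(r\otimes 1+1\otimes r)^i(s\otimes 1+1\otimes s)^{p-i}$ from the primitivity of $r$ and $s$, and then match coefficients against $\Bock(r)+\Bock(s)-\Bock(r+s)$; the bookkeeping is purely combinatorial, organized by the identity $\frac{(p-1)!}{i!(p-i)!}=\frac1p\binom{p}{i}$. Finally, for (iii) I would reduce everything to the case $i=0$. Writing $c_i=\frac{(p-1)!}{i!(p-i)!}$ and using $c_i\cdot i=\binom{p-1}{i-1}$, $c_i\cdot(p-i)=\binom{p-1}{i}$, a direct expansion of $\rho_z(\Bock(r))$ (with $\rho_z$ acting by the Leibniz rule on each factor) collapses, after an index shift, to $\overline{\Delta}\big(r^{p-1}\rho_z(r)\big)=\partial^1\big(-r^{p-1}\rho_z(r)\big)$, which is precisely the $i=0$ instance. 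For general $i$ I would observe that the summand on the right-hand side of (iii) is exactly $\rho_z^i$ applied to $-r^{p-1}\rho_z(r)$: the generalized Leibniz rule distributes $i$ copies of $\rho_z$ over the $p$ factors $r,\dots,r,\rho_z(r)$ with weight $\tfrac{i!}{i_1!\cdots i_p!}$, turning the final factor into $\rho_z^{1+i_p}(r)$. Pulling $\rho_z^i$ through $\partial^1$ by part (i) then gives $\partial^1\big(-\sum \tfrac{i!}{i_1!\cdots i_p!}\rho_z^{i_1}(r)\cdots\rho_z^{i_{p-1}}(r)\rho_z^{1+i_p}(r)\big)=\rho_z^i\,\partial^1\big(-r^{p-1}\rho_z(r)\big)=\rho_z^{i+1}(\Bock(r))$.

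I expect the main obstacle to be the $\Phi_z$ half of part (i), not because it is long but because it hinges on correctly interpreting the ``$p$-th power in $\Omega^*\,u(\mathfrak h)$'' as the factorwise Frobenius of the commutative algebras $(u(\mathfrak h)^+)^{\otimes k}$ and on the compatibility $\overline{\Delta}(x^p)=\overline{\Delta}(x)^p$; once this single identity is in hand, parts (i) and (ii) follow cleanly. The remaining effort is concentrated in the combinatorial matching of (v) and the $i=0$ expansion of (iii), which are calculation-heavy but routine, since the multinomial reduction of (iii) to its base case is formal.
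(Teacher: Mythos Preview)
Your proposal is correct and follows essentially the same path the paper takes: the paper defers (i), (ii), and the base case of (iii) to \cite{XWang} and then says ``an easy induction yields the general formula,'' while (iv) is declared clear and (v) ``a direct computation.'' Your self-contained arguments are exactly the natural ones behind those citations: the coderivation property of $\rho_z$ and the Frobenius compatibility $\overline{\Delta}(x^p)=\overline{\Delta}(x)^p$ for (i), the derivation-kills-$p$th-powers observation together with $\rho_z^p=\lambda\rho_z$ for (ii), and for (iii) the reduction to the base case $\rho_z(\Bock(r))=\partial^1(-r^{p-1}\rho_z(r))$ followed by applying $\rho_z^i$ via Leibniz and commuting through $\partial^1$ is precisely the ``easy induction'' the paper alludes to. One small slip: in your sketch of (v) you write $\partial^1(w)=\overline{\Delta}(w)$, but from \eqref{E:partials} one has $\partial^1(w)=-\overline{\Delta}(w)$; this sign is harmless for the coefficient-matching argument and you handle it correctly in (iii).
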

\begin{proof}
(i) follows from \cite[Propositions 1.5 \& 2.3 (a)]{XWang}. (ii) follows from \cite[Proposition 2.3 (c)]{XWang}. 
(iii) The special case $\rho_z\left(\Bock(r)\right)=\partial^1\left(-r^{p-1}\rho_z(r)\right)$ is exactly \cite[Lemma 6.11]{XWang}. An easy induction yields the general formula. (iv) is clear and (v) follows from a direct computation. 
\end{proof}

By Lemma \ref{L:HochUH}, we know $\HL^1\left(\Omega\, u(\mathfrak h)\right)\cong \mathfrak h$ and $\HL^2\left(\Omega\, u(\mathfrak h)\right)\cong \Lambda^2(\mathfrak h)\oplus \Bock(\mathfrak h)$. Hence, $\Bock$ can be viewed as a map from $\HL^1\left(\Omega\, u(\mathfrak h)\right)$ to $\HL^2\left(\Omega u(\mathfrak h)\right)$ via $\Bock: [r]\mapsto [\Bock(r)]$ for every $r\in \mathfrak h$. Also by Proposition \ref{P:Cobar}(i), $\Phi_z$ induces an $\mathbb F_p$-linear map on $\HL^*(\Omega\, u(\mathfrak h))$ for any $z\in \mathfrak g$.

\begin{cor}\label{C:SLinear}
Let $\T=(\mathfrak g,\mathfrak h,\rho)$ be any abelian type. In the cohomology ring of $\Omega\, u(\mathfrak h)$, we have
\begin{itemize}
\item[(i)] $\Bock$ is a semilinear map \emph{w.r.t.} to the Frobenius map of $\field$.
\item[(ii)] For any $z\in \mathfrak g$, $\Bock(\mathfrak h)$ is $\Phi_z$-invariant in $\HL^2\left(\Omega\,u(\mathfrak h)\right)$. When $p>2$, $\Lambda^2(\mathfrak h)$ is also $\Phi_z$-invariant in $\HL^2\left(\Omega\,u(\mathfrak h)\right)$.
\end{itemize}
\end{cor}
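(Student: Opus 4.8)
The plan is to read both statements off the explicit formulas~(\ref{E:Bock}) and~(\ref{E:phi}), using Proposition~\ref{P:Cobar} together with the decomposition $\HL^2\left(\Omega\,u(\mathfrak h)\right)\cong \Lambda^2(\mathfrak h)\oplus \Bock(\mathfrak h)$ from Lemma~\ref{L:HochUH}. Throughout I fix $z\in\mathfrak g$ with $z^{[p]}=\lambda z$ and work with the induced map $\Bock\colon \HL^1\to\HL^2$, $[r]\mapsto[\Bock(r)]$, and the induced endomorphism $\Phi_z$ of $\HL^*\left(\Omega\,u(\mathfrak h)\right)$.

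For (i), I would show that $\Bock$ is additive and Frobenius-homogeneous on cohomology. Homogeneity $\Bock(\alpha r)=\alpha^p\Bock(r)$ is exactly Proposition~\ref{P:Cobar}(iv), and additivity holds in $\HL^2$ because, by Proposition~\ref{P:Cobar}(v), the defect $\Bock(r)+\Bock(s)-\Bock(r+s)$ is a coboundary $\partial^1(\cdots)$ and so vanishes in cohomology. Combining the two yields $\Bock(\alpha r+\beta s)=\alpha^p\Bock(r)+\beta^p\Bock(s)$ in $\HL^2$, i.e.\ semilinearity with respect to $\alpha\mapsto\alpha^p$. I would also record here that, since $\field$ is perfect, $\Bock(\mathfrak h)$ is then an honest $\field$-subspace of $\HL^2$, a fact needed in (ii).

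For (ii), I would treat the two summands of $\HL^2$ separately by evaluating $\Phi_z$ via~(\ref{E:phi}). On a class $\Bock(r)$, the three terms of $\Phi_z(\Bock(r))=\Bock(r)^p-\lambda\,\Bock(r)+\rho_z^{p-1}(\Bock(r))$ behave as follows. The $p$-th power collapses by a freshman's-dream computation in the commutative algebra $u(\mathfrak h)^{\otimes 2}$: since each coefficient $\tfrac{(p-1)!}{i!(p-i)!}$ lies in $\mathbb{F}_p$ and $r^p=r^{[p]}$ in $u(\mathfrak h)$, one gets $\Bock(r)^p=\Bock\!\left(r^{[p]}\right)\in\Bock(\mathfrak h)$; the middle term $-\lambda\,\Bock(r)$ lies in the $\field$-subspace $\Bock(\mathfrak h)$ by (i); and $\rho_z^{p-1}(\Bock(r))$ is a coboundary by Proposition~\ref{P:Cobar}(iii) with $i=p-2\ge 0$, hence vanishes in $\HL^2$. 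Thus $\Phi_z(\Bock(r))=\Bock\!\left(r^{[p]}\right)-\lambda\,\Bock(r)\in\Bock(\mathfrak h)$ for all $p$. For $\Lambda^2(\mathfrak h)$ with $p>2$, I would represent a class by $r\otimes s$ with $r,s\in\mathfrak h$ (its symmetric part dies by graded-commutativity, so $[r\otimes s]=r\wedge s$) and observe that every term of $\Phi_z(r\otimes s)=r^p\otimes s^p-\lambda\,(r\otimes s)+\sum_i\binom{p-1}{i}\rho_z^i(r)\otimes\rho_z^{p-1-i}(s)$ is a sum of pure tensors $a\otimes b$ with $a,b\in\mathfrak h$, using $r^p=r^{[p]}\in\mathfrak h$ and that $\rho_z$ preserves $\mathfrak h$; each such pure tensor is a $2$-cocycle whose class lies in $\Lambda^2(\mathfrak h)$, so $\Phi_z(r\otimes s)\in\Lambda^2(\mathfrak h)$.

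The step I expect to be the crux is controlling the $p$-th power term of $\Phi_z$ and confirming that it stays inside the correct summand; this is exactly where the characteristic-$p$ structure is essential, through the integrality $\tfrac{(p-1)!}{i!(p-i)!}\in\mathbb{F}_p$, Fermat's identity $c^p=c$, and $r^p=r^{[p]}$, which together produce the clean collapse $\Bock(r)^p=\Bock(r^{[p]})$. A secondary point is to keep the bookkeeping consistent with the grading-preserving (componentwise) $p$-th power of Definition~\ref{D:TypeChar}, so that $\Phi_z$ genuinely restricts to an endomorphism of $\HL^2$ on each summand.
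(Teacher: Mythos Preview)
Your proposal is correct and follows essentially the same route as the paper's proof: part~(i) is read off Proposition~\ref{P:Cobar}(iv)--(v), and part~(ii) evaluates $\Phi_z$ termwise via~(\ref{E:phi}), using Proposition~\ref{P:Cobar}(iii) to kill $\rho_z^{p-1}(\Bock(r))$ in cohomology and the identity $r^p=r^{[p]}\in\mathfrak h$ to handle the $p$-th power terms. The only cosmetic difference is that the paper goes one step further and combines your two surviving terms $\Bock(r^{[p]})-\lambda\,\Bock(r)$ into the single class $[\Bock(r^p-\lambda^{1/p}r)]$ by invoking the semilinearity of~(i); your version, which instead notes that $\Bock(\mathfrak h)$ is a $\field$-subspace because $\field$ is perfect, is equally valid.
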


\begin{proof}
(i) is directly from Proposition \ref{P:Cobar} (iv) and (v). (ii) By Proposition \ref{P:Cobar} (iii)-(v), for any $r \in \mathfrak h$:
\begin{align*}
&\ \Phi_z\left[\Bock(r)\right]\\
=&\ \left[\Bock(r)^p-\lambda\, \Bock(r)+\rho_z^{p-1}\left(\Bock(r)\right)\right]\\
=&\ \left[\Bock(r)^p-\lambda\, \Bock(r)\right]-\left[\partial^1\left(\sum_{i_1+\ldots+i_p=p-2}\frac{i!}{i_1!\cdots i_p!}\ \rho_z^{i_1}(r)\cdots \rho_z^{i_{p-1}}(r)\rho_z^{1+i_p}(r)\right)\right]\\
=&\ \left[\Bock(r^p-\lambda^{1/p}\, r)\right].
\end{align*}
Thus, $\Bock(\mathfrak h)$ is $\Phi_z$-invariant in $\HL^2\left(\Omega\,u(\mathfrak h)\right)$. When $p>2$, observe that for any $r \in \mathfrak h$, $[r \otimes r] = [-\partial^1(\frac{1}{2}r^2)]=0$ in $\HL^2\left(\Omega\, u(\mathfrak h)\right)$ and $\Lambda^2(\mathfrak h)$ does not coincide with $\Bock(\mathfrak h)$. By~Equation~(\ref{E:phi}), for any $r,s\in \mathfrak h$:
\[
\Phi_z[r\otimes s]=\left[r^p\otimes s^p-\lambda\, (r\otimes s)+\sum_{i=0}^{p-1}{p-1\choose i}\ \rho_z^i(r)\otimes \rho_z^{p-1-i}(s)\right]\in \Lambda^2(\mathfrak h).
\]
Hence, $\Lambda^2(\mathfrak h)$ is $\Phi_z$-invariant. This completes the proof.
\end{proof}


In the following, we will introduce the concept called Primitive Deformation to formalize the construction of quantum $p$-groups in $\mathscr X(n)$. Let $\T=(\mathfrak g,\mathfrak h,\rho)$ be any abelian type of rank $n$. By Proposition \ref{P:Type}, we can construct a restricted Lie algebra $\mathfrak h\rtimes_\rho \mathfrak g$ by semiproduct, whose restricted universal enveloping algebra is denoted by $u(\T)$. Slightly abusing notation, we still use $\mathfrak h$ and $\mathfrak g$ for the generating space of $u(\T)$. Since $\mathfrak g$ is one dimensional, we can choose any $0\neq z\in \mathfrak g$ as a basis element. Then $u(\T)$ is isomorphic to the quotient algebra of the free $u(\mathfrak h)$-algebra, $u(\mathfrak h)\langle z\rangle$, subject to relations
\begin{align*}
[z,r]=\rho_z(r),\ z^p-z^{[p]}=0, 
\end{align*}
for any $r\in u(\mathfrak h)$, and $z^{[p]}$ denotes the restricted map of $\mathfrak g$ on $z$. One sees that $u(\T)$ is a quantum $p$-group of dimension $p^{n+1}$, where the coalgebra structure is given by:
\begin{gather*}
\Delta(x)=x\otimes 1+1\otimes x,\ \Delta(z)=z\otimes 1+1\otimes z,\\
\epsilon(x)=\epsilon(z)=0,
\end{gather*}
together with antipode $S$ given by $S(x)=-x$ and $S(z)=-z$, for any $x\in \mathfrak h$ and $z\in \mathfrak g$. Note that the primitive space of $u(\T)$ is $(n+1)$-dimensional spanned by $\mathfrak h$ and $\mathfrak g$. Based on $u(\T)$, we can construct quantum $p$-groups in $\mathscr X(n)$.

\begin{deff}\label{D:FirstPCD}
Let $\T=(\mathfrak g,\mathfrak h,\rho)$ be an abelian type of rank $n$, and $\Theta\in u(\mathfrak h)^+$ and $\chi\in (u(\mathfrak h)^+)^{\otimes 2}$ be two elements. We refer to the triple  $(\T,\Theta,\chi)$ as a datum and write $\mathscr D:=(\T,\Theta,\chi)$. For any $0\neq z\in \mathfrak g$, we can deform the algebra and coalgebra structures of $u(\T)$ regarding the element $z$ such that 
\[
z^p-z^{[p]}+\Theta=0\ \text{and}\ \Delta(z)=z\otimes 1+1\otimes z+\chi.
\]
We denote $u_z(\mathscr D)$ the correspondingly deformed quotient algebra of $u(\T)$ by above relations. Moreover, we say $u_z(\mathscr D)$ is a \emph{primitive deformation} (PD) of $u(\T)$ if it satisfies the following conditions:
\begin{itemize}
\item[(i)] $u_z(\mathscr D)$ is a quantum $p$-group of dimension $p^{n+1}$.
\item[(ii)] The primitive space of $u_z(\mathscr D)$ is isomorphic to $\mathfrak h$ of dimension $n$.
\end{itemize}
In such a case, $\mathscr D$ is said to be a \emph{PD datum} with respect to $z$.
\end{deff}

It directly follows from the definition that if $u_z(\mathscr D)$ is a PD of $u(\T)$ for some PD datum $\mathscr D=(\T,\Theta,\chi)$, then $u_z(\mathscr D)$ is a quantum $p$-group in $\mathscr X(n)$. On the other hand, any quantum $p$-group in $\mathscr X(n)$ can be realized as some PD of $u(\T)$ for some abelian type $\T$ of rank $n$ by \cite[Corollary 4.6]{XWang}. 

\begin{prop}\label{P:RCdata}
Let $\T=(\mathfrak g,\mathfrak h,\rho)$ be an abelian type, and $\mathscr D=(\T,\Theta,\chi)$ be a datum. Then $\mathscr D$ is a PD datum with respect to some $0\neq z\in\mathfrak g$ if and only if it satisfies the following conditions:
\begin{itemize}
\item[(i)] $\chi$ is a $2$-cocycle but not a $2$-coboundary in $\Omega\, u(\mathfrak h)$, that is $\partial^2(\chi)=0$ and $\chi\neq \partial^1(s)$ for any $s\in u(\mathfrak h)^+$.
\item[(ii)] $\rho_z(\Theta)=0$.
\item[(iii)] $\Phi_z(\chi)=\partial^1(\Theta)$.
\end{itemize}
\end{prop}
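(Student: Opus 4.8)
The plan is to prove both directions at once by showing that each of the three conditions is \emph{equivalent} to one piece of the definition of a primitive deformation, after splitting the requirements of Definition~\ref{D:FirstPCD} into: (a) $u_z(\mathscr D)$ is a well-defined algebra of dimension $p^{n+1}$; (b) the deformed $\Delta,\epsilon$ make it a bialgebra; (c) it is connected, hence a Hopf algebra; and (d) its primitive space is exactly $\mathfrak h$. For step (a) I would regard $u_z(\mathscr D)$ as the quotient of the Ore extension $u(\mathfrak h)[z;\rho_z]$ (legitimate since $\rho_z$ is a derivation of the commutative algebra $u(\mathfrak h)$) by the ideal generated by $f:=z^p-z^{[p]}+\Theta=z^p-\lambda z+\Theta$, where $z^{[p]}=\lambda z$ since $\dim\mathfrak g=1$. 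Using the algebraic‑representation identity $\rho_z^p=\lambda\rho_z$ on $u(\mathfrak h)$ (Definition~\ref{D:ARRLie}(ii), extended from $\mathfrak h$ as both sides are derivations), the element $z^p-\lambda z$ is central, so $f$ is central if and only if $[z,\Theta]=\rho_z(\Theta)=0$. When $f$ is central and monic of degree $p$ the quotient is free of rank $p$ over $u(\mathfrak h)$, giving dimension $p^{n+1}$; conversely if $\rho_z(\Theta)\neq0$ then $[z,f]=\rho_z(\Theta)$ is a nonzero element of $u(\mathfrak h)$ lying in the defining ideal, forcing the dimension below $p^{n+1}$. Hence (a) $\Leftrightarrow$ condition (ii).

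Since $\Delta$ and $\epsilon$ are to be algebra maps, they are determined on the generators $\mathfrak h$ and $z$, and it suffices to check that they respect the two defining relations and the (co)unit/coassociativity axioms. Because $u(\mathfrak h)$ is commutative and $\chi\in(u(\mathfrak h)^+)^{\otimes 2}$, one verifies directly that $\Delta$ respects $[z,r]=\rho_z(r)$, that $\epsilon$ is well defined and counital, and that coassociativity on $z$ is exactly the cocycle condition $\partial^2(\chi)=0$, by comparing $(\Delta\otimes1)\Delta(z)$ with $(1\otimes\Delta)\Delta(z)$ against the formula for $\partial^2$ in~\eqref{E:partials}. The remaining, decisive point is that $\Delta$ respect the relation $f=0$, and I expect this characteristic‑$p$ power computation to be the main obstacle.

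To carry it out, write $u:=z\otimes1+1\otimes z$, so $\Delta(z)=u+\chi$, and compute $\Delta(z)^p=(u+\chi)^p$ in $u_z(\mathscr D)\otimes u_z(\mathscr D)$ by Jacobson's formula. The key simplification is that $u(\mathfrak h)\otimes u(\mathfrak h)$ is a commutative subalgebra on which $\operatorname{ad}u$ acts as the derivation $\delta:=\rho_z\otimes1+1\otimes\rho_z$; an easy induction gives $(\operatorname{ad}(tu+\chi))^{k}(u)=-t^{k-1}\delta^{k}(\chi)$, so in Jacobson's expansion all Lie terms $s_i(u,\chi)$ vanish except $s_{p-1}(u,\chi)=\delta^{p-1}(\chi)$, whence $(u+\chi)^p=u^p+\chi^p+\delta^{p-1}(\chi)$. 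Substituting $u^p=z^p\otimes1+1\otimes z^p=\lambda u-\Theta\otimes1-1\otimes\Theta$ and recognizing $\chi^p-\lambda\chi+\delta^{p-1}(\chi)=\Phi_z(\chi)$ from~\eqref{E:phi}, everything collapses to
\[
\Delta\!\left(z^p-\lambda z+\Theta\right)=\Phi_z(\chi)-\partial^1(\Theta).
\]
Thus $\Delta$ respects $f=0$ iff condition (iii) holds, and combined with the previous paragraph the bialgebra (c) exists iff $\partial^2(\chi)=0$ and condition (iii).

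Finally, assuming $\partial^2(\chi)=0$ and (iii), the inclusion $u(\mathfrak h)\hookrightarrow u_z(\mathscr D)$ and the quotient $u_z(\mathscr D)\twoheadrightarrow u_z(\mathscr D)/(u(\mathfrak h)^+)\cong u(\mathfrak g)$ realize $u_z(\mathscr D)$ as an extension of the connected Hopf algebra $u(\mathfrak g)$ by the connected Hopf algebra $u(\mathfrak h)$ (the abelian matched pair of~\cite[Lemma~4.3]{XWang}); connectedness is preserved under such extensions, so $u_z(\mathscr D)$ is connected and hence a Hopf algebra, a connected bialgebra having an antipode. For the primitive space, left‑exactness of $\Prim$ on this extension yields $0\to\mathfrak h\to\Prim(u_z(\mathscr D))\to\mathfrak g$, and a lift $z-s$ (with $s\in u(\mathfrak h)^+$) of a generator of $\mathfrak g$ is primitive iff $\overline\Delta(z-s)=\chi-\overline\Delta(s)=0$, i.e.\ iff $\chi$ is a coboundary. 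Hence $\dim\Prim=n$ (so that $u_z(\mathscr D)\in\mathscr X(n)$) iff $\chi$ is \emph{not} a coboundary, the remaining half of condition (i). Assembling (a)--(d) gives the stated equivalence; the crux is the power computation of $\Delta(z)^p$ and its identification with $\Phi_z(\chi)$, while the rest reduces to the cobar formulas~\eqref{E:partials},~\eqref{E:phi} and the extension structure of~\cite{XWang}.
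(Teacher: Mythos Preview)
Your argument is correct and its core coincides with the paper's: both hinge on the identity $\Delta(z^p-\lambda z+\Theta)=\Phi_z(\chi)-\partial^1(\Theta)$, which you obtain via Jacobson's formula precisely as the paper does (the paper packages the same step as \cite[Lemma~2.2]{XWang}), and both read off the cocycle condition from coassociativity. The difference is one of presentation: where the paper simply cites \cite[Theorem~0.4(a),(c)]{XWang} for the forward direction and for the primitive-space criterion, you give self-contained arguments---the Ore-extension centrality argument for (ii) and the extension/lifting argument for~(d). This buys you independence from \cite{XWang} at the cost of a little more work; one small point to tighten is your claim that a primitive element mapping onto $z\in\mathfrak g$ must have the form $z-s$ with $s\in u(\mathfrak h)^+$, which needs a line of justification (e.g.\ via the coradical filtration, since primitives lie in degree~$1$ and the PBW basis places $z^i$ with $i\ge 2$ strictly above).
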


\begin{proof}
It follows from \cite[Theorem 0.4 (a), case $n=1$]{XWang} that any datum $\mathscr D=(\T, \Theta,\chi)$, which satisfies (ii), (iii) with $\chi$ being a $2$-cocycle, gives arise to a quantum $p$-group $u_z(\mathscr D)$ of dimension $p^{\dim \mathfrak h+1}$. Moreover, \cite[Theorem 0.4 (c)]{XWang} states that $u_z(\mathscr D)$ has primitive space isomorphic to $\mathfrak h$ if and only if $[\chi]\neq 0$ in $\HL^2\left(\Omega\, u(\mathfrak h)\right)$. This proves one direction.  

Conversely, suppose $u_z(\mathscr D)$ is a PD of $u(\T)$. By the coassociativity of $\Delta$ on $z$:
\begin{align*}
0=&\ (1\otimes \Delta)\Delta(z)-(\Delta\otimes 1)\Delta(z)\\
=&\ (1\otimes \Delta-\Delta\otimes 1)(z\otimes 1+1\otimes z+\chi)\\
=&\ z\otimes 1\otimes 1+1\otimes (z\otimes 1+1\otimes z+\chi)+(1\otimes \Delta)(\chi)-(z\otimes 1+1\otimes z+\chi)\otimes 1\\
&-1\otimes 1\otimes z-(\Delta\otimes 1)(\chi)\\
=&\ 1\otimes \chi-(\Delta\otimes 1)(\chi)+(1\otimes \Delta)(\chi)-\chi\otimes 1\\
=&\ \partial^2(\chi).
\end{align*}
So $\chi$ is a $2$-cocycle. Write $z^{[p]}=\lambda z$, for some $\lambda\in \field$, (ii) comes from Definition~\ref{D:FirstPCD}, $\rho_z(\Theta)=[z,\Theta]=-[z,z^p-\lambda z]=0$. For (iii), we apply \cite[Lemma 2.2]{XWang}, Equations~(\ref{E:partials}) and (\ref{E:phi}) to get 
\begin{align*}
0=&\ \Delta(z^p-\lambda z+\Theta)\\
=&\ \Delta(z)^p-\lambda\,\Delta(z)+\Delta(\Theta)\\
=&\ (z\otimes 1+1\otimes z+\chi)^p-\lambda\, (z\otimes 1+1\otimes z+\chi)+\Delta(\Theta)\\
=&\ z^p\otimes 1+1\otimes z^p+\chi^p+\rho_z^{p-1}(\chi)-\lambda(z\otimes 1+1\otimes z+\chi)+\Delta(\Theta)\\
=&\ \chi^p-\lambda\,\chi+\rho_z^{p-1}(\chi)+(z^p-\lambda z)\otimes 1+1\otimes (z^p-\lambda z)+\Delta(\Theta)\\
=&\ \chi^p-\lambda\,\chi+\rho_z^{p-1}(\chi)-(1\otimes \Theta-\Delta(\Theta)+\Theta\otimes 1)\\
=&\ \Phi_z(\chi)-\partial^1(\Theta).
\end{align*}
Finally, one sees that $\chi$ is not a $2$-coboundary by \cite[Theorem 0.4 (c)]{XWang} again. This completes the proof.
\end{proof}

\begin{remark}\label{R:CCzero}
Let $\mathscr D=(\T,\Theta,\chi)$ be a PD datum with respect to some $0\neq z\in \mathfrak g$. 
\begin{itemize}
\item[(i)]  We have $[\Phi_z(\chi)]=[\partial^1(\Theta)]=0$ in $\HL^2\left(\Omega\, u(\mathfrak h)\right)$. 
\item[(ii)] The antipode $S$ of $u_z(\mathscr D)$ is the same as the antipode of $u(\T)$ when restricted to $u(\mathfrak h)$ and $S(z)=-z-m(S\otimes \text{Id})(\chi)$, where $m$ is the multiplication of $u(\mathfrak h)$.
\end{itemize}
\end{remark}


\section{Isomorphism classes of primitive deformation}
\label{sec:iso classes of PD}

In this section, we introduce a cohomological-type set together with an automorphism group action in order to describe isomorphism classes of primitive deformations of an abelian type. The following lemma is fundamental in the classification of such primitive deformations in terms of PD data.

\begin{lem}
Let $\T=(\mathfrak g,\mathfrak h,\rho)$ and $\T'=(\mathfrak g',\mathfrak h',\rho')$ be two abelian types, and let $z\in \mathfrak g$ and $z'\in\mathfrak g'$ be any two nonzero elements. 
\begin{itemize}
\item[(i)] Suppose $\mathscr D=(\T,\Theta,\chi)$ and $\mathscr D'=(\T',\Theta',\chi')$ are two PD data with respect to $z$ and $z'$, respectively. If $u_z(\mathscr D)\cong u_{z'}(\mathscr D')$ as Hopf algebras, then $\T\cong \T'$.
\item[(ii)] Suppose $\T\cong \T'$. For any PD datum $\mathscr D=(\T,\Theta,\chi)$ with respect to $z$, there exists a PD datum $\mathscr D'=(\T',\Theta',\chi')$ with respect to $z'$ such that $u_z(\mathscr D)\cong u_{z'}(\mathscr D')$ as Hopf algebras. 
\end{itemize}   
\end{lem}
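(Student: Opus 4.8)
My plan is to recover the abelian type $\T$ from the intrinsic Hopf structure of $u_z(\mathscr D)$, so that (i) and (ii) become the two directions of this reconstruction. For (i), let $\Psi\colon u_z(\mathscr D)\to u_{z'}(\mathscr D')$ be a Hopf isomorphism. Any Hopf map preserves primitive spaces, and by Definition~\ref{D:FirstPCD}(ii) these are $\h$ and $\h'$; hence $\phi_2:=\Psi|_{\h}\colon\h\to\h'$ is an isomorphism of restricted Lie algebras, and it carries the subalgebra $u(\h)=\langle\h\rangle$ onto $u(\h')$, so $\Psi(u(\h)^+)=u(\h')^+$. Since $u(\h)$ is the normal Hopf subalgebra of the abelian extension $u(\h)\hookrightarrow u_z(\mathscr D)\twoheadrightarrow u(\g)$ (\cite[Lemma~4.3]{XWang}), $\Psi$ descends to a Hopf isomorphism $\bar\Psi\colon u(\g)\to u(\g')$ of the $p$-dimensional quotients; restricting $\bar\Psi$ to the one-dimensional primitive spaces records $\bar z\mapsto \alpha\bar z'$ for some $\alpha\in\field^\times$, and I take this map to be $\phi_1^{-1}$ (it is restricted because $\bar\Psi$ is a Hopf map). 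It then remains only to verify the compatibility square of Definition~\ref{D:2TIsom}, i.e.\ $\phi_2\circ\rho_z=\rho'_{\alpha z'}\circ\phi_2$.

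To check compatibility I apply $\Psi$ to the cross relation $[z,r]=\rho_z(r)$ ($r\in\h$), which is inherited unchanged from $u(\T)$; this yields $\phi_2(\rho_z(r))=[\Psi(z),\phi_2(r)]$, reducing everything to controlling $\Psi(z)$. Writing $\Psi(z)=\sum_{j=0}^{p-1}u_j(z')^j$ with $u_j\in u(\h')$, the key claim is that only a scalar multiple of $z'$ survives, namely $\Psi(z)=\alpha z'+w$ with $w\in u(\h')^+$. I would extract this from the coproduct: since $\Psi^{\otimes 2}(\chi)\in(u(\h')^+)^{\otimes 2}$, the element $\overline\Delta(\Psi(z))=\Delta(\Psi(z))-\Psi(z)\otimes 1-1\otimes\Psi(z)$ has $z'$-degree $0$ in both tensor factors. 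Expanding $\Delta((z')^j)=(z'\otimes 1+1\otimes z'+\chi')^j$ and inspecting the top $z'$-bidegree, the cross terms $\binom{j}{a}(z')^a\otimes (z')^b$ with $1\le a\le j-1$ cannot be cancelled and must vanish; as $\binom{j}{a}\equiv 0\pmod p$ for all $0<a<j$ exactly when $j$ is a power of $p$, this forces $u_j=0$ for $2\le j\le p-1$, and a second look at the $(1,0)$-bidegree forces $\Delta(u_1)=u_1\otimes 1$, whence $u_1=\alpha\in\field$. With $\Psi(z)=\alpha z'+w$ and $\h'$ abelian (so $u(\h')$ commutative), the term $[w,\phi_2(r)]$ vanishes, giving $[\Psi(z),\phi_2(r)]=\alpha\,\rho'_{z'}(\phi_2(r))$ and hence the desired compatibility, proving $\T\cong\T'$.

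For (ii) I run this construction in reverse. Given $\phi=(\phi_1,\phi_2)$ with $\phi_1^{-1}(z)=\alpha z'$, let $\psi\colon u(\h)\to u(\h')$ be the Hopf isomorphism extending $\phi_2$, and define $F\colon u_z(\mathscr D)\to u_{z'}(\mathscr D')$ by $F|_{u(\h)}=\psi$ and $F(z)=\alpha z'$, where I set $\chi':=\alpha^{-1}\psi^{\otimes 2}(\chi)$ and $\Theta':=\alpha^{-p}\psi(\Theta)$. These scalings are \emph{forced} by well-definedness of $F$: matching $(F\otimes F)\Delta(z)$ against $\Delta(\alpha z')=\alpha z'\otimes 1+1\otimes \alpha z'+\alpha\chi'$ gives $\chi'$; matching $F(z^p-z^{[p]}+\Theta)$ against a scalar multiple of the target relation $(z')^p-(z')^{[p]}+\Theta'$ gives $\Theta'$, where I use that $\phi_1$ is restricted to get $(z')^{[p]}=\lambda' z'$ with $\lambda'=\alpha^{1-p}\lambda$; and the cross relation $F([z,r])=\alpha\rho'_{z'}(\phi_2(r))=\phi_2(\rho_z(r))$ holds precisely by the compatibility square of $\phi$. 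A dimension count shows the resulting algebra map $F$ is a Hopf isomorphism. Since $u_z(\mathscr D)$ is a primitive deformation, its isomorphic image $u_{z'}(\mathscr D')$ is again a quantum $p$-group of dimension $p^{n+1}$ with primitive space $\cong\h'$, so $\mathscr D'=(\T',\Theta',\chi')$ is a PD datum with respect to $z'$ by Definition~\ref{D:FirstPCD}; alternatively one verifies conditions (i)--(iii) of Proposition~\ref{P:RCdata} directly from functoriality of the cobar construction together with $\Bock(\alpha r)=\alpha^p\Bock(r)$ (Proposition~\ref{P:Cobar}(iv)) and the scaling of $\Phi_z$ under $z\mapsto\alpha z$.

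The step I expect to be the main obstacle is the coproduct analysis in (ii)'s converse, i.e.\ pinning down $\Psi(z)=\alpha z'+w$ in part (i): this is where the characteristic-$p$ input (vanishing of binomial coefficients away from $p$-power indices) enters, and it is exactly what makes the abelian hypotheses on $\h,\h'$ usable, since the leftover commutator $[w,\phi_2(r)]$ must die in $u(\h')$. The remaining delicacy is purely bookkeeping: tracking the scalar $\alpha$ from $\phi_1$ as it propagates (as $\alpha^{-1}$, $\alpha^{-p}$, $\alpha^{1-p}$) through the coproduct, the restriction map, and the $p$-power relation; once these are reconciled, well-definedness of $F$ is routine.
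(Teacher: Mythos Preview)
Your argument is correct and reaches the same conclusion as the paper, but by a genuinely different and more self-contained route. For (i), the paper observes that $F$ preserves primitives, writes down the induced commutative diagram of Hopf extensions $u(\h)\hookrightarrow u_z(\mathscr D)\twoheadrightarrow u(\g)$, and then invokes the abelian matched-pair formalism (\cite[Definition \& Lemma 3.5]{Ho} together with \cite[Lemma 4.3, Corollary 4.4]{XWang}) to read off the isomorphism $\T\cong\T'$ directly from the pair $(\overline F, F|_{u(\h)})$. You instead carry out an explicit PBW/coproduct analysis to force $\Psi(z)=\alpha z'+w$ with $w\in u(\h')^+$, and then exploit commutativity of $u(\h')$ to kill $[w,\phi_2(r)]$ and verify the compatibility square by hand. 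Your binomial-coefficient step (forcing $u_j=0$ for $2\le j\le p-1$ because $\binom{j}{1}=j\not\equiv 0$, then $u_1\in\field$ from the $(1,0)$-bidegree) is sound once one notes that right multiplication by $z'\otimes 1$ is injective on bidegree-$(0,0)$ elements by PBW; you should also record that $\alpha\neq 0$ since $\bar\Psi$ is an isomorphism. The paper's approach is shorter but imports external machinery, whereas yours is elementary and makes explicit exactly where the abelian hypothesis on $\h'$ is used.

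For (ii), the paper simply takes $\gamma=1$ without loss of generality and sets $\Theta'=\phi_2(\Theta)$, $\chi'=(\phi_2\otimes\phi_2)(\chi)$; you keep the scalar $\alpha$ throughout and derive $\chi'=\alpha^{-1}\psi^{\otimes 2}(\chi)$, $\Theta'=\alpha^{-p}\psi(\Theta)$, which is the content of the later Proposition~\ref{P:UH2} specialized to an isomorphism rather than a change of basis. Both are fine; your version has the virtue of showing explicitly that the choice of $z'$ is immaterial.
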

\begin{proof}
(i) Let $F: u_z(\mathscr D)\to u_{z'}(\mathscr D')$ be an isomorphism of Hopf algebras. We apply similar idea as in the proof of \cite[Lemma 4.7]{XWang}. Since $F$ preserves the primitive space, we have the following commutative diagram of extensions of Hopf algebras:
\[
\xymatrix{
1\ar[rr]
&&u(\mathfrak h)\ar[rr]\ar[d]_-{F|_{u(\mathfrak h)}}
&&u_z(\mathscr D)\ar[rr]\ar[d]_-{F}
&&u(\mathfrak g)\ar[rr]\ar[d]_-{\overline{F}} 
&& 1\\
1\ar[rr]
&&u(\mathfrak h')\ar[rr]
&&u_{z'}(\mathscr D')\ar[rr]
&&u(\mathfrak g')\ar[rr]
&&1.
}
\]
By the definition of abelian matched pair associated to $u(\mathfrak g)$-extension over $u(\mathfrak h)$ (resp. $u(\mathfrak g')$-extension over $u(\mathfrak h')$) \cite[Definition \& Lemma 3.5]{Ho}, it is easy to check the pair of isomorphisms $(\overline{F},F|_{u(\mathfrak h)})$ induces an isomorphism between their corresponding matched pairs. Hence it gives an isomorphism between $\T$ and $\T'$ by \cite[Lemma 4.3 \& Corollary 4.4]{XWang}. (ii) Without loss of generality, we can assume that $\T\cong \T'$ via $\phi=(\phi_1,\phi_2)=(\gamma,\phi_2)$, where $\phi_1(z')= \gamma z$, and we take $\gamma =1$. Naturally extend $\phi_1$ and $\phi_2$ to $u(\mathfrak g')$ and $u(\mathfrak h)$ by Definition~\ref{D:2TIsom}. Then one sees that we can take $\Theta'=\phi_2(\Theta)$ and $\chi'=(\phi_2\otimes\phi_2)(\chi)$.
\end{proof}

\begin{remark}\label{rmk:isotype} Part (i) of the above lemma implies that primitive deformations of non-isomorphic types are non-isomorphic. Part (ii) implies that for each abelian type $\T$ up to isomorphism, its primitive deformation can be determined up to some choice of basis element $z \in \mathfrak g$. Therefore, together with the comment below Definition~\ref{D:FirstPCD}, every quantum $p$-group in $\mathscr X(n)$ determines an abelian type $\T$ of rank $n$ up to isomorphisms, and we say such quantum $p$-group is of type $\T$.
\end{remark} 

\begin{deff}\label{D:EquiR}
Let $\T=(\mathfrak g,\mathfrak h,\rho)$ be an abelian type. We define the set with respect to some nonzero element $z$ of $\mathfrak g$
$$
\mathcal H^2(\T):=\set[\Bigg]
{
\left[(\Theta,\chi)\right] \given
\begin{tabular}{l }
$(\T,\Theta,\chi)$ is a PD datum \emph{w.r.t.} $z$, and $(\Theta,\chi)\sim (\Theta',\chi')$ iff\\ 
$\Theta'-\Theta=\Phi_z(s)$ and $\chi'-\chi=\partial^1(s)$, for some $s\in u(\mathfrak h)^+$.\\
\end{tabular}
}. 
$$
It is easy to check that the relation $\sim$ defined above is an equivalence relation among all PD data with respect to $z$. By \cite[Proposition~5.5]{XWang}, the set $\mathcal H^2(\T)$ classifies quantum $p$-groups in $\mathscr X(n)$ as Hopf extensions of $u(\mathfrak h)$ by $u(\mathfrak g)$, with abelian matched pair given by $\T$ up to equivalent extensions. 
\end{deff}

Throughout, let $\T=(\mathfrak g,\mathfrak h,\rho)$ be any abelian type. We fix a basis $z$ of $\mathfrak g$ with $z^{[p]}=\lambda z$, for some $\lambda\in \field$. Proposition~\ref{P:UH2} will show that our definitions of $\mathcal H^2(\T)$ and its $\Aut(\T)$-quotient do not depend on the choice of $z$. Note that isomorphisms from $\T$ to itself form a group with the group multiplication given by composition of maps according to Definition \ref{D:2TIsom}. We denote the group by $\Aut(\T)$. For any $\phi\in\Aut(\T)$, we can write $\phi=(\phi_1,\phi_2)=(\gamma,\phi_2)$, where $\phi_1$ and $\phi_2$ are two automorphisms of the restricted Lie algebras $\mathfrak g$ and $\mathfrak h$, respectively, such that $\phi_1(z)=\gamma z$, for some $\gamma\in \field^\times$. It is easy to check that $\gamma\,\lambda=\gamma^p\,\lambda$, since $\phi_1$ is an automorphism of $\mathfrak g$.

\begin{lem}\label{L:Atype}
Let $\mathscr D=(\T,\Theta,\chi)$ be a datum, and $\phi=(\gamma,\phi_2)\in \Aut(\T)$. Then $\phi_2$ can be extended to a graded automorphism of $\Omega\, u(\mathfrak h)$ commuting with the differentials. Moreover,
\begin{itemize}
\item[(i)] $\rho_z(\phi_2(\Theta))=\gamma\, \phi_2(\rho_z(\Theta))$;
\item[(ii)] $\Phi_z(\gamma\,(\phi_2\otimes \phi_2)(\chi))=\gamma^p\, (\phi_2\otimes \phi_2)\, \Phi_z(\chi)$.
\end{itemize}
\end{lem}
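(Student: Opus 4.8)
The plan is to establish the three assertions by working inside the graded algebra $\Omega\, u(\mathfrak h)$ and tracking how the automorphism $\phi_2$ and the scalar $\gamma$ interact with the maps $\rho_z$ and $\Phi_z$. First I would extend $\phi_2$ from $\mathfrak h$ to a graded algebra automorphism of $\Omega\, u(\mathfrak h)$: since $\phi_2$ is a restricted Lie algebra automorphism of $\mathfrak h$, it induces a Hopf algebra automorphism of $u(\mathfrak h)$, which restricts to $u(\mathfrak h)^+$ and extends diagonally to each tensor power $(u(\mathfrak h)^+)^{\otimes i}$. Because $\phi_2$ preserves the coproduct of $u(\mathfrak h)$, it commutes with $\overline{\Delta}$, and hence with the differentials $\partial^n$ as defined in terms of $\overline{\Delta}$; this gives the opening ``Moreover'' claim before the itemized parts.

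For part (i) the key point is the commutation relation between $\rho_z$ and $\phi_2$. By Definition~\ref{D:2TIsom} applied to an element of $\Aut(\T)$ with $\phi_1(z)=\gamma z$, the diagram forces $\phi_2 \circ \rho_z = \rho_{\phi_1^{-1}(z)} \circ \phi_2 = \gamma^{-1}\, \rho_z \circ \phi_2$ on $\mathfrak h$; equivalently $\rho_z \circ \phi_2 = \gamma\, \phi_2 \circ \rho_z$. Since $\rho_z$ acts as a derivation on $\Omega\, u(\mathfrak h)$ and $\phi_2$ is an algebra map, this relation propagates from $\mathfrak h$ to all of $u(\mathfrak h)$ and its tensor powers, so applying it to $\Theta$ yields $\rho_z(\phi_2(\Theta)) = \gamma\, \phi_2(\rho_z(\Theta))$, which is exactly (i).

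For part (ii) I would expand $\Phi_z(\gamma\,(\phi_2\otimes \phi_2)(\chi))$ using the explicit formula in Equation~(\ref{E:phi}) and feed in the same commutation law $\rho_z \circ \phi_2 = \gamma\, \phi_2 \circ \rho_z$ (now on the level of $u(\mathfrak h)^+$). Writing $\chi$ as a sum of terms $r\otimes s$ and abbreviating $\Psi := \phi_2\otimes\phi_2$, the three summands of $\Phi_z$ transform as follows: the $p$-th power term contributes $\gamma^p\,\Psi(r^p\otimes s^p)$ because $\phi_2$ is an algebra map and the leading coefficient is $(\gamma)^p$ after pulling the scalar through the $p$-th power; the Frobenius-type relation $\gamma\lambda=\gamma^p\lambda$ noted in the excerpt reconciles the linear $-\lambda\,\cdot$ term, giving $\gamma^p$ there as well; and each summand $\rho_z^i(r)\otimes \rho_z^{p-1-i}(s)$ picks up a factor $\gamma^{i}\cdot\gamma^{p-1-i}\cdot\gamma = \gamma^p$ once the $\gamma$ from commuting $\rho_z$ past $\phi_2$ on each tensor leg and the outer $\gamma$ are collected. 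All three contributions thus carry the common factor $\gamma^p$, yielding $\Phi_z(\gamma\,\Psi(\chi)) = \gamma^p\,\Psi(\Phi_z(\chi))$.

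I expect the main obstacle to be bookkeeping the scalar powers of $\gamma$ in part (ii): one must verify that the $p$-th power term, the $\lambda$-term, and every term of the binomial sum all produce precisely $\gamma^p$, which requires careful use of both $\rho_z\circ\phi_2=\gamma\,\phi_2\circ\rho_z$ on each of the two tensor factors and the identity $\gamma\lambda=\gamma^p\lambda$ to handle the $\lambda$-term (where naively one gets $\gamma\lambda$ rather than $\gamma^p\lambda$). Once the commutation law is correctly established in degree one and seen to be compatible with the algebra structure, these verifications are direct substitutions into Equation~(\ref{E:phi}), so the difficulty is entirely in organizing the computation rather than in any conceptual step.
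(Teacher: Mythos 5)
Your proposal is correct and follows essentially the same route as the paper: extend $\phi_2$ to a graded automorphism of $\Omega\, u(\mathfrak h)$ commuting with the differentials (since they are built from the comultiplication), derive $\rho_z\circ\phi_2=\gamma\,\phi_2\circ\rho_z$ from the commutative diagram in Definition~\ref{D:2TIsom}, and then push that relation together with $\gamma\lambda=\gamma^p\lambda$ through the three terms of Equation~(\ref{E:phi}). Your explicit remark that the commutation law propagates from $\mathfrak h$ to all of $u(\mathfrak h)^+$ because $\rho_z$ is a derivation and $\phi_2$ an algebra map is a small point the paper leaves implicit, but otherwise the arguments coincide.
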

\begin{proof}
It is clear that $\phi_2$ can be extended to a Hopf automorphism of $u(\mathfrak h)^+$. Note that the cobar construction $\Omega\, u(\mathfrak h)$ is the tensor algebra $\bigoplus_{i\ge 0} (u(\mathfrak h)^+)^{\otimes i}$. Then, $\phi_2$ can be extended to a graded automorphism of $\Omega\, u(\mathfrak h)$, which commutes with the differentials since the differentials are defined in terms of the comultiplication of $u(\mathfrak h)^+$. 

(i) By the commutative diagram in Definition \ref{D:2TIsom}, we have $\rho_z\circ \phi_2=\gamma\,\phi_2\circ \rho_z$ on $u(\mathfrak h)^+$. (ii) Using (i) and the fact that $\gamma\lambda=\gamma^p\lambda$, we have 
\begin{align*}
&\ \Phi_z\left(\gamma\, (\phi_2\otimes \phi_2)(\chi)\right)\\
=&\ \left(\gamma\, (\phi_2\otimes \phi_2)(\chi)\right)^p-\lambda\,\left(\gamma\, (\phi_2\otimes \phi_2)(\chi)\right)+\rho_z^{p-1}\left(\gamma\,(\phi_2\otimes \phi_2)(\chi)\right)\\
=&\ \gamma^p\, (\phi_2\otimes \phi_2)(\chi^p)-(\gamma\lambda)\, (\phi_2\otimes \phi_2)(\chi)+\gamma\left(\rho_z\otimes 1+1\otimes \rho_z\right)^{p-1}(\phi_2\otimes \phi_2)(\chi)\\
=&\ \gamma^p\,(\phi_2\otimes \phi_2)(\chi^p)-(\gamma^p\lambda)\, (\phi_2\otimes \phi_2)(\chi)+\gamma^p\,(\phi_2\otimes \phi_2)\left(\rho_z\otimes 1+1\otimes \rho_z\right)^{p-1}(\chi)\\
=&\ \gamma^p\,(\phi_2\otimes \phi_2)\left(\chi^p-\lambda\, \chi+\rho_z^{p-1}(\chi)\right)\\
=&\ \gamma^p\,(\phi_2\otimes \phi_2)\left(\Phi_z(\chi)\right).
\end{align*}
\end{proof}

\begin{lem}\label{L:PCDAct}
Let $\mathscr D=(\T,\Theta,\chi)$ be a datum, and $\phi=(\gamma,\phi_2)\in \Aut(\T)$. Then, with respect to $z$,
\begin{itemize}
\item[(i)] $\mathscr D$ is a PD datum if and only if $\mathscr D':=\left(\T, \gamma^p\,\phi_2(\Theta),\gamma\,(\phi_2\otimes \phi_2)(\chi)\right)$ is a PD datum.
\item[(ii)] $\Aut(\T)$ acts on $\mathcal H^2(\T)$ induced by the map $(\Theta,\chi)\mapsto (\gamma^p\,\phi_2(\Theta),\gamma\, (\phi_2\otimes \phi_2)(\chi))$.
\end{itemize} 
\end{lem}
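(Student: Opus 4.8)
The plan is to deduce everything from the characterization of PD data in Proposition~\ref{P:RCdata} together with the compatibility relations recorded in Lemma~\ref{L:Atype}. Throughout, set $\Theta'=\gamma^p\,\phi_2(\Theta)$ and $\chi'=\gamma\,(\phi_2\otimes\phi_2)(\chi)$. Since $\phi_2$ restricts to a Hopf automorphism of $u(\mathfrak h)$ preserving the augmentation ideal $u(\mathfrak h)^+$, we have $\Theta'\in u(\mathfrak h)^+$ and $\chi'\in(u(\mathfrak h)^+)^{\otimes 2}$, so $\mathscr D'=(\T,\Theta',\chi')$ is automatically a datum; hence only the three PD conditions of Proposition~\ref{P:RCdata} must be checked.

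For part~(i) I would transfer each condition along $\phi$. For the cocycle/coboundary condition, the graded extension of $\phi_2$ commutes with the differentials (Lemma~\ref{L:Atype}) and $\gamma\in\field^\times$, so $\partial^2(\chi')=\gamma\,\phi_2^{\otimes 3}(\partial^2\chi)$ and $\chi'$ is a $2$-cocycle iff $\chi$ is; likewise $\chi'=\partial^1(s')$ iff $\chi=\partial^1(\gamma^{-1}\phi_2^{-1}(s'))$, so the non-coboundary condition transfers as well. Condition~(ii) uses Lemma~\ref{L:Atype}(i): $\rho_z(\Theta')=\gamma^p\rho_z(\phi_2(\Theta))=\gamma^{p+1}\phi_2(\rho_z(\Theta))$, which vanishes iff $\rho_z(\Theta)=0$. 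Condition~(iii) uses Lemma~\ref{L:Atype}(ii): $\Phi_z(\chi')=\gamma^p(\phi_2\otimes\phi_2)(\Phi_z\chi)$ while $\partial^1(\Theta')=\gamma^p\,\phi_2^{\otimes 2}(\partial^1\Theta)$, so $\Phi_z(\chi')=\partial^1(\Theta')$ iff $\Phi_z(\chi)=\partial^1(\Theta)$. This proves $\mathscr D$ PD $\Rightarrow\mathscr D'$ PD; applying the same argument to $\phi^{-1}=(\gamma^{-1},\phi_2^{-1})\in\Aut(\T)$, which sends $\mathscr D'$ back to $\mathscr D$, yields the converse.

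For part~(ii) the first step is to record the degree-$1$ analogue of Lemma~\ref{L:Atype}(ii), namely $\Phi_z(\gamma\,\phi_2(s))=\gamma^p\,\phi_2(\Phi_z(s))$ for all $s\in u(\mathfrak h)^+$; this is a one-line computation from $\Phi_z(s)=s^p-\lambda s+\rho_z^{p-1}(s)$, using that $\phi_2$ is an algebra map on $u(\mathfrak h)$, the relation $\rho_z\circ\phi_2=\gamma\,\phi_2\circ\rho_z$ from the proof of Lemma~\ref{L:Atype}, and $\gamma\lambda=\gamma^p\lambda$. Granting this, write $\phi\cdot(\Theta,\chi):=(\Theta',\chi')$; part~(i) already shows this sends PD data to PD data. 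I would then check well-definedness on $\sim$-classes of Definition~\ref{D:EquiR}: if $(\Theta,\chi)\sim(\Theta_1,\chi_1)$ witnessed by $s$, i.e. $\Theta_1-\Theta=\Phi_z(s)$ and $\chi_1-\chi=\partial^1(s)$, then with $s':=\gamma\,\phi_2(s)$ one gets $\gamma^p\phi_2(\Theta_1)-\gamma^p\phi_2(\Theta)=\Phi_z(s')$ and $\gamma\,\phi_2^{\otimes 2}(\chi_1)-\gamma\,\phi_2^{\otimes 2}(\chi)=\partial^1(s')$, so the images stay $\sim$-equivalent. Finally, the group-action axioms: $\phi=(1,\Id)$ acts trivially, and for $\phi=(\gamma,\phi_2)$, $\psi=(\delta,\psi_2)$ whose composite in $\Aut(\T)$ has scalar $\gamma\delta$ and $\mathfrak h$-part $\psi_2\phi_2$, direct substitution yields $\psi\cdot(\phi\cdot(\Theta,\chi))=\bigl((\gamma\delta)^p(\psi_2\phi_2)(\Theta),\,(\gamma\delta)(\psi_2\phi_2)^{\otimes 2}(\chi)\bigr)=(\psi\phi)\cdot(\Theta,\chi)$.

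The hard part is not any single computation but the bookkeeping of the two scalar weights: $\Phi_z$ mixes the Frobenius-type $p$-th power (which pulls out $\gamma^p$) with the term $-\lambda s$ (which pulls out only $\gamma$), so the degree-$1$ identity—and hence well-definedness—holds precisely because $\gamma\lambda=\gamma^p\lambda$. This same constraint forces the asymmetric weights $\gamma^p$ on $\Theta$ and $\gamma$ on $\chi$; getting those exponents right is the real content, and everything else is routine once Lemma~\ref{L:Atype} is available.
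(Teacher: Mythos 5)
Your proposal is correct and follows essentially the same route as the paper's own proof: both parts rest on transferring the three conditions of Proposition~\ref{P:RCdata} along $\phi$ via Lemma~\ref{L:Atype}, handling the converse by applying the same argument to $\phi^{-1}=(\gamma^{-1},\phi_2^{-1})$, and checking compatibility with the relation $\sim$ of Definition~\ref{D:EquiR} using $s'=\gamma\,\phi_2(s)$. Your two additions --- spelling out the degree-one identity $\Phi_z(\gamma\,\phi_2(s))=\gamma^p\,\phi_2(\Phi_z(s))$ (which the paper dismisses as a ``similar computation'') and verifying the identity/composition axioms of the group action --- are welcome details but do not change the underlying argument.
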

\begin{proof}
(i) It is clear that $\mathscr D'$ is a datum by Lemma \ref{L:Atype}. Assuming $\mathscr D$ is a PD datum with respect to $z$, we will show that so is $\mathscr D'$, that is, the conditions in Proposition \ref{P:RCdata} are satisfied. Note that
\begin{align*}
\partial^2\left(\gamma\,(\phi_2\otimes \phi_2)(\chi)\right)=\gamma\, \partial^2\left((\phi_2\otimes \phi_2)(\chi)\right)=\gamma\,(\phi_2\otimes \phi_2\otimes \phi_2)\partial^2(\chi)=0.
\end{align*}
Hence $\gamma\, (\phi_2\otimes \phi_2)(\chi)$ is a $2$-cocycle. If $\gamma\,(\phi_2\otimes \phi_2)(\chi)=\partial^1(s)$ for some $s\in u(\mathfrak h)^+$, then $ \chi=\partial^1(\gamma^{-1}\,\phi^{-1}_2(s))$. This contradicts to the fact that $\chi$ is not a $2$-coboundary. Thus, $\mathscr D'$ satisfies condition (i) in Proposition \ref{P:RCdata}. Next, by Lemma \ref{L:Atype} (i), we have
\[
\rho_z(\gamma^p\,\phi_2(\Theta))=\gamma^p\,\rho_z(\phi_2(\Theta))=\gamma^{p+1}\,\phi_2(\rho_z(\Theta))=0.
\]
Moreover, by Lemma \ref{L:Atype} (ii),
\begin{align*}
\Phi_z(\gamma\,(\phi_2\otimes \phi_2)(\chi))=\gamma^p\,(\phi_2\otimes \phi_2)(\Phi_z(\chi))=\gamma^p\,(\phi_2\otimes \phi_2)(\partial^1(\Theta))=\partial^1(\gamma^p\,\phi_2(\Theta)).
\end{align*}
Hence, $\mathscr D'$ is a PD datum with respect to $z$. The converse part can be proven by replacing $\phi$ with $\phi^{-1}=(\gamma^{-1},\phi_2^{-1})$. 

(ii) It follows from (i) and Definition \ref{D:FirstPCD} that the group $\Aut(\T)$ acts on the set $\{(\Theta,\chi)\,|\,(\T,\Theta,\chi)\ \text{is a PD datum {\it w.r.t.}}\, z\}$ via $\phi: (\Theta,\chi)\mapsto (\gamma^p\,\phi_2(\Theta),\gamma\, (\phi_2\otimes \phi_2)(\chi))$. Suppose $(\Theta,\chi)\sim (\Theta',\chi')$ by some $s\in\mathscr u(\mathfrak h)^+$, i.e.,
\begin{gather*}
\chi'-\chi=\partial^1(s)\ \mbox{and}\ \Theta'-\Theta=\Phi_z(s).
\end{gather*}
Similar computation as in (i) yields that  
\begin{align*}
\gamma^p\,\phi_2(\Theta')-\gamma^p\,\phi_2(\Theta)&=\Phi_z\left(\gamma\,\phi_2(s)\right),\\
 \gamma\, (\phi_2\otimes \phi_2)(\chi')-\gamma\,(\phi_2\otimes \phi_2)(\chi)&=\partial^1\left(\gamma\,\phi_2(s)\right).
\end{align*}
By Definition \ref{D:EquiR}, $(\gamma^p\,\phi_2(\Theta),\gamma\,(\phi_2\otimes \phi_2)(\chi))\sim (\gamma^p\,\phi_2(\Theta'),\gamma\,(\phi_2\otimes \phi_2)(\chi'))$, which implies that $\phi$ maps equivalence classes to equivalence classes in $\mathcal H^2(\T)$. Therefore, the $\Aut(\T)$-action on $\mathcal H^2(\T)$ is well-defined. 
\end{proof}

\begin{prop}\label{P:UH2}
Let $\T=(\mathfrak g,\mathfrak h,\rho)$ be any abelian type. Then $\mathcal H^2(\T)$ and its quotient $\mathcal H^2(\T)/\Aut(\T)$ are unique up to isomorphisms with respect to any $0\neq z\in \mathfrak g$.
\end{prop}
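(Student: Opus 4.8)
The plan is to reduce to the case of two basis elements differing by a scalar and to exhibit a single explicit bijection that transports all the relevant structure. Since $\dim \mathfrak g = 1$, any two nonzero elements of $\mathfrak g$ are related by $z' = \alpha z$ for a unique $\alpha \in \field^\times$, so it suffices to compare the set $\mathcal H^2(\T)$ built from $z$ with the one built from $z'$, which I denote $\mathcal H^2(\T)_z$ and $\mathcal H^2(\T)_{z'}$. First I would record how the basic data rescale: since $\rho$ is linear we have $\rho_{z'} = \alpha\,\rho_z$, and from $(z')^{[p]} = \alpha^p z^{[p]} = \alpha^p\lambda z = \alpha^{p-1}\lambda z'$ the new eigenvalue is $\lambda' = \alpha^{p-1}\lambda$. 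This is exactly the recipe forced by rescaling $z$ inside a fixed deformation: if $z^p - \lambda z + \Theta = 0$ and $\Delta(z) = z\otimes 1 + 1\otimes z + \chi$, then replacing $z$ by $z' = \alpha z$ gives $(z')^p - \lambda' z' + \alpha^p\Theta = 0$ and $\Delta(z') = z'\otimes 1 + 1\otimes z' + \alpha\chi$. I would therefore define
\[
\Psi_\alpha : (\Theta,\chi) \longmapsto (\alpha^p\Theta,\ \alpha\chi)
\]
and argue that it furnishes the desired identification.

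The crux is a single homogeneity identity for $\Phi$: for any homogeneous $\pi \in \Omega^*\, u(\mathfrak h)$,
\[
\Phi_{z'}(\alpha\pi) = (\alpha\pi)^p - \lambda'(\alpha\pi) + \rho_{z'}^{\,p-1}(\alpha\pi) = \alpha^p\bigl(\pi^p - \lambda\pi + \rho_z^{\,p-1}(\pi)\bigr) = \alpha^p\,\Phi_z(\pi),
\]
which uses $(\alpha\pi)^p = \alpha^p\pi^p$, $\lambda' = \alpha^{p-1}\lambda$, and $\rho_{z'}^{\,p-1} = \alpha^{p-1}\rho_z^{\,p-1}$. Granting this, I would check the three conditions of Proposition~\ref{P:RCdata} for $\Psi_\alpha(\Theta,\chi)$ with respect to $z'$: condition (i) is immediate since $\partial^2$ is linear and $\alpha\neq 0$, so $\alpha\chi$ is a $2$-cocycle and not a coboundary exactly when $\chi$ is; condition (ii) follows from $\rho_{z'}(\alpha^p\Theta) = \alpha^{p+1}\rho_z(\Theta) = 0$; and condition (iii) reads $\Phi_{z'}(\alpha\chi) = \alpha^p\Phi_z(\chi)$ versus $\partial^1(\alpha^p\Theta) = \alpha^p\partial^1(\Theta)$, which agree precisely when $\Phi_z(\chi) = \partial^1(\Theta)$. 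Thus $\Psi_\alpha$ maps PD data with respect to $z$ bijectively onto PD data with respect to $z'$, with inverse $\Psi_{\alpha^{-1}}$.

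Next I would show $\Psi_\alpha$ descends to the quotients. If $(\Theta,\chi)\sim(\Theta_1,\chi_1)$ via $s\in u(\mathfrak h)^+$, then applying the homogeneity identity to $s$ gives $\alpha^p(\Theta_1-\Theta) = \alpha^p\Phi_z(s) = \Phi_{z'}(\alpha s)$ and $\alpha(\chi_1-\chi) = \partial^1(\alpha s)$, so $\Psi_\alpha$ carries the $\sim$-class through $\alpha s$; hence $\Psi_\alpha$ induces a bijection $\mathcal H^2(\T)_z \to \mathcal H^2(\T)_{z'}$. Finally, for $\Aut(\T)$-equivariance the key observation is that the scaling factor $\gamma$ attached to $\phi = (\gamma,\phi_2)$ is basis-independent, since $\phi_1(z') = \alpha\phi_1(z) = \alpha\gamma z = \gamma z'$; thus the action formula $(\Theta,\chi)\mapsto(\gamma^p\phi_2(\Theta),\gamma(\phi_2\otimes\phi_2)(\chi))$ of Lemma~\ref{L:PCDAct} is literally the same with respect to $z$ and $z'$. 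As this formula only rescales and applies $\phi_2$ componentwise, it commutes with $\Psi_\alpha$, so $\Psi_\alpha$ is $\Aut(\T)$-equivariant and descends to a bijection $\mathcal H^2(\T)_z/\Aut(\T) \to \mathcal H^2(\T)_{z'}/\Aut(\T)$. None of the steps is genuinely hard; the only point requiring care---the main obstacle---is bookkeeping the Frobenius twist in the homogeneity identity $\Phi_{z'}(\alpha\pi) = \alpha^p\Phi_z(\pi)$ and confirming that the mismatched scalings $\alpha^p$ on $\Theta$ and $\alpha$ on $\chi$ are exactly what make conditions (ii)--(iii) and the $\Aut(\T)$-action transform compatibly.
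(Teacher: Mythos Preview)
Your proof is correct and follows essentially the same approach as the paper: both define the transport map $(\Theta,\chi)\mapsto(\alpha^p\Theta,\alpha\chi)$ (the paper uses $\gamma$ for your $\alpha$), verify it preserves the PD datum conditions of Proposition~\ref{P:RCdata} via the homogeneity $\Phi_{z'}(\alpha\pi)=\alpha^p\Phi_z(\pi)$, and then check compatibility with the equivalence relation and the $\Aut(\T)$-action. Your treatment is in fact more explicit than the paper's, which simply asserts the $\Aut(\T)$-compatibility; your observation that the scalar $\gamma$ in $\phi=(\gamma,\phi_2)$ is basis-independent is exactly the missing detail.
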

\begin{proof}
Let $z$ and $z'$ be two different bases of $\mathfrak g$. We write 
\[
z^{[p]}=\lambda\,z,\ z'^{[p]}=\lambda'\, z',\ z'=\gamma z
\]
for some $\lambda,\lambda'\in \field$ and $\gamma \in \field^\times$. Then we have $\gamma^p\lambda=\gamma\lambda'$. Suppose $\mathscr D=(\T,\Theta,\chi)$ is a PD datum with respect to $z$. By Proposition \ref{P:RCdata}, we have $\chi$ is a $2$-cocycle but not a $2$-coboundary in $\Omega\, u(\mathfrak h)$ and $\rho_z(\Theta)=0$ and $\Phi_z(\chi)=\partial^1(\Theta)$. Set the new datum $\mathscr D'=(\T,\gamma^p\Theta,\gamma\chi)$. By Definition \ref{D:TypeChar}, we have
\begin{align*}
\Phi_{z'}(\gamma\, \chi)&=(\gamma\,\chi)^p-\lambda'(\gamma\,\chi)+\rho_{z'}^{p-1}(\gamma\,\chi)\\
&=\gamma^p\,\chi^p-\gamma^p\lambda\, \chi+\gamma^p\, \rho_z^{p-1}(\chi)\\
&=\gamma^p\left(\chi^p-\lambda \chi+\rho_z^{p-1}(\chi)\right)\\
&=\gamma^p\Phi_z(\chi)\\
&=\partial^1(\gamma^p\,\Theta).
\end{align*}
Moreover, it is clear that $\rho_{z'}(\gamma^p\Theta)=0$ and $\gamma\chi$ is a $2$-cocycle but not a $2$-coboundary in $\Omega\, u(\mathfrak h)$. By Proposition~\ref{P:RCdata}, $\mathscr D'$ is a PD datum with respect to $z'$. Therefore, the map $(\Theta,\chi)\mapsto (\gamma^p \Theta,\gamma \chi)$ induces a map from $\mathcal H^2(\T)$ with respect to $z$ to $\mathcal H^2(\T)$ with respect to $z'$. One can check that such map is compatible with the $\Aut(T)$-action and it is a bijection with inverse map given by replacing $\gamma$ with $\gamma^{-1}$.
\end{proof}

                                                                                                                                                                                                                                                                                                                                                                                                                                                                                                                                                                                                                                                                                                                                                                                                                                                                                                                                                                                                                                                                                                                                                                                                                                                                                                                                                                                                                                                                                                                                     \begin{deff}\label{D:AdPer}                                                                                                                                                                                                                                                                                                                                                                                                                                                                                                                                                                                                                                                                                                                                                                                                                                                                                                                                                                                                                                                                                                                                                                                                                                                                                                                                                                                                                                                                                                                                 Let $\T=(\mathfrak g,\mathfrak h,\rho)$ be an abelian type. Suppose $0\neq z\in \mathfrak g$. By Proposition \ref{P:Cobar} (ii), $\Ker\, \rho_z\supseteq \Img\, \Phi_z$ on $\mathfrak h$. We say $\T$ is \emph{permissible} with respect to $z$ if $\Ker\, \rho_z=\Img\, \Phi_z$ on $\mathfrak h$, that is, whenever $\rho_z(s)=0$ for some $s\in \mathfrak h$, there exists an $r\in \mathfrak h$ such that $\Phi_z(r)=s$.
\end{deff}

By the following lemma, the definition of a permissible abelian type $\T=(\mathfrak g,\mathfrak h,\rho)$ does not depend on the choice of the basis $z$ of $\mathfrak g$. So we can say $\T$ is permissible without specifying the basis of $\mathfrak g$. 

\begin{lem}\label{L:PerT}
Let $\T=(\mathfrak g,\mathfrak h,\rho)$ be an abelian type, and let $z$ and $z'$ be two different bases of $\mathfrak g$. Then $\Ker\, \rho_z=\Img\, \Phi_z$ on $\mathfrak h$ if and only if $\Ker\, \rho_{z'}=\Img\, \Phi_{z'}$ on $\mathfrak h$. 
\end{lem}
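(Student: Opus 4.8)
The plan is to reduce everything to a single scaling relation, exploiting that $\mathfrak g$ is one-dimensional. Since $\dim\mathfrak g=1$, the two bases differ by a nonzero scalar, say $z'=\gamma z$ with $\gamma\in\field^\times$. Because $\rho$ is linear, $\rho_{z'}=\gamma\,\rho_z$, and as $\gamma\neq 0$ this gives $\Ker\,\rho_{z'}=\Ker\,\rho_z=:K$ at once; note that $K$ is a genuine $\field$-subspace of $\mathfrak h$, being the kernel of a $\field$-linear endomorphism. Recall also from Proposition~\ref{P:Cobar}(ii) (as remarked in Definition~\ref{D:AdPer}) that $\Img\,\Phi_z\subseteq\Ker\,\rho_z$ and $\Img\,\Phi_{z'}\subseteq\Ker\,\rho_{z'}$ hold unconditionally, so only the reverse inclusions are at issue.

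Next I would record how $\Phi$ transforms under the change of basis. Writing $z^{[p]}=\lambda z$, one has ${z'}^{[p]}=(\gamma z)^{[p]}=\gamma^p\lambda z=\gamma^{p-1}\lambda\,z'$, so the scalar attached to $z'$ in Definition~\ref{D:TypeChar} is $\lambda'=\gamma^{p-1}\lambda$, equivalently $\gamma\lambda'=\gamma^p\lambda$. For $r\in\mathfrak h$ the three terms defining $\Phi_{z'}(\gamma r)$ then scale uniformly: $(\gamma r)^p=\gamma^p r^p$ in $u(\mathfrak h)$, $\lambda'(\gamma r)=\gamma^p\lambda\,r$, and $\rho_{z'}^{p-1}(\gamma r)=\gamma^{p}\rho_z^{p-1}(r)$. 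This is exactly the degree-one analogue of the computation already carried out for $\chi$ in the proof of Proposition~\ref{P:UH2}, and it yields $\Phi_{z'}(\gamma r)=\gamma^p\,\Phi_z(r)$ for every $r\in\mathfrak h$. Running $r$ over $\mathfrak h$ (equivalently $s=\gamma r$ over $\mathfrak h$) gives the key identity $\Img\,\Phi_{z'}=\gamma^p\,\Img\,\Phi_z$.

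The hard part, and the only genuinely nontrivial point, is that $\Phi_z$ is merely $p$-semilinear (through the Frobenius term $r\mapsto r^{[p]}$), so $\Img\,\Phi_z$ is a priori only an additive $\mathbb F_p$-subgroup of $\mathfrak h$; there is no reason for the dilation by $\gamma^p$ to preserve such a set, and hence $\Img\,\Phi_{z'}$ and $\Img\,\Phi_z$ need not coincide in general. The permissibility hypothesis is precisely what removes this obstruction. Indeed, assuming $\Ker\,\rho_z=\Img\,\Phi_z$, the image $\Img\,\Phi_z$ equals $K$ and is therefore a $\field$-subspace, so $\gamma^p K=K$; combining this with the identity of the previous paragraph, $\Img\,\Phi_{z'}=\gamma^p\,\Img\,\Phi_z=\gamma^p K=K=\Ker\,\rho_{z'}$, i.e.\ $\T$ is permissible with respect to $z'$. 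The converse is identical after interchanging the roles of $z$ and $z'$ (replacing $\gamma$ by $\gamma^{-1}$), which completes the equivalence.
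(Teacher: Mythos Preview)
Your proof is correct and follows essentially the same route as the paper's: both write $z'=\gamma z$, invoke $\Img\,\Phi_{z'}\subseteq\Ker\,\rho_{z'}$ from Proposition~\ref{P:Cobar}(ii), and use the scaling identity $\Phi_{z'}(\gamma r)=\gamma^p\,\Phi_z(r)$ (the degree-one case of the computation in Proposition~\ref{P:UH2}) to transport the reverse inclusion from $z$ to $z'$. The only cosmetic difference is that the paper argues elementwise (given $s\in\Ker\,\rho_{z'}$, produce a $\Phi_{z'}$-preimage) whereas you package the same computation as the set identity $\Img\,\Phi_{z'}=\gamma^p\,\Img\,\Phi_z$ and then observe that $\Img\,\Phi_z=K$ is a $\field$-subspace under the hypothesis; the content is identical.
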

\begin{proof}
It suffices to show that $\Ker\, \rho_{z'}=\Img\, \Phi_{z'}$ if $\Ker\, \rho_z=\Img\, \Phi_z$ on $\mathfrak h$. Note that $\Ker\, \rho_{z'}\supseteq \Img\, \Phi_{z'}$ on $\mathfrak h$ by Proposition \ref{P:Cobar} (ii). We write $z'=\gamma z$, for some $\gamma \in \field^\times$. Now let $s \in \Ker\, \rho_{z'}$, that is, $s \in \mathfrak h$ and $\rho_{z'}(s)=0$. Clearly, $\rho_z(\gamma^{-p}s)=0$. Hence by assumption, there exists $r\in \mathfrak h$ such that $\Phi_z(r)=\gamma^{-p}s$. Use the same calculation in Proposition \ref{P:UH2}, we get: 
$\Phi_{z'}(\gamma\, r)=\gamma^p\, \Phi_z(r)=\gamma^p\gamma^{-p}s=s$. So $s \in \Img \,\Phi_{z'}$, which shows the other side of the inclusion. 
\end{proof}

\begin{prop}
Let $\T=(\mathfrak g,\mathfrak h,\rho)$ be an abelian type. If one of the following holds
\begin{itemize}
\item[(i)] $\mathfrak h$ is a torus;
\item[(ii)] $\mathfrak h$ is $p$-nilpotent and $\mathfrak g$ is a torus,
\end{itemize}
then $\T$ is permissible.
\end{prop}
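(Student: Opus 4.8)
The plan is to reduce, in both cases, to a surjectivity statement for a single $p$-semilinear operator on $\Ker\,\rho_z$. Fix a basis $z$ of $\g$ with $z^{[p]}=\lambda z$; by Lemma~\ref{L:PerT} it suffices to verify permissibility for this one $z$. Recall that for $r\in\h\subseteq u(\h)^+=\Omega^1 u(\h)$ one has $r^p=r^{[p]}$ in $u(\h)$, so that $\Phi_z$ restricts to the additive map $\Phi_z\colon\h\to\h$, $\Phi_z(r)=r^{[p]}-\lambda r+\rho_z^{p-1}(r)$. Write $K:=\Ker\,\rho_z$. By Proposition~\ref{P:Cobar}(ii) we have $\Img\,\Phi_z\subseteq K$; in particular $\Phi_z(K)\subseteq K$, and $K$ is invariant under the $[p]$-map, since $r^{[p]}=\Phi_z(r)+\lambda r\in K$ for $r\in K$. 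Moreover, for $r\in K$ one has $\rho_z^{p-1}(r)=\rho_z^{p-2}(\rho_z(r))=0$, so $\Phi_z$ restricted to $K$ is the $p$-semilinear operator $\psi\colon K\to K$, $\psi(r)=r^{[p]}-\lambda r$. The key observation is that it now suffices to prove that $\psi$ is surjective: indeed then $K=\psi(K)\subseteq\Img\,\Phi_z\subseteq K$, forcing $\Img\,\Phi_z=K=\Ker\,\rho_z$, i.e. $\T$ is permissible.

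I would treat case (ii) first, as it is the cleaner of the two. Since $\g$ is a torus and $\dim\g=1$, the element $z$ cannot be $p$-nilpotent, so $\lambda\neq 0$. Let $F$ denote the $p$-map on $K$; since $\h$ is $p$-nilpotent and $K$ is $[p]$-invariant, $F$ is a nilpotent additive operator, say $F^N=0$. Then $\psi=-\lambda\,(\mathrm{id}-\lambda^{-1}F)$, where $G:=\lambda^{-1}F$ satisfies $G^k=\lambda^{-(1+p+\cdots+p^{k-1})}F^k$ and hence $G^N=0$. The telescoping identity $(\mathrm{id}-G)\sum_{k=0}^{N-1}G^k=\mathrm{id}-G^N=\mathrm{id}$ remains valid for additive maps, so $\mathrm{id}-G$ is bijective; as $\lambda\neq 0$, $\psi$ is a composite of bijections and is therefore surjective.

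For case (i), $K$ is a restricted subalgebra of the torus $\h$ that is closed under the $[p]$-map and contains no nonzero $p$-nilpotent element, so $K$ is itself a torus. Consequently its $p$-map $F$ is injective, hence bijective (a $p$-semilinear injection on a finite-dimensional space over $\field$ is onto), and since $\field$ is algebraically closed $K$ admits a toral $\field$-basis $t_1,\dots,t_m$ with $t_i^{[p]}=t_i$. Writing $r=\sum_i a_i t_i$ we compute $\psi(r)=\sum_i(a_i^p-\lambda a_i)\,t_i$, so solving $\psi(r)=\sum_i b_i t_i$ amounts to solving the Artin--Schreier-type equations $a_i^p-\lambda a_i=b_i$ coordinatewise; each has a solution in $\field$ because $\field$ is algebraically closed. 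Thus $\psi$ is surjective, completing case (i).

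The main obstacle is conceptual rather than computational: because $\psi(r)=r^{[p]}-\lambda r$ is only $p$-semilinear, not $\field$-linear, ordinary rank--nullity does not apply, and surjectivity must be extracted from extra structure. In case (ii) this is supplied by nilpotence of the $p$-map via a finite geometric series, while in case (i) the essential input is the algebraic closedness of $\field$, entering through solvability of the Artin--Schreier equations over a toral basis of $K$; identifying the correct reduced operator $\psi$ and recognizing that $K$ inherits the needed ($p$-invariance, torus, or nilpotence) structure is the crux of the argument.
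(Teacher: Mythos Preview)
Your proof is correct and follows essentially the same strategy as the paper: reduce permissibility to surjectivity of the $p$-semilinear map $r\mapsto r^{[p]}-\lambda r$ on $\Ker\rho_z$, then verify this via nilpotence (case (ii)) or Artin--Schreier solvability over the algebraically closed field (case (i)). The paper is marginally quicker in case (i): instead of restricting to $K$ and arguing that $K$ is again a torus with a toral basis, it notes that condition~(iv) of Definition~\ref{D:ARRLie} together with surjectivity of $[p]$ on a torus forces $\rho_z=0$ outright, so $K=\h$ and one works on all of $\h$ directly; your treatment of case (ii) via the geometric series in the nilpotent operator $\lambda^{-1}F$ is in fact more careful than the paper's one-line computation.
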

\begin{proof}
By Lemma \ref{L:PerT}, we can choose any basis $z$ of $\mathfrak g$. Write $z^{[p]}=\lambda\, z$ for some $\lambda\in \field$. (i) Assume $\mathfrak h$ is a torus. By \cite{Hoch54}, we know $[p]: \mathfrak h\to \mathfrak h$ is surjective. Hence $\rho_z=0$ according to Definition \ref{D:ARRLie} (iv). So $\Ker \rho_z=\mathfrak h$. Now $\Phi_z(r)=r^p-\lambda r+\rho_z^{p-1}(r)=r^p-\lambda r$, for any $r\in \mathfrak h$. One sees easily that $\Img \Phi_z=\mathfrak h$ as the base field $\field$ is algebraically closed. (ii) Assume $\mathfrak h$ is $p$-nilpotent and $\mathfrak g$ is a torus. Without loss of generality, we can take $\lambda=1$. It is easy to check that if $\rho_z(r)=0$ for some $r\in \mathfrak h$, then $\Phi_z(-r)=(-r)^p-(-r)+\rho_z^{p-1}(-r)=r$, which completes the proof.
\end{proof}

\begin{thm}\label{TH:HopfO}
For any abelian type $\T$ of rank $n$, the following are naturally bijective.
\begin{itemize}
\item[(i)] The set of isomorphism classes of quantum $p$-groups of type $\T$ in $\mathscr X(n)$.
\item[(ii)] The set of isomorphism classes of primitive deformations of $u(\T)$.
\item[(iii)] $\mathcal H^2(\T)/\Aut(\T)$.
\item[(iv)] $\mathscr A^+(\T)/\Aut(\T)$, provided $\T$ is permissible; cf.~Definition~\ref{D:AdPer}. 
\end{itemize}
\end{thm}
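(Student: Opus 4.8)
The plan is to prove a chain of natural bijections, establishing each arrow and then composing. The equivalences (i)$\Leftrightarrow$(ii) and (iii)$\Leftrightarrow$(iv) are already essentially in hand from the preceding development, so the real content is wiring them together and handling the permissible case in (iv).

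First I would establish (i)$\Leftrightarrow$(ii). By Remark~\ref{rmk:isotype}, every quantum $p$-group in $\mathscr X(n)$ of type $\T$ arises as a primitive deformation $u_z(\mathscr D)$ of $u(\T)$ for a PD datum $\mathscr D$, and conversely every PD of $u(\T)$ lands in $\mathscr X(n)$ by the comment following Definition~\ref{D:FirstPCD}. Two such deformations are isomorphic as quantum $p$-groups precisely when they are isomorphic as Hopf algebras (both being finite-dimensional connected Hopf algebras), so the two notions of isomorphism class coincide tautologically. The only subtlety is to confirm that the type $\T$ is an isomorphism invariant, which is exactly Lemma~\ref{rmk:isotype} part (i); this lets us fix $\T$ on both sides.

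Next I would establish (ii)$\Leftrightarrow$(iii). The set $\mathcal H^2(\T)$ consists of equivalence classes $[(\Theta,\chi)]$ of PD data with respect to a fixed $z$, and by Lemma~\ref{L:PCDAct}(ii) the group $\Aut(\T)$ acts on $\mathcal H^2(\T)$. I claim the assignment $[(\Theta,\chi)]\mapsto u_z(\mathscr D)$ descends to a bijection from $\mathcal H^2(\T)/\Aut(\T)$ onto isomorphism classes of primitive deformations. The equivalence relation $\sim$ of Definition~\ref{D:EquiR} captures exactly the freedom $\Theta'-\Theta=\Phi_z(s)$, $\chi'-\chi=\partial^1(s)$, which corresponds to a change of the deforming element $z\mapsto z+s$ for $s\in u(\mathfrak h)^+$ and thus yields identical Hopf algebras; this is the content of \cite[Proposition~5.5]{XWang} cited in Definition~\ref{D:EquiR}. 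Quotienting further by $\Aut(\T)$ accounts for isomorphisms that do \emph{not} fix the identification of $\T$ with itself, using the action computed in Lemma~\ref{L:PCDAct}. Well-definedness and independence of the choice of $z$ are guaranteed by Proposition~\ref{P:UH2}.

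Finally, for (iii)$\Leftrightarrow$(iv) under the permissibility hypothesis, I would exhibit a canonical $\Aut(\T)$-equivariant bijection between $\mathcal H^2(\T)$ and the affine set $\mathscr A^+(\T)$ (defined via $\Ker\,\rho_z=\Img\,\Phi_z$ in Definition~\ref{D:AdPer}). The idea is that once $\T$ is permissible, condition~(iii) of Proposition~\ref{P:RCdata}, namely $\Phi_z(\chi)=\partial^1(\Theta)$, together with condition~(ii), $\rho_z(\Theta)=0$, can be solved so that $\Theta$ is essentially determined by $\chi$ up to the coboundary ambiguity already quotiented out by $\sim$; permissibility is precisely what guarantees the solvability, letting us replace the pair $(\Theta,\chi)$ by data recording only the cohomology class of $\chi$ together with a residual affine parameter. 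I would check this correspondence respects both $\sim$ and the $\Aut(\T)$-action, so it descends to the quotients.

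I expect the main obstacle to be (ii)$\Leftrightarrow$(iii): one must verify carefully that Hopf-algebra isomorphism classes correspond bijectively to $\Aut(\T)$-orbits, which requires ruling out "hidden'' isomorphisms not visible at the level of $(\Theta,\chi)$ and confirming that the matched-pair machinery of \cite{XWang} faithfully translates Hopf isomorphisms into pairs $(\phi_1,\phi_2)\in\Aut(\T)$ combined with the $\sim$-freedom. The bookkeeping of how a general Hopf isomorphism decomposes into an automorphism of $\T$ and a change of deforming element is where the argument is most delicate, and I would lean on Lemma~\ref{rmk:isotype} and Proposition~\ref{P:UH2} to control it.
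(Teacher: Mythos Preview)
Your proposal is correct and follows the same architecture as the paper's proof. The paper is terser than your sketch: for (i)$\Leftrightarrow$(ii) it points to Definition~\ref{D:FirstPCD} and the comment following it; for the step you correctly flag as most delicate---that Hopf isomorphisms of primitive deformations correspond exactly to $\Aut(\T)$-orbits in $\mathcal H^2(\T)$---it simply invokes \cite[Theorem~0.6]{XWang} rather than unpacking the decomposition of an isomorphism into an $\Aut(\T)$-move plus a $\sim$-equivalence; and for (iii)$\Leftrightarrow$(iv) it defers forward to Theorem~\ref{T:BPT}, whose proof via Lemma~\ref{L:BIH2A2} is precisely the argument you outline (permissibility forces $\Theta$ to be determined by $[\chi]$ modulo~$\sim$, so the map $[(\Theta,\chi)]\mapsto P$ with $[\chi]=[\chi_P]$ is a bijection onto $\mathscr A^+(\T)$).

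One small slip worth correcting: your parenthetical says $\mathscr A^+(\T)$ is ``defined via $\Ker\,\rho_z=\Img\,\Phi_z$'', but that equation is the definition of \emph{permissible}, not of $\mathscr A^+(\T)$. The set $\mathscr A^+(\T)$ is the set of nonzero admissible points $P$ in $\mathbb A_\field^{n(n+1)/2}$, i.e., those for which some $(\T,s,\chi_P)$ is a PD datum; permissibility is the extra hypothesis that makes the map $\mathcal H^2(\T)\to\mathscr A^+(\T)$ injective.
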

\begin{proof}
The bijection between (i) and (ii) comes from Definition \ref{D:FirstPCD} and the comment below it. The $\Aut(\T)$-action on $\mathcal H^2(\T)$ was given in Lemma \ref{L:PCDAct}. It follows from \cite[Theorem~0.6]{XWang} that (i) and (iii) are bijective. Finally, by later Theorem \ref{T:BPT}, (iii) and (iv) are bijective whenever $\T$ is permissible. 
\end{proof}


\section{Realization of isomorphism classes in affine spaces}
\label{sec:realization}

Throughout this section, we assume the characteristic of $\field$ is $p>2$. We will set, for a permissible abelian type $\T$ of rank $n$, a bijection between $\mathcal H^2(\T)$ and a subset of the affine space $\mathbb A^{n(n+1)/2}_\field$, which is compatible with the $\Aut(\T)$-action. In the remaining of the section, let $\T=(\mathfrak g,\mathfrak h,\rho)$ be an abelian type of rank $n$. We fix bases $z$ of $\mathfrak g$ and $\{x_i\}_{1\le i\le n}$ of $\mathfrak h$. Note that the abelian restricted Lie algebras $\mathfrak g$ and $\mathfrak h$ are determined by their restricted maps on the bases. Then, the restricted maps of $\mathfrak g$ and $\mathfrak h$ can be denoted by a scalar $\lambda$ and a $n\times n$ matrix $R$, respectively, such that 
\[
z^{[p]}=\lambda z,\ \mbox{and}\
\begin{pmatrix}
x_1^{[p]}\\ \vdots \\ x_n^{[p]}
\end{pmatrix}
=R
\begin{pmatrix}
x_1\\ \vdots \\ x_n
\end{pmatrix}
.\]
Since $\mathfrak g$ is spanned by $z$, any algebraic representation $\rho$ of $\mathfrak g$ on $\mathfrak h$ is uniquely determined by $\rho_z\in \End_\k(\mathfrak h)$, which can be realized by a $n\times n$ matrix $M$ such that 
\[
\rho_z\begin{pmatrix} x_1\\ \vdots \\ x_n\end{pmatrix}
=M \begin{pmatrix} x_1\\ \vdots \\ x_n\end{pmatrix}.
\]
By Definition \ref{D:Type} (iii), it holds that $\rho_z(x_i^{[p]})=0$, for all $1\le i\le n$, and $(\rho_z)^p=\rho_{(z^{[p]})}=\rho_{(\lambda z)}=\lambda \rho_z$.
In terms of matrices, these conditions are equivalent to 
\[
M\begin{pmatrix} x_1^{[p]}\\ \vdots \\ x_n^{[p]}\end{pmatrix}=MR\begin{pmatrix} x_1\\ \vdots \\ x_n\end{pmatrix}=0,\ \mbox{and}\ M^p=\lambda M.
\]
Therefore, we have the following lemma.

\begin{lem}
Any abelian type $\T=(\mathfrak g,\mathfrak h,\rho)$ can be determined by a triple $(\lambda,R,M)$ satisfying $MR=0$ and $M^p=\lambda M$.
\end{lem}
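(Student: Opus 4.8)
The plan is to turn the discussion preceding the statement into an explicit two-way correspondence between abelian types equipped with the fixed bases and triples $(\lambda,R,M)$ subject to the two relations. First I would record, as above, the scalar $\lambda$ from $z^{[p]}=\lambda z$, the matrix $R$ from $x_i^{[p]}=\sum_j R_{ij}x_j$, and the matrix $M$ from $\rho_z$. Because $\dim\mathfrak g=1$, the bracket on $\mathfrak g$ is automatically zero and its restricted structure is carried entirely by $\lambda$; because $\mathfrak h$ is abelian, its bracket vanishes and, by the $p$-semilinearity and additivity of the restricted map (Definition~\ref{D:RLie}(i),(ii), all the $s_i$-terms being brackets and hence zero), its restricted structure is carried entirely by the basis values $x_i^{[p]}$, i.e.\ by $R$. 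Since $\mathfrak g$ is spanned by $z$, the representation $\rho$ is carried entirely by $\rho_z=M$. Thus the triple $(\lambda,R,M)$ determines $\T$ relative to the chosen bases.

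Next I would isolate which of the four axioms for an algebraic representation (Definition~\ref{D:ARRLie}) are nonvacuous. Conditions (i) and (iii) hold automatically: $[z,z]=0$ makes (i) read $0=\rho_z\rho_z-\rho_z\rho_z$, and $[a,b]=0$ in $\mathfrak h$ forces both sides of (iii) to vanish. Condition (ii), via $z^{[p]}=\lambda z$, becomes $(\rho_z)^p=\rho_{(\lambda z)}=\lambda\rho_z$, i.e.\ $M^p=\lambda M$. Condition (iv) reads $\rho_z(a^{[p]})=\rho_z(a)(\ad a)^{p-1}$; since $\ad a=0$ on the abelian $\mathfrak h$ and $p-1\ge 1$, the right-hand side vanishes, so (iv) is equivalent to $\rho_z(x_i^{[p]})=0$ for all $i$, which is exactly the relation $MR=0$ computed above. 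Hence the axioms defining $\T$ collapse to precisely $MR=0$ and $M^p=\lambda M$.

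For the converse I would run this backwards: from any triple $(\lambda,R,M)$ with $MR=0$ and $M^p=\lambda M$, build $\mathfrak g=\field z$ with $z^{[p]}=\lambda z$, equip $\mathfrak h=\bigoplus_i \field x_i$ with the zero bracket and the restricted map determined $p$-semilinearly by $R$, and set $\rho_z=M$. For any $\lambda$ and any $R$ these are genuine abelian restricted Lie algebras, since the sole consistency requirement on a $[p]$-operation, $\ad(x_i^{[p]})=(\ad x_i)^p$, is the vacuous $0=0$ here; and the two relations feed the computation of the previous paragraph in reverse to give (i)--(iv), so $(\mathfrak g,\mathfrak h,\rho)$ is an abelian type realizing $(\lambda,R,M)$. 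I expect the only real work to be bookkeeping: confirming that an arbitrary $R$ imposes no hidden constraint (so nothing is lost in passing between $\T$ and its triple) and keeping the matrix conventions straight so that $\rho_z(x_i^{[p]})=0$ is recorded as $MR=0$ rather than its transpose.
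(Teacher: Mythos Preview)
Your proposal is correct and follows essentially the same approach as the paper: the paper's lemma is stated without a separate proof, the word ``Therefore'' indicating that it is a summary of the preceding paragraph, which derives $MR=0$ from Definition~\ref{D:ARRLie}(iv) and $M^p=\lambda M$ from Definition~\ref{D:ARRLie}(ii) exactly as you do. Your write-up is in fact more complete than the paper's, since you explicitly verify that conditions (i) and (iii) are vacuous and you supply the converse direction showing that any triple $(\lambda,R,M)$ with the two relations gives rise to a genuine abelian type.
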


Now for any $\phi\in\Aut(\T)$, by Definition \ref{D:2TIsom}, we will identify $\phi=(\phi_1,\phi_2)$ with $(\gamma,\phi_2)$ or $(\gamma,G)$ such that 
\begin{align}\label{E:RepA}
\phi_1(z)=\gamma\, z,\ \text{and}\ \phi_2\begin{pmatrix} x_1\\ \vdots \\ x_n\end{pmatrix}=G\begin{pmatrix} x_1\\ \vdots \\ x_n\end{pmatrix},
\end{align}
for some $\gamma \in \field^\times$ and some $n\times n$ invertible matrix $G$. It is worthy to point out that for $\phi=(\gamma,G)$ to be an automorphism of $\T=(\lambda,R,M)$, the sufficient and necessary conditions are  
\begin{align}\label{E:RepC}
\widehat{G}R=RG,\ \gamma^p\lambda=\gamma\lambda,\ M=\gamma G^{-1}MG,
\end{align}
where $\widehat{G}$ is constructed from $G$ by taking $p$-th power of every entry of $G$. We also point out that the composition of two automorphisms $\phi,\phi'\in \Aut(\T)$ is given by 
\begin{align}\label{E:ComG}
\phi'\circ \phi=(\gamma'\gamma,\phi_2'\circ \phi_2)=(\gamma'\gamma,GG'),
\end{align}
since matrices act on bases by left multiplication. The following lemma is straightforward using the Equation \eqref{E:ComG} above. 

\begin{lem}\label{L:AutAction}
There is a group embedding from $\Aut(\T)$ into $\text{GL}_{n(n+1)/2}(\field)$ via, for any $P=(a_{ij},b_k)_{1\le i<j\le n,1\le k\le n}\in \mathbb A_\field^{n(n+1)/2}$ and $\phi=(\gamma,G)\in \Aut(\T)$,
\begin{align*}
\phi(a_{ij})=\left(\sum_{1\le r<s\le n} \gamma \, \text{det} \begin{bmatrix}G_{ri}& G_{rj}\\ G_{si} & G_{sj}\end{bmatrix} \right)a_{ij}, \quad and\ \phi(b_k)=\sum_{1\le s\le n}\gamma^{1/p}\,G_{sk}\, b_s.
\end{align*}
\end{lem}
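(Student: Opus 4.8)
The plan is to verify the two defining conditions of a group homomorphism: first that the proposed formula for $\phi$ acting on coordinates $(a_{ij}, b_k)$ does define a group action (equivalently, an actual homomorphism into $\GL_{n(n+1)/2}(\field)$), and second that this homomorphism is injective. The key observation driving the whole argument is that the coordinates $a_{ij}$ (indexed by $1 \le i < j \le n$) are meant to record the coefficients of a $2$-cochain living in $\Lambda^2(\mathfrak h)$, while the coordinates $b_k$ record coefficients in $\Bock(\mathfrak h)$; by Lemma~\ref{L:HochUH} and Corollary~\ref{C:SLinear}, these two pieces sit inside $\HL^2(\Omega\,u(\mathfrak h))$ and are separately preserved by the relevant maps, which is why the action splits into the displayed exterior-square part and the Frobenius-twisted linear part.

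First I would make precise how $\Aut(\T)$ acts on the ambient cohomology. Recall from Lemma~\ref{L:PCDAct}(ii) that $\phi = (\gamma, \phi_2)$ sends $(\Theta,\chi) \mapsto (\gamma^p \phi_2(\Theta), \gamma\,(\phi_2 \otimes \phi_2)(\chi))$. Under the identification of $\mathcal H^2(\T)$ with coordinates in $\mathbb A_\field^{n(n+1)/2}$, the $\chi$-part decomposes according to $\HL^2 \cong \Lambda^2(\mathfrak h) \oplus \Bock(\mathfrak h)$. On the $\Lambda^2(\mathfrak h)$ summand, applying $\gamma\,(\phi_2 \otimes \phi_2)$ with $\phi_2$ given by the matrix $G$ (so $\phi_2(x_i) = \sum_r G_{ri} x_r$ by Equation~\eqref{E:RepA}) sends the basis wedge $x_i \wedge x_j$ to $\gamma \sum_{r<s} \det\begin{bmatrix} G_{ri} & G_{rj}\\ G_{si} & G_{sj}\end{bmatrix} x_r \wedge x_s$, which after dualizing to coefficients gives exactly the stated formula for $\phi(a_{ij})$. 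On the $\Bock(\mathfrak h)$ summand, I would use the semilinearity from Corollary~\ref{C:SLinear}(i), namely $\Bock(\alpha r) = \alpha^p \Bock(r)$, to see that the $b_k$-coefficients transform with a Frobenius twist; combining this with the scalar $\gamma$ and solving for the coefficient change produces the $\gamma^{1/p} G_{sk}$ factor in $\phi(b_k)$.

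Next I would check that $\phi \mapsto (\phi(a_{ij}), \phi(b_k))$ really is a homomorphism, i.e.\ that composing two automorphisms $\phi' \circ \phi$ corresponds to multiplying the two associated linear maps on $\mathbb A_\field^{n(n+1)/2}$. Here Equation~\eqref{E:ComG} is the crucial input: since matrices act on bases by left multiplication, $\phi' \circ \phi = (\gamma'\gamma, GG')$, and the functoriality of $\Lambda^2$ together with the multiplicativity of determinants (Cauchy--Binet, in effect) guarantees that the exterior-square blocks compose correctly; the scalar twists $\gamma$ and $\gamma^{1/p}$ likewise multiply. Once the homomorphism property holds, injectivity follows because the formula already determines both $G$ and $\gamma$ from the action: the $b_k$-block recovers $G$ up to the scalar $\gamma^{1/p}$, and using the $\Der$ condition and the representation constraints~\eqref{E:RepC} one recovers $\gamma$ and hence $G$ itself, so the kernel is trivial.

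The main obstacle I anticipate is bookkeeping the two scalar twists consistently. The exterior-square part carries a single factor of $\gamma$ while the Bockstein part carries $\gamma^{1/p}$, and one must confirm these are mutually compatible under composition — that is, that $(\gamma'\gamma)^{1/p} = \gamma'^{1/p}\gamma^{1/p}$ and that the $p$-th power appearing in $\widehat{G}$ (from Equation~\eqref{E:RepC}) does not introduce any discrepancy when verifying that $\phi$ is a legitimate automorphism rather than merely a linear map. Since the statement is explicitly labeled ``straightforward using Equation~\eqref{E:ComG},'' I expect no genuine difficulty beyond this careful tracking of Frobenius twists; the determinant identity and the $\Lambda^2$ functoriality are standard, and the well-definedness of the action on $\mathcal H^2(\T)$ has already been established in Lemma~\ref{L:PCDAct}.
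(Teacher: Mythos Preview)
Your approach is essentially the same as the paper's: the paper says only that the lemma is ``straightforward using Equation~\eqref{E:ComG},'' and you correctly identify that composition formula as the core of the homomorphism verification, while your cohomological reading of the $\Lambda^2$ and $\Bock$ pieces is helpful motivation the paper omits. One caution: your injectivity sketch (recovering $(\gamma,G)$ from $\gamma^{1/p}G^\top$ together with the $\gamma\det(G)$ block) is more delicate than you indicate and in general leans on the constraints~\eqref{E:RepC}; the paper gives no argument for this point either, and for all later applications only the well-definedness of the action, not strict injectivity, is used.
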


By Remark~\ref{rmk:isotype}, isomorphic quantum-$p$ groups in $\mathscr X(n)$ come from isomorphic abelian types. 
Hence, in order to classify $\mathscr X(n)$, we can first classify all possible types in the following sense. 

\begin{lem}\label{L:IsomCT}
Suppose two types $\T=(\mathfrak g,\mathfrak h,\rho)$ and $\T'=(\mathfrak g,\mathfrak h,\rho')$ are given by triples $(\lambda,R,M)$ and $(\lambda, R,M')$, respectively. Then, $\T$ is isomorphic to $\T'$ if and only if there exist an invertible $n\times n$ matrix $G$ and a nonzero scalar $\gamma$ such that 
\begin{align*}
\widehat{G}R=RG,\ \gamma^p\lambda=\gamma\lambda,\ M'=\gamma G^{-1}MG,
\end{align*}
where $\widehat{G}$ is constructed from $G$ by taking $p$-th power of every entry of $G$.  
\end{lem}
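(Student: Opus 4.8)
The plan is to unwind the definition of isomorphism of abelian types (Definition~\ref{D:2TIsom}) in terms of the chosen bases and the matrix data $(\lambda,R,M)$ and $(\lambda,R,M')$. Since $\T$ and $\T'$ share the same $\mathfrak g$ and $\mathfrak h$ (with the same restricted maps $\lambda$ and $R$), an isomorphism $\phi=(\phi_1,\phi_2)$ consists of a restricted Lie algebra isomorphism $\phi_1\colon \mathfrak g\to\mathfrak g$ and $\phi_2\colon\mathfrak h\to\mathfrak h$. Writing $\phi_1(z)=\gamma z$ for some $\gamma\in\field^\times$ and $\phi_2$ as left multiplication by an invertible matrix $G$ as in Equation~\eqref{E:RepA}, I would translate each of the three defining requirements into a matrix identity.

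First I would handle the condition that $\phi_1$ is a restricted Lie algebra automorphism of $\mathfrak g$: applying $\phi_1$ to $z^{[p]}=\lambda z$ and using $\phi_1(z)^{[p]}=\gamma^p z^{[p]}=\gamma^p\lambda z$ together with $\phi_1(\lambda z)=\gamma\lambda z$ yields exactly $\gamma^p\lambda=\gamma\lambda$. Next I would encode that $\phi_2$ is a restricted Lie algebra automorphism of $\mathfrak h$: since $\mathfrak h$ is abelian, the only nontrivial condition is compatibility with the restricted map, i.e.\ $\phi_2(x^{[p]})=\phi_2(x)^{[p]}$ for all $x\in\mathfrak h$. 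Evaluating on the basis, the left-hand side is $GR\,(x_1,\dots,x_n)^{\mathsf T}$ while the right-hand side, because the $p$-th power map is $p$-semilinear and $\mathfrak h$ is abelian, produces $\widehat{G}\,$ applied to the restricted images, giving $\widehat{G}R\,(x_1,\dots,x_n)^{\mathsf T}$; hence $GR=\widehat{G}R$. Here I must be careful about the direction of the semilinearity and about which side $\widehat{G}$ lands on—this bookkeeping is exactly what produces the relation $\widehat{G}R=RG$ after accounting for the fact that $\phi_2$ acts as $G$ by left multiplication on the column of basis vectors, so that the matrix of $\phi_2\circ[p]$ versus $[p]\circ\phi_2$ must be compared as operators. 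I would reconcile this with the statement's form $\widehat{G}R=RG$ by tracking the convention, already fixed in the paragraph preceding Lemma~\ref{L:AutAction}, that matrices act on bases by left multiplication.

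The remaining and most delicate step is the commutativity of the diagram in Definition~\ref{D:2TIsom}, which expresses that $\phi$ intertwines $\rho$ and $\rho'$. Chasing the diagram gives $\phi_2\circ\rho_{\phi_1^{-1}(z)}=\rho'_z\circ\phi_2$ on $\mathfrak h$; since $\phi_1^{-1}(z)=\gamma^{-1}z$ and $\rho$ is linear in the $\mathfrak g$-slot, $\rho_{\gamma^{-1}z}=\gamma^{-1}\rho_z$, so the relation becomes $\gamma^{-1}\phi_2\rho_z=\rho'_z\phi_2$, i.e.\ $\phi_2\rho_z=\gamma\,\rho'_z\phi_2$. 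Translating to matrices under left multiplication, $\rho_z$ is $M$ and $\rho'_z$ is $M'$, and I must again be attentive to the order reversal that left-action-on-bases induces; carrying this through yields $GM=\gamma M'G$, equivalently $M'=\gamma^{-1}G M G^{-1}$, which I would then match to the stated form $M'=\gamma G^{-1}MG$ by re-examining the convention and, if needed, replacing $\gamma$ by its reciprocal or $G$ by its inverse so that the final triple of conditions reads precisely as in Equation~\eqref{E:RepC} specialized to this setting.

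For the converse I would simply reverse the argument: given $G$ and $\gamma$ satisfying the three identities, define $\phi_1(z)=\gamma z$ and $\phi_2$ by the matrix $G$, and verify that these are restricted Lie algebra isomorphisms making the diagram commute, which is immediate once the forward translations are established as equivalences at each step. The main obstacle I anticipate is not conceptual but notational: keeping the semilinearity of the $p$-power map (producing $\widehat{G}$) and the left-multiplication convention for the action of automorphisms on column vectors consistent, so that the derived matrix relations come out in exactly the transposed/inverted form claimed. Everything else reduces to linearity of $\rho$ in its $\mathfrak g$-argument and the abelianness of $\mathfrak h$, which kills all bracket conditions and leaves only the restricted-map compatibilities.
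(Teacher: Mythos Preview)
Your approach is exactly the paper's: unwind Definition~\ref{D:2TIsom} in the fixed bases and translate each compatibility into a matrix identity. The paper's proof is equally terse, simply asserting that the diagram commutativity is equivalent to $M'=\gamma G^{-1}MG$ and that $\phi_2(x_i^{[p]})=\phi_2(x_i)^{[p]}$ gives $\widehat{G}R=RG$, then declaring the converse clear.

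There is, however, a concrete bookkeeping slip in your diagram chase that you should not paper over by ``replacing $\gamma$ by its reciprocal.'' In Definition~\ref{D:2TIsom} the map $\phi_1^{-1}$ is applied on the \emph{bottom} row, feeding into $\rho'$, not $\rho$. Chasing $z\otimes a$ around the square gives $\phi_2(\rho_z(a))=\rho'_{\phi_1^{-1}(z)}(\phi_2(a))=\gamma^{-1}\rho'_z(\phi_2(a))$, i.e.\ $\gamma\,\phi_2\rho_z=\rho'_z\phi_2$, the opposite of what you wrote. Under the left-multiplication-on-basis convention, composition reverses: $\phi_2\circ\rho_z$ has matrix $MG$ and $\rho'_z\circ\phi_2$ has matrix $GM'$, so $\gamma MG=GM'$ yields $M'=\gamma G^{-1}MG$ on the nose. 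Similarly, for the restricted-map compatibility, $\phi_2(x_i^{[p]})$ has matrix $RG$ (apply $[p]$ first, then $\phi_2$) while $\phi_2(x_i)^{[p]}$ has matrix $\widehat{G}R$ (semilinearity turns $G$ into $\widehat{G}$, then apply $R$), giving $\widehat{G}R=RG$ directly---not $GR=\widehat{G}R$ as you wrote. Once you fix these two order reversals the identities fall out without any ad hoc adjustment of $\gamma$ or $G$.
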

\begin{proof}
Suppose $\T$ is isomorphic to $\T'$ via some $\phi=(\phi_1,\phi_2)$ as given in Equation \eqref{E:RepA}. It is direct to check that the commutativity of the diagram in Definition \ref{D:2TIsom} is equivalent to the following matrix multiplication identity 
\begin{align*}
M'=\gamma G^{-1}MG.
\end{align*}
It remains to show the conditions on $G,\gamma$ for them to be automorphisms of $\mathfrak h$ and $\mathfrak g$, respectively. Because $\phi_2$ preserves the restricted map of $\mathfrak h$, one sees that $\phi_2(x_i^{[p]})=\phi_2(x_i)^{[p]}$, for all $1\le i\le n$, which is the same as $\widehat{G}R=RG$. One can similarly obtain $\gamma^p\lambda=\gamma\lambda$ by using the fact that $\phi_1$ is an automorphism of $\mathfrak g$. Since the if part is clear, we complete the proof. 
\end{proof}

\begin{remark}\label{R:H2A3}
Recall Lemma \ref{L:HochUH} that $\HL^2\left(\Omega\, u(\mathfrak h)\right)$ is spanned by $[x_i\otimes x_j]$ and $[\Bock(x_k)]$, for $1\le i<j\le n$ and $1\le k\le n$. For any point $P=(a_{ij},b_k)_{1\le i<j\le n,1\le k\le n}\in \mathbb A^{n(n+1)/2}_\field$, set 
\begin{align}\label{E:Cohomology}
\chi_P:=\sum_{1\le i<j\le n}a_{ij}\, x_i\otimes x_j+\Bock\left(\sum_{1\le k\le n} b_k\,x_k\right)
\end{align}
as a $2$-cocycle in $\Omega\, u(\mathfrak h)$. Hence there is a bijection between $\HL^2(\Omega\, u(\mathfrak h))$ and $\mathbb A^{n(n+1)/2}_\field$ via $[\chi_P]\longleftrightarrow P$. Moreover, nonzero cohomology classes correspond to nonzero points in $\mathbb A^{n(n+1)/2}_\field$.
\end{remark}

\begin{deff}\label{D:AdPer}
A point $P\in \mathbb A^{n(n+1)/2}_\field$ is said to be \emph{admissible} \emph{w.r.t.} the type $\T$ if there exists some $s\in u(\mathfrak h)^+$ such that $\Phi_z(\chi_P)=\partial^1(s)$ and $\rho_z(s)=0$. 
We denote by $\mathscr A^+(\T)$ the set of all nonzero admissible elements in $\mathbb A_\field^{n(n+1)/2}$. 
\end{deff}

\begin{remark}\label{R:APCD}
A point $P\in \mathbb A^{n(n+1)/2}_\field\setminus\{0\}$ is admissible \emph{w.r.t.} a type $\T$ if and only if there exists a PD datum $\mathscr D=(\T,s,\chi_P)$ for some $s\in u(\mathfrak h)^+$ with respect to $z$.
\end{remark}

Next, we establish the $\Aut(\T)$-action on $\mathscr A^+(\T)$. Recall Lemma \ref{L:AutAction} that $\Aut(\T)$ acts on $\mathbb A_\field^{n(n+1)/2}$ by, for any $\phi=(\gamma,G)\in \Aut(\T)$ and $P=(a_{ij},b_k)_{1\le i,j\le n,1\le k\le n}\in \mathbb A^{n(n+1)/2}_\field$,  
\begin{gather*}
\phi(P)=\left(\widetilde{a_{ij}},\widetilde{b_k}\right)_{1\le i,j\le n,1\le k\le n},\\
\text{where } \widetilde{a_{ij}}=\sum_{1\le r<s\le n}\gamma(G_{ri}G_{sj}-G_{rj}G_{si})\, a_{ij},\ \text{and } \widetilde{b_k}=\sum_{1\le s\le n}\gamma^{1/p}\,G_{sk}\, b_s.
\end{gather*}

\begin{lem}\label{L:ActionAT}
Let $P=(a_{ij},b_k)_{1\le i,j\le n,1\le k\le n}\in \mathbb A^{n(n+1)/2}_\field\setminus\{0\}$ be admissible, and $\mathscr D=(\T,\Theta,\chi_P)$ be a PD datum with respect to $z$. For any $\phi=(\gamma,\phi_2)=(\gamma,G)\in \Aut(T)$, set the element
\[\sigma:=\frac{1}{2}\sum_{1\le r<s\le n}\gamma\,a_{rs}\left(\sum_{1\le i<j\le n}2G_{ri}G_{sj}\,x_ix_j+\sum_{1\le i\le n}G_{ri}G_{si}\, x_i^2\right)\]
in $u(\mathfrak h)^+$. Then the following two classes are equivalent in $\mathcal H^2(\T)$
\begin{align*}
\left[(\gamma^p\, \phi_2(\Theta),\gamma\,(\phi_2\otimes \phi_2)(\chi_P))\right]=\left[(\gamma^p\, \phi_2(\Theta)+\Phi_z(\sigma),\chi_{\phi(P)})\right].
\end{align*}
\end{lem}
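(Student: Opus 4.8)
The plan is to read the asserted equality of classes directly off the definition of the equivalence relation $\sim$ on $\mathcal H^2(\T)$ (Definition~\ref{D:EquiR}), with $\sigma$ itself serving as the connecting element. Set $(\Theta_1,\chi_1):=(\gamma^p\phi_2(\Theta),\gamma(\phi_2\otimes\phi_2)(\chi_P))$ and $(\Theta_2,\chi_2):=(\gamma^p\phi_2(\Theta)+\Phi_z(\sigma),\chi_{\phi(P)})$. The pair $(\Theta_1,\chi_1)$ is a PD datum by Lemma~\ref{L:PCDAct}(i), and I would check that $(\Theta_2,\chi_2)$ is one as well by verifying the conditions of Proposition~\ref{P:RCdata} directly: $\chi_2=\chi_1+\partial^1(\sigma)$ is a cocycle cohomologous to $\chi_1$ (hence not a coboundary), $\rho_z(\Theta_2)=\rho_z(\Theta_1)+\rho_z\Phi_z(\sigma)=0$ by Proposition~\ref{P:Cobar}(ii), and $\Phi_z(\chi_2)=\partial^1(\Theta_2)$ using that $\Phi_z$ commutes with $\partial^1$ (Proposition~\ref{P:Cobar}(i)). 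By construction $\Theta_2-\Theta_1=\Phi_z(\sigma)$, so taking $s=\sigma$ the whole statement collapses to the single $\chi$-identity
\[
\chi_{\phi(P)}-\gamma\,(\phi_2\otimes\phi_2)(\chi_P)=\partial^1(\sigma).
\]

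To establish this I would split $\chi_P$ into the two constituents of Remark~\ref{R:H2A3}: the wedge part $A_P=\sum_{i<j}a_{ij}\,x_i\otimes x_j$ and the Bockstein part $B_P=\Bock(\sum_k b_k x_k)$, handling them separately. For the Bockstein part, $\phi_2$ is an algebra automorphism of $u(\mathfrak h)$ and so commutes with $\Bock$, giving $(\phi_2\otimes\phi_2)(B_P)=\Bock(\sum_k b_k\phi_2(x_k))=\Bock(\sum_s(\sum_k G_{ks}b_k)x_s)$. Multiplying by $\gamma$ and absorbing the scalar inside by the semilinearity $\Bock(\alpha r)=\alpha^p\Bock(r)$ of Proposition~\ref{P:Cobar}(iv) replaces $\gamma$ by $\gamma^{1/p}$, which is exactly the coefficient appearing in $\widetilde{b_s}=\sum_k\gamma^{1/p}G_{ks}b_k$. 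Hence the Bockstein part of $\gamma(\phi_2\otimes\phi_2)(\chi_P)$ equals that of $\chi_{\phi(P)}$ on the nose and contributes nothing to the right-hand side; this is precisely why $\sigma$ involves only the $a_{rs}$ and no $b$-terms.

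The wedge part carries all the content. Expanding over all ordered pairs, $\gamma(\phi_2\otimes\phi_2)(A_P)=\gamma\sum_{r<s}a_{rs}\sum_{i,j}G_{ri}G_{sj}\,x_i\otimes x_j$, and I would separate its antisymmetric combination — which reassembles the chosen representative $A_{\phi(P)}=\sum_{i<j}\widetilde{a_{ij}}\,x_i\otimes x_j$ with $\widetilde{a_{ij}}=\gamma\sum_{r<s}(G_{ri}G_{sj}-G_{rj}G_{si})a_{rs}$ — from a symmetric remainder assembled from the terms $x_i\otimes x_j+x_j\otimes x_i$ (for $i<j$) and $x_i\otimes x_i$. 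The coboundary formulas coming from \eqref{E:partials}, namely $\partial^1(x_ix_j)=-(x_i\otimes x_j+x_j\otimes x_i)$ for $i\neq j$ and $\partial^1(x_i^2)=-2\,x_i\otimes x_i$, identify this symmetric remainder as a coboundary whose primitive, after collecting the $a_{rs}$-coefficients, is precisely $\sigma$; equivalently $A_{\phi(P)}-\gamma(\phi_2\otimes\phi_2)(A_P)=\partial^1(\sigma)$. The factor $\tfrac12$ multiplying the $x_i^2$-contribution in $\sigma$ is forced by $\partial^1(x_i^2)=-2\,x_i\otimes x_i$ and requires $2$ to be invertible — exactly the standing hypothesis $p>2$ of this section. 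Combined with the Bockstein computation, this yields the displayed identity.

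The step I expect to be the main obstacle is the wedge-part bookkeeping: one must reconcile the full double sum $\sum_{i,j}$ produced by $\phi_2\otimes\phi_2$ against the antisymmetrized $i<j$ representative used to parametrize $\HL^2(\Omega\,u(\mathfrak h))$, and then pin down the exact primitive $\sigma$ whose coboundary cancels the symmetric remainder. The index manipulation is delicate — in particular the relabeling $i\leftrightarrow j$ in the $i>j$ block, which governs the precise pairing of the entries of $G$ in the off-diagonal coefficient — and it is the only place where the arithmetic of $\field$, i.e.\ the invertibility of $2$, genuinely enters the argument.
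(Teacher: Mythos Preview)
Your proposal is correct and follows essentially the same route as the paper: reduce the equivalence to the single identity $\chi_{\phi(P)}-\gamma(\phi_2\otimes\phi_2)(\chi_P)=\partial^1(\sigma)$, verify it by splitting $\chi_P$ into its Bockstein part (which cancels on the nose via semilinearity and $\phi_2$ being an algebra map) and its wedge part (whose symmetric remainder is the coboundary of $\sigma$), and then check that $(\gamma^p\phi_2(\Theta)+\Phi_z(\sigma),\chi_{\phi(P)})$ is a PD datum using Proposition~\ref{P:RCdata} together with Proposition~\ref{P:Cobar}(i)(ii). The paper organizes the steps in the same order and with the same ingredients; your added remark isolating why $p>2$ is needed (invertibility of $2$ in $\partial^1(x_i^2)=-2\,x_i\otimes x_i$) is a helpful clarification but not a departure.
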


\begin{proof}
We keep the above notations and definitions of $\widetilde{a_{ij}}, \widetilde{b_k}$. We first claim that $\chi_{\phi(P)}-\gamma\, (\phi_2\otimes \phi_2)(\chi_P)=\partial^1(\sigma)$: 
\begin{align*}
\chi_{\phi(P)}&-\gamma\,(\phi_2\otimes \phi_2)(\chi_P)=\sum_{1\le i<j\le n}\widetilde{a_{ij}}\,x_i\otimes x_j+\Bock\left(\sum_{1\le k\le n}\widetilde{b_k}\,x_k\right)\\
&-\sum_{1\le i<j\le n}\gamma\,a_{ij}\,\phi_2(x_i)\otimes \phi_2(x_j)-\gamma\, \Bock\left(\sum_{1\le k\le n}\phi_2(b_k\,x_k)\right)\\
&=\sum_{1\le i<j\le n}\widetilde{a_{ij}}\,x_i\otimes x_j-\sum_{1\le i<j\le n}\gamma\,a_{ij}\,\left(\sum_{1\le r,s\le n}G_{ir}G_{js}\, x_r\otimes x_s\right)\\
&+\Bock\left(\sum_{1\le k,s\le n}\gamma^{1/p}b_kG_{ks}\,x_s\right)-\Bock\left(\sum_{1\le k,s\le n}\gamma^{1/p}b_kG_{ks}\,x_s\right)\\
&=\sum_{1\le i<j\le n}\widetilde{a_{ij}}\,x_i\otimes x_j-\sum_{1\le r<s\le n}\gamma\,a_{rs}\left(\sum_{1\le i<j\le n}G_{ri}G_{sj}\,x_i\otimes x_j\right.\\
&\left.+\sum_{1\le j<i\le n}G_{ri}G_{sj}\,x_i\otimes x_j+\sum_{1\le i\le n}G_{ri}G_{si}\, x_i\otimes x_i\right)\\
&=\sum_{1\le i<j\le n}\widetilde{a_{ij}}\,x_i\otimes x_j-\sum_{1\le r<s\le n}\gamma\,a_{rs}\left(\sum_{1\le i<j\le n}G_{ri}G_{sj}\,(x_i\otimes x_j+x_j\otimes x_i)\right.\\
&\left.-\sum_{1\le i \le n}G_{ri}G_{si}\, x_i\otimes x_i\right)\\
&=-\sum_{1\le r<s\le n}\gamma\,a_{rs}\left(\sum_{1\le i<j\le n}G_{ri}G_{sj}\, (x_i\otimes x_j+x_j\otimes x_i)+\sum_{1\le i\le n}G_{ri}G_{si}\, x_i\otimes x_i\right)
\end{align*}
Then the claim follows from applying the definition of $\sigma$ and the identity
\[
\partial^1(x_ix_j)=1\otimes (x_ix_j)-\Delta(x_ix_j)+(x_ix_j)\otimes 1=-(x_i\otimes x_j+x_j\otimes x_i)
\]
to the last equation above. 

Secondly, we claim that $\mathscr D':=(\T,\gamma^p\, \phi_2(\Theta)+\Phi_z(\sigma),\chi_{\phi(P)})$ is also a PD datum with respect to $z$. It is clear that Proposition \ref{P:RCdata} (i) holds for $\mathscr D'$ since $\phi(P)\neq 0$. For Proposition \ref{P:RCdata} (iii),
\begin{align*}
\Phi_z\left(\chi_{\phi(P)}\right)=&\ \Phi_z\left(\gamma\,(\phi_2\otimes \phi_2)(\chi_P)+\partial^1(\sigma)\right)\\
=&\ \Phi_z\left(\gamma\, (\phi_2\otimes \phi_2)(\chi_P)\right)+\Phi_z\left(\partial^1(\sigma)\right)\\
=&\ \gamma^p\,(\phi_2\otimes \phi_2)(\Phi_z(\chi_P))+\partial^1\left(\Phi_z(\sigma)\right)
\end{align*}
by Lemma \ref{L:Atype} (ii) and Proposition \ref{P:Cobar} (i). Using the fact that $\mathscr D=(\T,\Theta,\chi_P)$ is a PD datum and by Lemma \ref{L:Atype},
\begin{align*}
\Phi_z\left(\chi_{\phi(P)}\right)=\gamma^p\,(\phi_2\otimes \phi_2)(\partial^1(\Theta))+\partial^1\left(\Phi_z(\sigma)\right)=\partial^1\left(\gamma^p\,\phi_2(\Theta)+\Phi_z(\sigma)\right).
\end{align*}
One sees that $\mathscr D'$ satisfies Proposition \ref{P:RCdata} (ii), i.e., 
\begin{align*}
\rho_z\left(\gamma^p\,\phi_2(\Theta)+\Phi_z(\sigma)\right)=\gamma^{p+1}\,\phi_2(\rho_z(\Theta))+(\rho_z\circ\Phi_z)(\sigma)=0,
\end{align*}
which proves the claim. 

At last, the result follows from Definition \ref{D:EquiR} such that $\chi_{\phi(P)}-\gamma\, (\phi_2\otimes \phi_2)(\chi_P)=\partial^1(\sigma)$ and $(\gamma^p\, \phi_2(\Theta)+\Phi_z(\sigma))-\gamma^p\, \phi_2(\Theta)=\Phi_z(\sigma)$. This completes the proof. 
\end{proof}

\begin{cor}
The subset $\mathscr A^+(\T)$ of $\mathbb A_\field^{n(n+1)/2}$ is invariant under the $\Aut(\T)$-action.
\end{cor}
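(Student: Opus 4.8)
The plan is to derive this directly from Lemma~\ref{L:ActionAT}, whose proof already contains the essential computation. First I would take an arbitrary $P\in \mathscr A^+(\T)$; by definition $P\neq 0$ and $P$ is admissible, so Remark~\ref{R:APCD} supplies a PD datum $\mathscr D=(\T,\Theta,\chi_P)$ with respect to the fixed basis $z$ for some $\Theta\in u(\mathfrak h)^+$. I then fix an arbitrary $\phi=(\gamma,\phi_2)=(\gamma,G)\in \Aut(\T)$ and aim to show $\phi(P)\in \mathscr A^+(\T)$.

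The key step is to recall from the proof of Lemma~\ref{L:ActionAT} that, with the element $\sigma\in u(\mathfrak h)^+$ defined there, the triple $\mathscr D'=(\T,\gamma^p\,\phi_2(\Theta)+\Phi_z(\sigma),\chi_{\phi(P)})$ is again a PD datum with respect to $z$. That proof verifies conditions (i)--(iii) of Proposition~\ref{P:RCdata} for $\mathscr D'$, using $\phi(P)\neq 0$ for (i), Lemma~\ref{L:Atype} together with $\rho_z\circ\Phi_z=0$ (Proposition~\ref{P:Cobar}(ii)) for (ii), and Proposition~\ref{P:Cobar}(i) for (iii). Since $\mathscr D'$ has the form $(\T,\,\cdot\,,\chi_{\phi(P)})$, Remark~\ref{R:APCD} immediately gives that $\phi(P)$ is admissible.

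It then remains to confirm $\phi(P)\neq 0$, which is needed so that $\phi(P)$ lands in $\mathscr A^+(\T)$ rather than merely being an admissible point. This follows because, by Lemma~\ref{L:AutAction}, the $\Aut(\T)$-action on $\mathbb A_\field^{n(n+1)/2}$ is realized by a group embedding into $\text{GL}_{n(n+1)/2}(\field)$, hence by invertible linear maps; an invertible linear map sends the nonzero vector $P$ to a nonzero vector. Combining the two points yields $\phi(P)\in \mathscr A^+(\T)$, and since $\phi$ was arbitrary, $\mathscr A^+(\T)$ is $\Aut(\T)$-invariant.

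I do not anticipate a genuine obstacle here, as all the content is packaged into Lemma~\ref{L:ActionAT}; the only care needed is bookkeeping, namely checking that the PD datum produced there really has second-cohomology component exactly $\chi_{\phi(P)}$ (so that admissibility of the \emph{point} $\phi(P)$, and not some other cocycle, is what gets certified) and that nonzero-ness is preserved by the action.
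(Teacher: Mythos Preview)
Your proposal is correct and follows essentially the same approach as the paper, which simply cites Lemma~\ref{L:ActionAT} and Remark~\ref{R:APCD}. Your argument is a faithful unpacking of that citation, with the added care of explicitly justifying $\phi(P)\neq 0$ via Lemma~\ref{L:AutAction}, a point the paper uses tacitly inside the proof of Lemma~\ref{L:ActionAT}.
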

\begin{proof}
Let $P\in \mathscr A^+(\T)$, and $\phi\in \Aut(\T)$. It suffices to show that $\phi(P)\in \mathscr A^+(\T)$, which follows from Lemma \ref{L:ActionAT} and Remark \ref{R:APCD}.
\end{proof}

\begin{lem}\label{L:BIH2A2}
Let $\T=(\mathfrak g,\mathfrak h,\rho)$ be a permissible abelian type. Then, there is a bijection between $\mathcal H^2(\T)$ and $\mathscr A^+(\T)$ induced by $\xymatrix{
\left[(\Theta,\chi)\right]\ar@{<->}[r]&P,
}
$
where $[\chi]=[\chi_P]$ in $\HL^2(\Omega\, u(\mathfrak h))$.
\end{lem}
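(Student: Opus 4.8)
The plan is to verify that the rule $[(\Theta,\chi)]\mapsto P$, where $P$ is determined by $[\chi]=[\chi_P]$, is a well-defined map $\mathcal H^2(\T)\to\mathscr A^+(\T)$, and then to establish surjectivity and injectivity separately; permissibility of $\T$ will enter only in the injectivity step. First I would check well-definedness. Given a PD datum $(\T,\Theta,\chi)$, Proposition~\ref{P:RCdata}(i) guarantees that $\chi$ is a $2$-cocycle which is \emph{not} a $2$-coboundary, so $[\chi]$ is a nonzero class in $\HL^2(\Omega\, u(\mathfrak h))$; by Remark~\ref{R:H2A3} there is a unique nonzero point $P$ with $[\chi]=[\chi_P]$. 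To see $P\in\mathscr A^+(\T)$, write $\chi-\chi_P=\partial^1(t)$ for some $t\in u(\mathfrak h)^+$ and set $s:=\Theta-\Phi_z(t)$. Using that $\Phi_z$ commutes with the differentials (Proposition~\ref{P:Cobar}(i)) together with $\Phi_z(\chi)=\partial^1(\Theta)$, one computes $\Phi_z(\chi_P)=\partial^1(s)$, while $\rho_z(s)=0$ follows from $\rho_z(\Theta)=0$ and $\rho_z\circ\Phi_z=0$ (Proposition~\ref{P:Cobar}(ii)); hence $P$ is admissible. Finally, if $(\Theta,\chi)\sim(\Theta',\chi')$ then $\chi'-\chi=\partial^1(s)$ forces $[\chi']=[\chi]$, so the associated point $P$ is unchanged, and the assignment descends to equivalence classes.

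Surjectivity is immediate from Remark~\ref{R:APCD}: any nonzero admissible $P$ is precisely the cocycle coordinate of some PD datum $(\T,\Theta,\chi_P)$, whose class maps back to $P$.

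The substantive part, and the step I expect to be the main obstacle, is injectivity, since the equivalence relation defining $\mathcal H^2(\T)$ demands a \emph{single} element $s\in u(\mathfrak h)^+$ witnessing both $\chi'-\chi=\partial^1(s)$ and $\Theta'-\Theta=\Phi_z(s)$ simultaneously. Suppose $[(\Theta,\chi)]$ and $[(\Theta',\chi')]$ both map to $P$. Then $[\chi]=[\chi']$ gives $\chi'-\chi=\partial^1(s_0)$ for some $s_0\in u(\mathfrak h)^+$, and it remains to correct $s_0$ so that it also accounts for the $\Theta$-components. Applying $\Phi_z$ and invoking $\Phi_z(\chi)=\partial^1(\Theta)$, $\Phi_z(\chi')=\partial^1(\Theta')$ and Proposition~\ref{P:Cobar}(i) yields $\partial^1\!\left(\Theta'-\Theta-\Phi_z(s_0)\right)=0$. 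Since $\HL^1(\Omega\, u(\mathfrak h))\cong\mathfrak h$ by Lemma~\ref{L:HochUH}, the kernel of $\partial^1$ on $u(\mathfrak h)^+$ is exactly the primitive space $\mathfrak h$, so there is $w\in\mathfrak h$ with $\Theta'-\Theta=\Phi_z(s_0)+w$. Because $\rho_z$ annihilates $\Theta$, $\Theta'$ and $\Phi_z(s_0)$ (again Proposition~\ref{P:Cobar}(ii)), we obtain $\rho_z(w)=0$, i.e.\ $w\in\Ker\,\rho_z$ on $\mathfrak h$.

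This is exactly where permissibility is used (Definition~\ref{D:AdPer}): the hypothesis $\Ker\,\rho_z=\Img\,\Phi_z$ on $\mathfrak h$ produces some $v\in\mathfrak h$ with $\Phi_z(v)=w$. Setting $s:=s_0+v$ and using that $v$ is primitive, so $\partial^1(v)=0$, one gets $\partial^1(s)=\chi'-\chi$ and $\Phi_z(s)=\Phi_z(s_0)+w=\Theta'-\Theta$ at once, whence $(\Theta,\chi)\sim(\Theta',\chi')$ by Definition~\ref{D:EquiR}. The only points requiring care are that $\Phi_z$ genuinely preserves $\mathfrak h$ (immediate from Equation~(\ref{E:phi}), since $r^p=r^{[p]}\in\mathfrak h$, $\lambda r\in\mathfrak h$, and $\rho_z^{p-1}(r)\in\mathfrak h$ for $r\in\mathfrak h$), so that the permissibility hypothesis applies verbatim to $w$, and that the corrected $s$ lies in $u(\mathfrak h)^+$; both are routine. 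Once permissibility supplies $v$, injectivity follows, completing the bijection.
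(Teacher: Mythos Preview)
Your proposal is correct and follows essentially the same approach as the paper. The only cosmetic difference is that the paper first replaces an arbitrary representative $(\Theta,\chi)$ by an equivalent one $(\Theta+\Phi_z(s),\chi_P)$ with cocycle literally equal to $\chi_P$, so that in the injectivity step the two classes already share the same $\chi_P$ and your $s_0$ can be taken to be $0$; the remainder of the argument (reducing the discrepancy to some $w\in\mathfrak h$ via $\HL^1\cong\mathfrak h$, checking $\rho_z(w)=0$, and invoking permissibility to write $w=\Phi_z(v)$ with $v$ primitive) is identical.
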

\begin{proof}
Let $[(\Theta,\chi)]$ be an equivalence class in $\mathcal H^2(\T)$ given by some PD datum $\mathscr D=(\T,\Theta,\chi)$. By Proposition \ref{P:RCdata} (i), $\chi$ is a $2$-cocycle but not a $2$-coboundary in $\Omega^2\,(u(\mathfrak h))$. Hence, there exists a point $P\in \mathbb A_\field^{n(n+1)/2}\setminus\{0\}$ such that $[\chi_P]=[\chi]$ in $\HL^2(\Omega\,u(\mathfrak h))$ by Remark \ref{R:H2A3}. Next, we write $\chi_P=\chi+\partial^1(s)$ for some $s\in \mathfrak u(\mathfrak h)^+$. It is easy to check that $\mathscr D'=(\T,\Theta+\Phi_z(s),\chi_P)$ is also a PD datum and $(\Theta,\chi)\sim (\Theta+\Phi_z(s),\chi_P)$. So without loss of generality, we can assume that $[(\Theta,\chi)]=[(\Theta,\chi_P)]$ for some $P\in \mathbb A_\field^{n(n+1)/2}\setminus\{0\}$. 

Now, suppose $(\Theta_P,\chi_P)\sim (\Theta_Q,\chi_Q)$ for two points $P,Q\in \mathbb A^{n(n+1)/2}\setminus \{0\}$. By Definition \ref{D:EquiR}, there exists some $s\in u(\mathfrak h)^+$ such that $\chi_P-\chi_Q=\partial^1(s)$ and $\Theta_P-\Theta_Q=\Phi_z(s)$. It follows that $[\chi_P]=[\chi_Q]$ in $\HL^2(\Omega\, u(\mathfrak h))$, which implies that $P=Q$ by Remark \ref{R:H2A3} again. Hence, for any $[(\Theta,\chi_P)]$ in $\mathcal H^2(\T)$, the point $P$ is uniquely determined by the equivalent class. And there is a well-defined map $q: \mathcal H^2(\T)\to \mathbb A_\field^{n(n+1)/2}\setminus\{0\}$ induced by $[(\Theta,\chi_P)]\mapsto P$. 

By Remark \ref{R:APCD}, $q$ maps $\mathcal H^2(\T)$ onto $\mathscr A^+(\T)$. It remains to show that $q$ is injective. Suppose there are two equivalence classes $[(\Theta,\chi_P)]$ and $[(\Theta',\chi_P)]$ which are in $q^{-1}(P)$. By Proposition \ref{P:RCdata} (iii), $\Phi_z(\chi_P)=\partial^1(\Theta)=\partial^1(\Theta')$. Hence, $\partial^1(\Theta-\Theta')=0$. Then, there is some $s\in \mathfrak h$ such that $\Theta=\Theta'+s$ since $\Ker\,\partial^1=\HL^1(\Omega\, u(\mathfrak h))=\mathfrak h$ by Lemma \ref{L:HochUH}. On the other hand, by Proposition \ref{P:RCdata} (ii), $\rho_z(s)=\rho_z(\Theta)-\rho_z(\Theta')=0$. Since $\T$ is permissible, there is some $r\in \mathfrak h$ such that $s=\Phi_z(r)$. Then one sees that $\chi_P-\chi_P=0=\partial^1(r)$, since $r \in \mathfrak h$ is primitive, and $\Theta-\Theta'=s=\Phi_z(r)$. This implies that $(\Theta,\chi_P)\sim (\Theta',\chi_P)$ in $\mathcal H^2(\T)$, and hence $q$ is injective. 
\end{proof}

\begin{thm}\label{T:BPT}
Let $\T$ be a permissible abelian type of rank $n$. There is a bijection between $\mathcal H^2(\T)/\Aut(\T)$ and $\mathscr A^+(\T)/\Aut(\T)$ induced by $\xymatrix{
\left[(\Theta,\chi_P)\right]\ar@{<->}[r]&[P],
}
$
where
\[
P=(a_{ij},b_k)_{1\le i<j\le n,1\le k\le n}\in \mathscr A^+(\T),\ \chi_P=\sum_{1\le i<j\le n}a_{ij}\, x_i\otimes x_j+\Bock\left(\sum_{1\le k\le n}b_k\, x_k\right)\]
and $\Theta$ is chosen from $u(\mathfrak h)^+$ (not necessarily unique) such that $\rho_z(\Theta)=0$ and $\Phi_z(\chi_P)=\partial^1(\Theta)$.
\end{thm}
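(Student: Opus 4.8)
The plan is to combine the two bijections already in hand. By Lemma~\ref{L:BIH2A2}, permissibility of $\T$ gives a bijection $q\colon \mathcal H^2(\T)\to \mathscr A^+(\T)$ sending $[(\Theta,\chi_P)]\mapsto P$, where $P$ is the point with $[\chi_P]=[\chi]$ in $\HL^2(\Omega\,u(\mathfrak h))$. Separately, by Lemma~\ref{L:PCDAct} the group $\Aut(\T)$ acts on $\mathcal H^2(\T)$, and by Lemma~\ref{L:AutAction} together with the ensuing discussion it acts on $\mathscr A^+(\T)\subseteq \mathbb A_\field^{n(n+1)/2}$. Thus the heart of the matter is to verify that $q$ is \emph{equivariant} for these two actions; once that is done, $q$ descends to a bijection on orbit spaces $\mathcal H^2(\T)/\Aut(\T)\to \mathscr A^+(\T)/\Aut(\T)$, which is exactly the claimed correspondence $[(\Theta,\chi_P)]\leftrightarrow [P]$.

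First I would fix $\phi=(\gamma,\phi_2)=(\gamma,G)\in\Aut(\T)$ and a representative PD datum $\mathscr D=(\T,\Theta,\chi_P)$ of a class in $\mathcal H^2(\T)$. By Lemma~\ref{L:PCDAct}(ii), the action of $\phi$ on this class is represented by $\bigl(\gamma^p\,\phi_2(\Theta),\,\gamma\,(\phi_2\otimes\phi_2)(\chi_P)\bigr)$. The key computational input is Lemma~\ref{L:ActionAT}: it produces an explicit element $\sigma\in u(\mathfrak h)^+$ with
\[
\bigl[(\gamma^p\,\phi_2(\Theta),\,\gamma\,(\phi_2\otimes\phi_2)(\chi_P))\bigr]
=\bigl[(\gamma^p\,\phi_2(\Theta)+\Phi_z(\sigma),\,\chi_{\phi(P)})\bigr]
\]
in $\mathcal H^2(\T)$. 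This rewrites the image class in the normalized form $[(\Theta',\chi_{\phi(P)})]$ whose underlying cocycle is literally $\chi_{\phi(P)}$. Applying $q$ to both sides, the left side gives $q(\phi\cdot[(\Theta,\chi_P)])$ while the right side, by the definition of $q$ and Remark~\ref{R:H2A3} (distinct points give distinct cohomology classes), gives the point $\phi(P)$. Hence $q(\phi\cdot[(\Theta,\chi_P)])=\phi(P)=\phi\cdot q([(\Theta,\chi_P)])$, which is precisely the equivariance claim.

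With equivariance established, the descent is formal: a bijection between two sets carrying group actions that intertwines the actions induces a bijection on the respective orbit sets, with inverse induced by $q^{-1}$. I would state this descent explicitly, noting that $q^{-1}$ is also equivariant since it is the inverse of an equivariant bijection, so the induced map on quotients is well-defined in both directions and the two composites are the identities on $\mathcal H^2(\T)/\Aut(\T)$ and $\mathscr A^+(\T)/\Aut(\T)$ respectively. Finally I would record that the $\Theta$ appearing in the representative is any element of $u(\mathfrak h)^+$ satisfying $\rho_z(\Theta)=0$ and $\Phi_z(\chi_P)=\partial^1(\Theta)$ as guaranteed by permissibility and Proposition~\ref{P:RCdata}; its non-uniqueness is harmless precisely because we pass to equivalence classes, as the injectivity argument in Lemma~\ref{L:BIH2A2} shows that any two admissible choices of $\Theta$ for the same $P$ are $\sim$-equivalent.

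The main obstacle I anticipate is the equivariance verification, and specifically the bookkeeping hidden inside Lemma~\ref{L:ActionAT}: one must trust that the explicit $\sigma$ produced there genuinely witnesses the equivalence $\gamma\,(\phi_2\otimes\phi_2)(\chi_P)\sim\chi_{\phi(P)}$ \emph{and} that the adjusted $\Theta$-component remains a valid PD-datum component (i.e.\ still satisfies the conditions of Proposition~\ref{P:RCdata}). Since Lemma~\ref{L:ActionAT} already packages both of these facts, the remaining work is chiefly to recognize that its conclusion is exactly the equivariance of $q$ after applying $q$ to both displayed classes; everything else is the standard orbit-space descent.
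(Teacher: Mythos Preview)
Your proposal is correct and follows essentially the same approach as the paper: invoke Lemma~\ref{L:BIH2A2} for the bijection $q$, then use Lemma~\ref{L:PCDAct}(ii) together with Lemma~\ref{L:ActionAT} to establish that $q$ is $\Aut(\T)$-equivariant, and finally descend to orbit spaces. The paper's proof is a one-sentence summary of exactly this argument, so your more detailed unpacking of the equivariance step and the formal descent is faithful to (and a useful elaboration of) the original.
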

\begin{proof}
By Lemma \ref{L:BIH2A2}, it suffices to prove that the $\Aut(\T)$-action on $\mathscr A^+(\T)$ induced by the bijection $q: \mathcal H^2(\T)\longleftrightarrow\mathscr A^+(\T)$ is compatible with Lemma \ref{L:AutAction}, which follows from Lemma \ref{L:PCDAct} (ii) and Lemma \ref{L:ActionAT}.
\end{proof}

\begin{remark}
By Theorem~\ref{TH:HopfO}, there is a bijection between the set of isomorphism classes of quantum $p$-groups in $\mathscr X(n)$ and the disjoint union
\[
\sqcup_{\T\in \mathcal T_P} \mathscr A^+(\T)/\Aut(\T)\, \bigsqcup\, \sqcup_{\T\in \mathcal T_N}\mathcal H^2(\T)/\Aut(\T),
\]
where $\mathcal T_P$ is the set of all non-isomorphic permissible types and $\mathcal T_N$ is the set of all non-isomorphic nonpermissible types. Thus, to apply Primitive Deformation theory in the next section for $\mathscr X(2)$, our approach is to break the classification into permissible and nonpermissible types.  
\end{remark}


\section{Primitive deformations of rank-two abelian types} 
\label{sec:X(2)}

Again, set characteristic $p>2$ throughout this section. We apply our Primitive Deformation theory to all abelian types of rank $n=2$. The results in this section provide the classification of connected $p^3$-dimensional Hopf algebras over $\field$ with abelian primitive space of dimension $2$, i.e., classification of quantum $p$-groups in $\mathscr X(2)$. Throughout, let $\T=(\mathfrak g,\mathfrak h,\rho)$ be an abelian type of rank $2$, where we fix bases $z$ of $\mathfrak g$ and $x,y$ of $\mathfrak h$. Regarding Remark \ref{R:H2A3}, for any $P=(a,b,c)\in \mathbb A_\field^3$, we denote $\chi_P=a\,x\otimes y+\Bock (b\,x+c\,y)$. According to Lemma \ref{L:AutAction}, the automorphism group $\Aut(\T)$ acts on $\mathbb A_\field^3$ via, for any $\phi\in \Aut(\T)$,
\begin{align}\label{E:EmA3}
\phi=(\gamma,G)\mapsto \left(
\begin{array}{ccc}
\gamma\, \text{det}(G) &  0  & 0\\
0  &  \multicolumn{2}{c}{\multirow{2}{*}{{\large $\gamma^{\frac{1}{p}}\, G^{\top}$}}}\\
0  &
\end{array}
\right).
\end{align}


\subsection{Types}
First of all, we classify all types $\T=(\mathfrak g,\mathfrak h,\rho)$ of rank $2$. Note that $\mathfrak g$ and $\mathfrak h$ are abelian Lie algebras, i.e., equipped with trivial Lie bracket. These two restricted Lie algebras are determined by their restricted maps on the bases. After taking some linear transformations, there are only two isomorphism classes of $\mathfrak g$ and four isomorphism classes of $\mathfrak h$ \cite[Theorem~7.4 (1)-(4)]{wang2012connected}. The isomorphism classes are labeled below and distinguished by the restricted maps
\begin{align*}
\mathfrak g: \ & (N)\ z^{[p]}=0; &&(S)\ z^{[p]}=z;\\
\mathfrak h: \ & (A)\ x^{[p]}=0, y^{[p]}=0; && (B)\ x^{[p]}=x, y^{[p]}=0; \\  
                    \ & (C)\ x^{[p]}=y, y^{[p]}=0; && (D)\ x^{[p]}=x, y^{[p]}=y.
\end{align*}
Now, any fixed pair $(\mathfrak g,\mathfrak h)$ is, up to isomorphism, one of the possible combinations of the $\mathfrak g$'s and $\mathfrak h$'s from the above list. The restricted map of $\mathfrak g$ can only be $z^{[p]}=\delta z$, where $\delta=0,1$. 
We use $e_{ij}$ to denote the $2\times 2$ matrix with 1 in the $(i,j)$-entry and zero elsewhere.

We can always assume that two isomorphic types share the same restricted Lie algebras by changing bases and rewriting the corresponding algebraic representation using the commutative diagram in Definition \ref{D:2TIsom}. Hence, applying Lemma \ref{L:IsomCT}, one can directly compute $\T$ for each pair of isomorphism classes of $\mathfrak g$ and $\mathfrak h$. 

In summary, there are totally 14 isomorphism classes and one parametric family of abelian types of rank $2$, which are listed in Table~\ref{tab:rank 2}. We only illustrate the calculation for the parametric family $\T(\zeta)$, where $\mathfrak g=S,\mathfrak h=A$, the parameter $\zeta\in \mathbb F_p$, and nonzero $\zeta$ is determined up to replacement of $\zeta^{-1}$. We will leave the rest to the reader. \\

\noindent
\textbf{Type $\T(\zeta)$:} Using the same notations as in Section~\ref{sec:realization}, first of all, we find all possible algebraic representations of $\mathfrak g$ on $\mathfrak h$. By Lemma \ref{L:IsomCT}, they are all given by $2\times 2$ matrices $M$ satisfying $M^p=M$, since $R=0$ for $\mathfrak h=A$. Next, for $\Aut(\mathfrak h)$, we can choose $G$ to be any invertible $2\times 2$ matrix. Similarly for $\mathfrak g$, one sees that $\gamma$ should be a nonzero scalar in the field $\mathbb F_p$ of exactly $p$ elements. In conclusion, the set of isomorphism classes of types with fixed pair $(S,A)$ is bijective to the set $\{M\in \text{M}_2(\field)\,|\,M^p=M\}$ modulo the equivalence relation $\sim$, where $M\sim M'$ if $M'=\gamma G^{-1}MG$, for some $G\in \GL_2(\k)$ and $\gamma\in \mathbb F_p^\times$. 

Without loss of generality, suppose $M\neq 0$, otherwise it is (\T3). One sees that $M$ is diagonalizable with eigenvalues inside $\mathbb F_p$ since $M^p=M$. Hence, by a linear transformation $G$ of the base and setting $\gamma=1$, we can always assume that $M$ is a diagonal matrix with eigenvalues inside $\mathbb F_p$. By further rescaling one nonzero eigenvalue by $\gamma^{-1}$, we have $M\sim \Diag(1,\zeta)$ for some $\zeta\in \mathbb F_p$. Applying the equivalence relation $\sim$ again, one verifies that the equivalence classes of $\Diag(1,\zeta)$ with $\zeta \in \mathbb F_p\setminus\{0\}$ are determined up to replacement of $\zeta^{-1}$. Hence, one has $M=e_{11}+\zeta e_{22}$ and $\T(\zeta)$ is permissible by a direct computation. 

\FloatBarrier
\begin{table}[!htp] 
\caption {Classify types $\T=(\mathfrak g, \mathfrak h, \rho_z)$ of rank $2$} \label{tab:rank 2} \vspace{-0.2cm}
\begin{center}
\begin{tabular}{| c | c | c | c | c | c |}  
\hline
\bf{Types}  & $\mathfrak{g}$ & $\mathfrak{h}$ & $\rho_z=M$ & \bf{Permissible} & $\mathscr A^+(\T)$ \\
\hline \hline
(\T1) & $N$ & $A$ & $0$ & No, $\Img\, \Phi_z=0, \ \Ker\, \rho_z = \mathfrak h$ & Non-empty \\ \hline
(\T2) & $N$ & $A$ & $e_{12}$ & No, $\Img\, \Phi_z=0, \ \Ker\, \rho_z = \k\, y$ & Non-empty \\ \hline
(\T3) & $S$ & $A$ & $0$ & Yes, $\Img\, \Phi_z=\Ker\, \rho_z = \mathfrak h$ & Empty \\ \hline
(\T4) & $N$ & $B$ & $0$ & No, $\Img\, \Phi_z=\k\, x, \ \Ker\, \rho_z = \mathfrak h$ & Non-empty \\ \hline
(\T5) & $N$ & $B$ & $e_{21}$ & Yes, $\Img\, \Phi_z=\Ker\, \rho_z = \k\, x$ & Non-empty \\ \hline
(\T6) & $S$ & $B$ & $0$ & Yes, $ \Img\, \Phi_z=\Ker\, \rho_z = \mathfrak h$ & Non-empty \\ \hline
(\T7) & $S$ & $B$ & $e_{22}$ & Yes, $\Img\, \Phi_z=\Ker\, \rho_z = \k\, x$ & Non-empty \\ \hline
(\T8) & $S$ & $B$ & $e_{21}+e_{22}$ & Yes, $\Img\, \Phi_z=\Ker\, \rho_z = \k\, x$ & Non-empty \\ \hline
(\T9) & $N$ & $C$ & $0$ & No, $\Img\, \Phi_z= \k\, y, \ \Ker\, \rho_z = \mathfrak h$ & Non-empty \\ \hline
(\T10) & $N$ & $C$ & $e_{12}$ & Yes, $\Img\, \Phi_z=\Ker\, \rho_z = \k\, y$ & Non-empty \\ \hline
(\T11) & $S$ & $C$ & $0$ & Yes, $\Img\, \Phi_z=\Ker\, \rho_z = \mathfrak h$ & Empty \\ \hline
(\T12) & $S$ & $C$ & $e_{11}$ & Yes, $\Img\, \Phi_z=\Ker\, \rho_z = \k\, y$ & Non-empty \\ \hline
(\T13) & $N$ & $D$ & $0$ & Yes, $\Img\, \Phi_z=\Ker\, \rho_z = \mathfrak h$ & Empty, \\ \hline
(\T14) & $S$ & $D$ & $0$ & Yes, $\Img\, \Phi_z=\Ker\, \rho_z = \mathfrak h$ & Non-empty \\ \hline
$\T(\zeta=-1), \zeta\in \mathbb F_p$ & $S$ & $A$ & $e_{11}+\zeta e_{22}$ & Yes, $\Img\, \Phi_z=\Ker\, \rho_z = 0$ & Empty \\ \hline
$\T(\zeta \neq -1), \zeta\in \mathbb F_p$ & $S$ & $A$ & $e_{11}+\zeta e_{22}$ & Yes, $\Img\, \Phi_z=\Ker\, \rho_z = 0$ & Non-empty \\ \hline
\end{tabular}
\end{center}
\end{table}

Next, we will show that set $\mathscr A^+(\T)$ is empty for the case $\T(\zeta=-1)$, and leave the rest to the reader. According to Table~\ref{tab:rank 2}, we have $z^{[p]}=z$ and $\mathfrak h^{[p]}=0$ and $\rho_z(x)=x,\, \rho_z(y)=-y$.  Let's assume $P=(a,b,c)\in \mathscr A^+(\T)$. By Remark \ref{R:APCD}, there is some PD datum $(\Theta,\chi_P)$ with respect to $z$. We use the fact that $\rho_z\left(\Bock(\mathfrak h)\right)=0$ in $\HL^2(\Omega\,u(\mathfrak h))$ by Proposition~\ref{P:Cobar} (iii). Hence
\begin{align*}
\Phi_z(\chi_P)&\,=[\chi_P^p- \chi_P+\rho_z^{p-1}(a\,x\otimes y)]\\
&\,=[-\chi_P+a\,\rho_z^{p-2}(\rho_z(x)\otimes y+x\otimes \rho_z(y))]=[-\chi_P]=0.
\end{align*}
So $\chi_P$ is a $2$-coboundary, which is a contradiction according to Proposition \ref{P:RCdata} (i). Therefore, $\mathscr A^+(\T)$ is empty for $\T(\zeta=-1)$.

\begin{remark} \label{R:primC}
When $\T$ is one of those types in Table~\ref{tab:rank 2} whose set $\mathscr A^+(\T)$ is empty, there is no PD of $u(\T)$. Indeed, $u(\T)$ for these types are isomorphic to certain connected Hopf algebras in the C-family classified in \cite[Theorem~1.4]{NWW1}, also see Table~\ref{tab:typeC}. In particular, $u(\T3)\cong (\C7)$, $u(\T11)\cong (\C8)$ and $u(\T13)\cong (\C9)$ and $u(\T(\zeta=-1))\cong (\C16)$ with parameters $\lambda=-i$ and $\delta=(-i)^{p-1}$, where $i^2=-1$.
\end{remark}

\subsection{Primitive deformations of permissible types}\label{S:typesT}

Let $\T$ be a permissible type in Table~\ref{tab:rank 2} with non-empty set $\mathscr A^+(\T)$, $[P]$ be one of the $\Aut(\T)$-orbits in $\mathscr A^+(\T)$. By Theorem~\ref{TH:HopfO}, isomorphism classes of quantum $p$-groups in $\mathscr X(2)$ of type $\T$ can be obtained by PD's of $u(\T)$ with the PD data taken from the equivalence classes $[(\Theta,\chi_P)]$ in $\mathscr A^+(\T)/\Aut(\T)$. If an orbit $[P]$ is a single point, then there is only one isomorphism class of quantum $p$-groups given by $[P]$. If an orbit $[P]$ is infinite, e.g., $[P]=[(\xi,0,1)]$ for the type (\T5) with $\xi\in \field$, the quantum $p$-groups given by values $\xi,\xi'\in\field$ are isomorphic if and only if $\xi=\tau\xi'$ for some $n$-th root of unity $\tau$. Then we say that the isomorphism classes of the quantum $p$-groups given by the orbit $[P]$ are parametrized by $\field/\mu_n$ (or, $\field^\times/\mu_n$).

Below in Table \ref{tab:ConC}, for each permissible type $\T$ in Table~\ref{tab:rank 2} with non-empty set $\mathscr A^+(\T)$, we give the $\Aut(\T)$-orbits $[P]$ in $\mathscr A^+(\T)$, the equivalence classes $[(\Theta,\chi_P)]$ in $\mathscr A^+(\T)/\Aut(\T)$, and the isomorphism classes of the quantum $p$-groups described by the parametric spaces, respectively. In Appendix~\ref{Appen:ConC}, we check for the type (\T5) and leave the rest to the reader.

\FloatBarrier
\begin{table}[!htp] 
\caption{Quantum $p$-groups of permissible types}\label{tab:ConC} \vspace{-0.2cm}
\begin{center}
\begin{tabular} {| c | p{2.4cm} | p{6.9cm} | p{2.1cm} |} 
\hline
\bf{Types}  &  $[P]$   & $[(\Theta,\chi_P)]$ & \bf{\# of Iso. classes} \\
\hline \hline
(T5)       &   $P=(1,0,0)$                                                      &  $\Theta=0,\, \chi_P=x\otimes y$                                                          &              1               \\
           &    $P=(\xi,0,1)$                                                      &     $\Theta=(x^{p-1}-1)y,\, \chi_P=\xi x\otimes y+\Bock(y)$                                                         &    $\k/\mu_{(p^2-1)/2}$                         \\
\hline
(T6)       &    $P=(1,0,0)$                                                      &   $\Theta=0,\, \chi_P=\Bock(x)$                                                           &           1                  \\
\hline
(T7)       &    $P=(1,0,0)$                                                      &        $\Theta=0,\, \chi_P=x\otimes y$                                                     &         1                    \\
       &     $P=(0,1,0)$                                                     &     $\Theta=0,\, \chi_P=\Bock(x)$                                                          &                  1           \\
              &   $P=(1,1,0)$                                                       &       $\Theta=0,\, \chi_P=x\otimes y+\Bock(x)$                                                        &         1                    \\
\hline
(T8)       &     $P=(\xi,0,0)$                                                     & $\Theta=-\frac{\xi}{2}x^2,\, \chi_P=\xi x\otimes y$                                                             &            $\k^\times /\mu_{(p-1)/2}$                 \\

          &     $P=(\xi,1,0)$                                                     & $\Theta=-\frac{\xi}{2}x^2,\, \chi_P=\xi x\otimes y+\Bock(x)$                                                             &      $\k $                       \\
\hline
(T10)       &   $P=(1,0,0)$                                                       &      $\Theta=0,\,\chi_P=x\otimes y$                                                         &              1               \\
       &   $P=(\xi,0,1)$                                                       &      $\Theta=0,\, \chi_P=\xi x\otimes y+\Bock(y)$                                                         &    $\k /\mu_{p^2-p+1}$                          \\
\hline
(T12)       &     $P=(1,0,0)$                                                     &        $\Theta=0,\, \chi_P=x\otimes y$                                                        &           1                  \\
\hline
(T14)      &         $P=(1,0,0)$                                                 &         $\Theta=0,\, \chi_P=x\otimes y$                                                        &         1                    \\
      &         $P=(0,1,0)$                                                 &         $\Theta=0,\, \chi_P=\Bock(x)$                                                        &                    1         \\
      &         $P=(1,1,0)$                                                 &         $\Theta=0,\, \chi_P=x\otimes y+\Bock(x)$                                                        &    1                         \\
\hline
T$(\zeta\neq -1)$      &    $P=(1,0,0)$                                                      &        $\Theta=0,\, \chi_P=x\otimes y$                                                     &         $(p+1)/2$                \\
\hline
\end{tabular}
\end{center}
\end{table}

Note that these types contain four infinite families, which are all parametrized by one parameter denoted by $\xi$. For the type $\T(\zeta\neq -1)$, the isomorphism classes form a finite parametric family with parameter $\zeta\in \mathbb F_p\setminus\{-1\}$. By Table~\ref{tab:rank 2}, $\T(\zeta)\cong \T(\zeta')$ if and only if $\zeta=\zeta'^{\pm 1}$ when $\zeta,\zeta'\neq 0$. Hence, there are total $\frac{p+1}{2}$ isomorphism classes for $\T(\zeta\neq -1)$. For each of the other cases in Table \ref{tab:ConC}, the isomorphism class is unique.\\


\subsection{Primitive deformations of nonpermissible types}\label{typesnotC}
In this subsection, we will deal with nonpermissible abelian types of rank $2$, i.e., those from the set 
\[\mathcal E:=\{(\T1),(\T2),(\T4),(\T9)\}\]
in Table~\ref{tab:rank 2}. In the following, let $\T=(\mathfrak g,\mathfrak h,\rho)\in \mathcal E$, where we still fix bases $x,y$ for $\mathfrak h$, $z$ for $\mathfrak g$ and write $z^p=\delta z$ for $\delta=0,1$. We keep using the same notations and definitions as before. The restricted universal enveloping algebra $u(\mathfrak h)$ has a PBW basis $\{x^iy^j\, |\, 0\le i,j\le p-1\}$, where we denote by $u(\mathfrak h)_{\ge 2}$ the subspace of $u(\mathfrak h)$ spanned by all those PBW bases $x^iy^j$ satisfying $i+j\ge 2$. Clearly, there is a vector space decomposition $u(\mathfrak h)^+=u(\mathfrak h)_{\ge 2}\oplus \mathfrak h$.

First, we will show that there is an one-to-one correspondence between the equivalence classes in $\mathcal H^2(\T)$ and the points in a subset of the affine space $\mathbb A^5$, which will be constructed as follows.

Let $P=(a,b,c,d,e)\in \mathbb A^5=\mathbb A^2\times \mathbb A^3$. We denote a pair $(\Theta_P,\chi_P)$, where $\Theta_P\in \mathfrak h$ and $\chi_P\in (u(\mathfrak h)^+)^{\otimes 2}$ such that
\begin{align}\label{E:ST}
\Theta_P=a\, x+b\, y,\quad \chi_P=c\, x\otimes y+\Bock(d\, x+e\,y).
\end{align}

\begin{deff}\label{D:BSet}
Let $P=(a,b,c,d,e)\in \mathbb A^5=\mathbb A^2\times \mathbb A^3$, $\Theta_P$ and $\chi_P$ as in Equation~\eqref{E:ST}. Suppose $\T\in \mathcal E$. We define 
\[
B^+(\T)=\set[\Bigg]
{
P\in \mathbb A^5\given
\begin{tabular}{l }
$\chi_P\neq 0$,\, \text{and there exists some}\ $\Theta\in u(\mathfrak h)_{\ge 2}$\\ \text{such that}\, $\Phi_z(\chi_P)=\partial^1(\Theta),\, \rho_z(\Theta+\Theta_P)=0$
\end{tabular}
}
\]
and
\[
\mathscr B^+(\T)=
\begin{cases}
B^+(\T), & \T=(\T1)\, \text{or}\, (\T2)\\
\text{The set of}\, P\in B^+(\T)\, \text{with}\, a=0,  &\T=(\T4)\\ 
\text{The set of}\, P\in B^+(\T)\, \text{with}\, b=0,   &\T=(\T9) 
\end{cases}.
\]
\end{deff}

\begin{remark}\label{R:A5}
\
\begin{enumerate}
\item[(i)] Let $P'=(c,d,e)$ be the projection of $P=(a,b,c,d,e)$ in $\mathbb A^3$. Indeed, $\chi_P$ is the same as the element $\chi_{P'}$ defined in Remark \ref{R:H2A3}.
\item[(ii)] Note that the element $\Theta$ used to describe the set $B^+(\T)$ in Definition \ref{D:BSet} is uniquely determined by $P$ since two candidates $\Theta$ and $\Theta'$ yields $\partial^1(\Theta-\Theta')=0$, which implies that $\Theta-\Theta'\in \left(\,\mathfrak h \cap u(\mathfrak h)_{\geq 2}\,\right) = 0$. We then denote such $\Theta$ by $\Psi_P$ for any $P\in B^+(\T)$. 
\item[(iii)] By definition, a point $P\in \mathbb A^5$ belongs to $\mathscr B^+(\T)$ if and only if there is a PD datum $(\T,\Psi_P+\Theta_P,\chi_P)$ with respect to $z$, for some $\Psi_P\in u(\mathfrak h)_{\ge 2}$.
\end{enumerate}
\end{remark}

\begin{deff}\label{D:equiv}
We define an equivalence relation $\sim$ on $B^+(\T)$ such that two points $P\sim Q$ in $B^+(\T)$ if and only if 
\begin{align*}
\Theta_P-\Theta_Q=\Phi_z(s),\ \text{and}\ \chi_P=\chi_Q,
\end{align*}
for some $s\in \mathfrak h$. 
\end{deff}

\begin{lem}\label{L:BIJ}
Let $\T\in \mathcal E$. Then the following are bijective 
\begin{itemize}
\item[(i)] $\mathcal H^2(\T)$;
\item[(ii)] $B^+(\T)/\sim$;
\item[(iii)] $\mathscr B^+(\T)$,
\end{itemize}
where the bijection between (i) and (ii) is induced by $[(\Psi_P+\Theta_P,\chi_P)]\longleftrightarrow [P]$ and the bijection between (ii) and (iii) is given by the natural projection from $B^+(\T)$ to $\mathscr B^+(\T)$.
\end{lem}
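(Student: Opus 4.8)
The plan is to set up the explicit map $F\colon B^+(\T)\to\mathcal H^2(\T)$, $P\mapsto [(\Psi_P+\Theta_P,\chi_P)]$, which is well-defined because $(\T,\Psi_P+\Theta_P,\chi_P)$ is a PD datum by Remark~\ref{R:A5}(iii), and then prove that $F$ is surjective and satisfies $F(P)=F(Q)$ if and only if $P\sim Q$; this yields the bijection (i)$\leftrightarrow$(ii). The bijection (ii)$\leftrightarrow$(iii) is handled afterward, by checking that the normalization defining $\mathscr B^+(\T)$ selects exactly one representative from each $\sim$-class. Throughout I would lean on the decomposition $u(\mathfrak h)^+=u(\mathfrak h)_{\ge 2}\oplus\mathfrak h$, the uniqueness of $\Psi_P$ from Remark~\ref{R:A5}(ii), the identity $\Ker\partial^1=\mathfrak h$ from Lemma~\ref{L:HochUH}, and the relations $\Phi_z(\mathfrak h)\subseteq\mathfrak h$, $\Phi_z\circ\partial^1=\partial^1\circ\Phi_z$, and $\rho_z\circ\Phi_z=0$ from Proposition~\ref{P:Cobar}.

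For surjectivity I would start from an arbitrary PD datum $(\T,\Xi,\chi)$ and first put it in normal form. By Proposition~\ref{P:RCdata}(i) the class $[\chi]$ is nonzero, so by Remark~\ref{R:H2A3} there is a unique nonzero $P'=(c,d,e)\in\mathbb A^3$ with $[\chi]=[\chi_{P'}]$, say $\chi_{P'}=\chi+\partial^1(s)$. Replacing $(\Xi,\chi)$ by $(\Xi+\Phi_z(s),\chi_{P'})$ produces an equivalent pair in $\mathcal H^2(\T)$ (Definition~\ref{D:EquiR}) which is still a PD datum, since $\Phi_z$ commutes with $\partial^1$ and $\rho_z\circ\Phi_z=0$. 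Writing $\Xi+\Phi_z(s)=\Psi+\Theta_0$ along the decomposition with $\Theta_0=a\,x+b\,y$, and using $\partial^1(\Xi+\Phi_z(s))=\partial^1(\Psi)$, the uniqueness statement identifies $\Psi=\Psi_P$ for $P=(a,b,c,d,e)$; the condition $\rho_z(\Xi+\Phi_z(s))=0$ then certifies $P\in B^+(\T)$. Hence every class has a representative of the required form, so $F$ is onto.

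Well-definedness and injectivity reduce to matching the two equivalence relations. If $P\sim Q$ in the sense of Definition~\ref{D:equiv}, then $\chi_P=\chi_Q$ gives $\partial^1(\Psi_P)=\partial^1(\Psi_Q)$, so $\Psi_P-\Psi_Q\in u(\mathfrak h)_{\ge 2}\cap\mathfrak h=0$ and $\Psi_P=\Psi_Q$; combined with $\Theta_P-\Theta_Q=\Phi_z(s)$ for $s\in\mathfrak h$ (whence $\partial^1(s)=0$), this is exactly the relation of Definition~\ref{D:EquiR}, so $F(P)=F(Q)$. Conversely, if the two data are $\mathcal H^2$-equivalent via some $s\in u(\mathfrak h)^+$, then $\chi_Q-\chi_P=\partial^1(s)$ forces $[\chi_P]=[\chi_Q]$, hence $P'=Q'$ and $\chi_P=\chi_Q$ by Remark~\ref{R:H2A3}; thus $\partial^1(s)=0$, i.e. $s\in\mathfrak h$, and comparing the $u(\mathfrak h)_{\ge 2}$- and $\mathfrak h$-components of $(\Psi_Q+\Theta_Q)-(\Psi_P+\Theta_P)=\Phi_z(s)\in\mathfrak h$ recovers $\Psi_P=\Psi_Q$ and $\Theta_P-\Theta_Q=\Phi_z(-s)$, so $P\sim Q$.

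For (ii)$\leftrightarrow$(iii), the guiding observation is that $\sim$ fixes $\chi_P$ and moves $\Theta_P$ only within the coset of $\Img(\Phi_z|_{\mathfrak h})$, and $\mathscr B^+(\T)$ simply fixes a transversal. Here the explicit computation of $\Phi_z$ on $\mathfrak h$ from Table~\ref{tab:rank 2} enters case by case: $\Img(\Phi_z|_{\mathfrak h})=0$ for $(\T1),(\T2)$, so $\sim$ is trivial and $\mathscr B^+(\T)=B^+(\T)$; while $\Img(\Phi_z|_{\mathfrak h})=\k\,x$ for $(\T4)$ and $\k\,y$ for $(\T9)$ (using $\Phi_z(r)=r^{[p]}$ since $\rho_z=0$ and $\lambda=0$, together with Frobenius surjectivity on $\k$), so each class has a unique representative with $a=0$, resp.\ $b=0$. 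I expect this final step to be the main obstacle: one must verify uniformly that replacing $\Theta_P$ by its normalized form keeps the point inside $B^+(\T)$, i.e.\ that altering $a$ (resp.\ $b$) changes neither $\Psi_P$ (which depends only on $\chi_P$) nor the constraint $\rho_z(\Psi_P+\Theta_P)=0$. This is exactly where the split into the nonpermissible family $\mathcal E$ pays off, since $\rho_z=0$ on the relevant types $(\T4),(\T9)$ makes that constraint vacuous, and the only type with $\rho_z\neq 0$, namely $(\T2)$, already has trivial $\sim$.
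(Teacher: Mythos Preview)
Your proof is correct and follows essentially the same route as the paper. The paper disposes of (i)$\leftrightarrow$(ii) in one line by invoking ``the same argument as in Lemma~\ref{L:BIH2A2}'', whereas you spell out that argument explicitly (surjectivity by normalizing an arbitrary PD datum, and matching the two equivalence relations via $\Ker\partial^1=\mathfrak h$ and $\Phi_z(\mathfrak h)\subseteq\mathfrak h$); for (ii)$\leftrightarrow$(iii) both you and the paper do the same case split on $\Img(\Phi_z|_{\mathfrak h})$, with the only cosmetic difference being that the paper verifies $\rho_z(\Psi_Q+\Theta_Q)=0$ for $(\T4)$ via $\rho_z\circ\Phi_z=0$ rather than by directly noting $\rho_z=0$.
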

\begin{proof}
It follows by the same argument in Lemma \ref{L:BIH2A2} that there is a bijection between $\mathcal H^2(\T)$ and $B^+(\T)/\sim$ induced by $[(\Psi_P+\Theta_P,\chi_P)]\longleftrightarrow [P]$. 

It remains to show that there is a bijection between $B^+(\T)/\sim$ and $\mathscr B^+(\T)$ via $[P]\longleftrightarrow Q$ where $Q$ is the projection of $P$. For type (\T1) or (\T2), the sets $B^+(\T)=\mathscr B^+(\T)$ are the same. So it suffices to show that $\sim$ is trivial. It is easy to check that $\Phi_z(\mathfrak h)=0$. Then, by Definition~\ref{D:equiv}, $P\sim Q$ in $B^+(\T)$ if and only if $P=Q$. For type (\T4), denote by $Q=(0,b,c,d,e)$ the projection of $P=(a,b,c,d,e)\in B^+(\T)$. We claim that $Q\in \mathscr B^+(\T)$. Since $\Phi_z(\mathfrak h)=\field\, x$, there is some $s\in \mathfrak h$ such that $ax=\Phi_z(s)$. By Remark~\ref{R:A5} (ii), we set $\Psi_Q=\Psi_P\in u(\mathfrak h)_{\ge 2}$. By definition, $\chi_Q=\chi_P\neq 0$ and $\Phi_z(\chi_Q)=\Phi_z(\chi_P)=\partial^1(\Psi_P)=\partial^1(\Psi_Q)$. By Proposition \ref{P:Cobar} (ii),
\begin{align*}
\rho_z(\Psi_Q+\Theta_Q)& =\rho_z (\Psi_P + by) = \rho_z(\Psi_P+a\,x+b\,y)-\rho_z(a\,x) \\
&=\rho_z(\Psi_P+\Theta_P)-(\rho_z\circ \Phi_z)(s)=0.
\end{align*}
By Remark~\ref{R:A5} (iii), $Q\in \mathscr B^+(\T)$. Moreover, $P\sim Q$ since $\Theta_P - \Theta_Q = \Phi_z(s)$ and $\chi_P=\chi_Q$ by Definition~\ref{D:equiv}. Now every point in $B^+(\T)$ is equivalent to its projection in $\mathscr B^+(\T)$ where $\sim$ is trivial, so the bijection between $B^+(\T)/\sim$ and $\mathscr B^+(\T)$ is established. The argument for $(\T9)$ is similar. This completes the proof. 
\end{proof}

In Table \ref{tab:SetWC}, we give a description of the sets $\mathscr B^+(\T)$ for types $\T\in \mathcal E$. For a point $P\in \mathscr B^+(\T)$ with coordinates $a,b,c,d$ and $e$, we also list the corresponding elements $\Psi_P,\Theta_P$ and $\chi_P$ that appeared in Definition \ref{D:BSet} and Remark~\ref{R:A5} (ii). Moreover, the corresponding equivalence class in $\mathcal H^2(\T)$ is the pair $[(\Psi_P+\Theta_P,\chi_P)]$, which gives a PD datum $\mathscr D=(\T,\Psi_P+\Theta_P,\chi_P)$ with respect to $z$.  In Appendix~\ref{Appen:SetWC}, we check for the type (T2) and leave the rest to the reader. 

\FloatBarrier
\begin{table}[!htp] 
\caption{$\mathscr B^+(\T)$ for nonpermissible types} \label{tab:SetWC} \vspace{-0.2cm}
\begin{center}
\begin{tabular}{| c | p{5.5cm} | c | c | p{3.8cm} |} 
\hline 
\bf{Types} &  $\mathscr B^+(\T)$  &  $\Psi_P$  & $\Theta_P$ & $\chi_P$\\
\hline \hline
(\T1)  & $\{(a,b,c,d,e)\}\setminus\{(a,b,0,0,0)\}$ & $0$  & $a\, x+b\, y$ &   $c\, x\otimes y+\Bock(d\, x+e\, y)$\\
\hline
(\T2)  & $\{(0,b,c,d,e)\}\setminus \{(0,b,0,0,0)\}$ & $d^p\, xy^{p-1}$  & $b\, y$ &   $c\, x\otimes y+\Bock(d\, x+e\, y)$\\
\hline
(\T4)  & $\{(0,b,c,0,e)\}\setminus \{(0,b,0,0,0)\}$ & $0$  & $b\,y$ &   $c\,x\otimes y+\Bock(e\, y)$\\
\hline
(\T9)  & $\{(a,0,c,0,e)\}\setminus\{(a,0,0,0,0)\}$ & $0$  & $a\,x$ &   $c\,x\otimes y+\Bock(e\, y)$\\
\hline
\end{tabular}
\end{center}
\end{table}

Next, we establish an $\Aut(\T)$-action on $\mathscr B^+(\T)$, whose orbits are bijective to $\mathcal H^2(\T)/\Aut(\T)$. Since $\mathscr B^+(\T)\in \mathbb A^5$ is bijective to $\mathcal H^2(\T)$ by Lemma \ref{L:BIJ}, it suffices to find the induced $\Aut(\T)$-action on $\mathscr B^+(\T)$ according to Lemma \ref{L:PCDAct}. The following lemma comes directly from conditions (\ref{E:RepC}).

\begin{lem}\label{L:AutNP}
Let $\T\in \mathcal E$, and $\phi=(\gamma,G)\in \Aut(\T)$. Then $\gamma\neq 0$ and the entries of $G=(g_{ij})_{2\times 2}$ satisfy the following 
\[
\begin{cases}
g_{11}g_{22}-g_{12}g_{21}\neq 0 & \T=(\T1)\\
g_{21}=0,\, g_{11}=\gamma g_{22}\neq 0 &\T=(\T2)\\
g_{21}=g_{12}=0,\, g_{11},g_{22}\neq 0  &  \T=(\T4)\\
g_{21}=0,\, g_{22}=g_{11}^p,\, g_{11}\neq 0  &\T=(\T9).
\end{cases} 
\]
\end{lem}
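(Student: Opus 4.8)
The plan is to unwind the three automorphism conditions in Equation~(\ref{E:RepC}) for each of the four nonpermissible types separately. Recall that a pair $\phi=(\gamma,G)$ with $\gamma\in\field^\times$ and $G=(g_{ij})_{2\times 2}$ invertible belongs to $\Aut(\T)$ precisely when $\widehat{G}R=RG$, $\gamma^p\lambda=\gamma\lambda$, and $M=\gamma G^{-1}MG$. For all four types in $\mathcal E$ we have $\mathfrak g=N$ (so $\lambda=0$ and the middle condition is vacuous), which immediately forces nothing on $\gamma$ beyond $\gamma\neq 0$; the content of the lemma therefore comes entirely from the restriction-map condition $\widehat{G}R=RG$ together with the representation-invariance condition $M=\gamma G^{-1}MG$, equivalently $MG=\gamma^{-1}GM$. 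First I would record the relevant data from Table~\ref{tab:rank 2}: for (\T1) and (\T2) the algebra $\mathfrak h=A$ gives $R=0$, while for (\T4) we have $\mathfrak h=B$ with $R=e_{11}$, and for (\T9) we have $\mathfrak h=C$ with $R=e_{21}$ (reading $x^{[p]}=y$, $y^{[p]}=0$). The matrices $M$ are, respectively, $0$, $e_{12}$, $0$, $0$.

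Next I would treat each type by direct substitution. For (\T1), both $R=0$ and $M=0$, so the only surviving requirement is invertibility of $G$, i.e.\ $g_{11}g_{22}-g_{12}g_{21}\neq 0$; this gives the first line. For (\T4), the condition $\widehat{G}R=RG$ with $R=e_{11}$ reads $\widehat{G}e_{11}=e_{11}G$; computing both sides, the left side has first column $(g_{11}^p,g_{21}^p)^{\top}$ and zero elsewhere, while the right side has first row $(g_{11},g_{12})$ and zero elsewhere, forcing $g_{21}^p=0$ and $g_{12}=0$, hence $g_{21}=g_{12}=0$, with $g_{11},g_{22}\neq 0$ by invertibility (here $M=0$ imposes nothing). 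For (\T9), with $R=e_{21}$ and $M=0$, the equation $\widehat{G}e_{21}=e_{21}G$ compares the second column of $\widehat{G}$ against the second row of $G$, yielding $g_{12}^p=0$ and then $g_{21}=0$ together with $g_{11}^p=g_{22}$; combined with invertibility this is exactly $g_{21}=0$, $g_{22}=g_{11}^p$, $g_{11}\neq 0$. The genuinely different case is (\T2), where $R=0$ but $M=e_{12}$ is nonzero, so the operative condition is $MG=\gamma^{-1}GM$; writing this out with $M=e_{12}$ gives $e_{12}G$ (whose first row is $(g_{21},g_{22})$) equal to $\gamma^{-1}Ge_{12}$ (whose second column is $(\gamma^{-1}g_{11},\gamma^{-1}g_{21})^{\top}$), and matching entries forces $g_{21}=0$ and $g_{11}=\gamma g_{22}$, with $g_{22}\neq 0$ (hence $g_{11}\neq 0$) from invertibility.

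The main obstacle, such as it is, will be organizational rather than conceptual: one must correctly transcribe each $(R,M)$ pair from Table~\ref{tab:rank 2} and be careful about the direction of the Frobenius twist (the $p$-th powers live on the left factor $\widehat{G}$) and about the placement of $\gamma^{\pm 1}$ in the conjugation $M=\gamma G^{-1}MG$. I would double-check the (\T9) case in particular, since there the $R$-condition produces the nonlinear constraint $g_{22}=g_{11}^p$ rather than a mere vanishing, and one should verify that the resulting $G$ is automatically invertible (it is, since $\det G=g_{11}g_{22}=g_{11}^{\,p+1}\neq 0$). Each computation is a routine comparison of $2\times 2$ matrix entries, so I would present them compactly, perhaps remarking that the cases are uniform once one notes that $\gamma^p\lambda=\gamma\lambda$ is vacuous for $\lambda=0$, and cite Equation~(\ref{E:RepC}) as the source of all three defining conditions.
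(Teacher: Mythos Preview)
Your approach is exactly what the paper intends: the paper says only that the lemma ``comes directly from conditions~(\ref{E:RepC})'', and you are correctly unpacking those three conditions type by type. The handling of (\T1), (\T2), and (\T4) is fine.

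There is, however, a transcription error in the (\T9) case. For $\mathfrak h=C$ with $x^{[p]}=y$, $y^{[p]}=0$, the defining relation $\begin{pmatrix}x^{[p]}\\y^{[p]}\end{pmatrix}=R\begin{pmatrix}x\\y\end{pmatrix}$ gives $R=e_{12}$, not $e_{21}$. With the correct $R$ one has
\[
\widehat{G}\,e_{12}=\begin{pmatrix}0&g_{11}^p\\0&g_{21}^p\end{pmatrix},\qquad e_{12}\,G=\begin{pmatrix}g_{21}&g_{22}\\0&0\end{pmatrix},
\]
and equating entries yields $g_{21}=0$ and $g_{22}=g_{11}^p$, as stated in the lemma. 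With your choice $R=e_{21}$ the same comparison would instead force $g_{12}=0$ and $g_{11}=g_{22}^p$, which does not match the claim; the conclusions you wrote down are the correct ones, but they do not follow from the matrix you actually used. This is precisely the bookkeeping slip you flagged in your own caveat about double-checking (\T9).
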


Similar to the embedding \eqref{E:EmA3}, there is an embedding $\Aut(\T)\hookrightarrow \text{GL}_5(\field)$, which implies an $\Aut(\T)$-action on $\mathbb A^5_\field$ by left multiplication on columns, that is
\begin{align}\label{E:ActionG2}
\phi(P)
=&\ \left(
\begin{array}{ccccc}
\multicolumn{2}{c}{\multirow{2}{*}{{\large $\gamma^p\, G^{\top}$}}}& 0 &0 &0\\
& & 0&0&0\\
0&0 &\gamma\ \text{det}(G)  &  0& 0\\
0&0&0& \multicolumn{2}{c}{\multirow{2}{*}{{\large $\gamma^{\frac{1}{p}}\, G^{\top}$}}}\\
0 &0 &0& 
\end{array}
\right)
\begin{pmatrix}
a\\ b\\ c\\d\\e
\end{pmatrix}\\
=&\ \left(\gamma^{p}\,(g_{11}a+g_{21}b),\, \gamma^{p}\,(g_{12}a+g_{22}b),\, \gamma\,(g_{11}g_{22}-g_{12}g_{21})c,\right.\notag\\
  &\quad\quad\quad\quad\quad\left. \gamma^{1/p}\,(g_{11}d+g_{21}e),\, \gamma^{1/p}\,(g_{12}d+g_{22}e)\right),&\notag
\end{align}
for any $P=(a,b,c,d,e)\in \mathbb A_\field^5$ and $\phi=(\gamma,G)\in \Aut(\T)$ with $G=(g_{ij})_{2\times 2}$. 

In case $\T=(\T9)$, in order to have a well-defined group-action on $\mathscr B^+(\T)$, we replace $\Aut(\T)$ by the subgroup
\[K=\{\phi=(\gamma,G)\,|\,\gamma\neq 0, g_{12}=g_{21}=0, g_{22}=g_{11}^p\, \text{and}\, g_{11}\neq 0\}.\] 
Note that $K$ is also isomorphic to a quotient group of $\Aut(\T)$ by the subgroup $\{\phi=(\gamma,G)\in \Aut(\T)\, |\, \gamma=1, g_{11}=g_{22}=1\}$. For the sake of notation, we set
\[
\widetilde{\Aut}(\T)=
\begin{cases}
\Aut(\T)   &  \T=(\T1), \,(\T2),\, \text{or}\, (\T4)\\
K & \T=(\T9).
\end{cases}
\]

\begin{thm}\label{T:NPerT}
Let $\T\in \mathcal E$. There is a bijection $\mathcal H^2(\T)/\Aut(\T)\longleftrightarrow \mathscr B^+(\T)/\widetilde{\Aut}(\T)$ induced by
\[
\xymatrix{
\left[(\Psi_P+\Theta_P,\chi_P)\right]\ar@{<->}[rr]&&[P],
}
\]
where $P=(a,b,c,d,e)\in \mathscr B^+(\T)$, $\Theta_P=a\,x+b\,y$, $\chi_P=c\, x\otimes y+\Bock(d\, x+e\, y)$ and $\Psi_P\in u(\mathfrak h)_{\ge 2}$ given in Definition \ref{D:BSet} and Remark~\ref{R:A5} (ii).
\end{thm}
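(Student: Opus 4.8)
The plan is to prove that the map $[(\Psi_P+\Theta_P,\chi_P)]\longleftrightarrow [P]$ descends to a well-defined bijection on orbit sets by combining the bijection already established in Lemma~\ref{L:BIJ} with the $\Aut(\T)$-actions on both sides. By Lemma~\ref{L:BIJ}, we already have a bijection $\mathcal H^2(\T)\longleftrightarrow \mathscr B^+(\T)$ induced by $[(\Psi_P+\Theta_P,\chi_P)]\longleftrightarrow P$. Thus it suffices to show that this bijection is equivariant: that the $\Aut(\T)$-action on $\mathcal H^2(\T)$ from Lemma~\ref{L:PCDAct}(ii), transported to $\mathscr B^+(\T)$, agrees with the action of $\widetilde{\Aut}(\T)$ given by the matrix formula in Equation~\eqref{E:ActionG2}. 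Once equivariance is verified, passing to quotients yields the desired bijection $\mathcal H^2(\T)/\Aut(\T)\longleftrightarrow \mathscr B^+(\T)/\widetilde{\Aut}(\T)$ automatically.

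First I would recall that the $\Aut(\T)$-action on a PD datum sends $(\Theta,\chi)\mapsto(\gamma^p\,\phi_2(\Theta),\gamma\,(\phi_2\otimes\phi_2)(\chi))$, and apply this to $\Theta=\Psi_P+\Theta_P$ and $\chi=\chi_P$. The key computation is to split $\gamma^p\,\phi_2(\Psi_P+\Theta_P)$ into a piece in $u(\mathfrak h)_{\ge 2}$ and a piece in $\mathfrak h$, and to match $\gamma\,(\phi_2\otimes\phi_2)(\chi_P)$ against $\chi_{\phi(P)}$ up to a coboundary. Here the argument of Lemma~\ref{L:ActionAT} applies verbatim: there is an element $\sigma\in u(\mathfrak h)^+$ with $\chi_{\phi(P)}-\gamma\,(\phi_2\otimes\phi_2)(\chi_P)=\partial^1(\sigma)$, so that up to the equivalence $\sim$ of Definition~\ref{D:EquiR} one may replace $\gamma\,(\phi_2\otimes\phi_2)(\chi_P)$ by $\chi_{\phi(P)}$ at the cost of adding $\Phi_z(\sigma)$ to the $\Theta$-component. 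After this replacement, the $\mathfrak h$-component of the transformed $\Theta$ is exactly $\Theta_{\phi(P)}=\gamma^p(g_{11}a+g_{21}b)x+\gamma^p(g_{12}a+g_{22}b)y$, matching the first two coordinates of Equation~\eqref{E:ActionG2}, while the $\chi$-coordinates transform by $c\mapsto\gamma\det(G)\,c$ and $(d,e)\mapsto\gamma^{1/p}G^\top(d,e)^\top$, matching the last three coordinates via Equation~\eqref{E:EmA3} and Lemma~\ref{L:AutAction}. The remaining $u(\mathfrak h)_{\ge 2}$-component is then forced to equal $\Psi_{\phi(P)}$ by the uniqueness statement in Remark~\ref{R:A5}(ii).

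The step requiring the most care, and the one I expect to be the main obstacle, is the case $\T=(\T9)$, where we must replace $\Aut(\T)$ by the subgroup $K=\widetilde{\Aut}(\T)$. The issue is that a general $\phi=(\gamma,G)\in\Aut(\T)$ need not preserve the normalized representatives used to cut out $\mathscr B^+(\T)$ (recall $\mathscr B^+(\T9)$ consists of points with $b=0$), so the raw formula in Equation~\eqref{E:ActionG2} may move a point out of $\mathscr B^+(\T)$. I would resolve this by checking that $K$ acts on $\mathscr B^+(\T9)$ (using the constraints $g_{12}=g_{21}=0$, $g_{22}=g_{11}^p$ from Lemma~\ref{L:AutNP}, which guarantee the $b$-coordinate stays zero and the defining conditions of $B^+(\T)$ are preserved), and that the induced action on orbits is unchanged when one quotients $\Aut(\T)$ by the indicated subgroup. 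Concretely, I would verify that the kernel subgroup $\{\phi=(\gamma,G)\in\Aut(\T)\,|\,\gamma=1,\,g_{11}=g_{22}=1\}$ acts trivially on $\mathcal H^2(\T)$, so that the $\Aut(\T)$-orbits and the $K$-orbits on $\mathscr B^+(\T9)$ coincide under the bijection. With these compatibility checks in place, the equivariance of Lemma~\ref{L:BIJ} gives the stated bijection on quotients, and the explicit formulas for $\Theta_P$, $\chi_P$, $\Psi_P$ read off from Table~\ref{tab:SetWC}.
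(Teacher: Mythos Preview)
Your proposal is correct and follows essentially the same approach as the paper: reduce to equivariance of the bijection from Lemma~\ref{L:BIJ}, invoke Lemma~\ref{L:ActionAT} to compare $\gamma(\phi_2\otimes\phi_2)(\chi_P)$ with $\chi_{\phi(P)}$ up to a coboundary, and handle $(\T9)$ separately by checking that the subgroup $\{\phi=(\gamma,G)\in\Aut(\T)\mid \gamma=1,\ g_{11}=g_{22}=1\}$ acts trivially on $\mathcal H^2(\T)$ so that the action factors through $K=\widetilde{\Aut}(\T)$. The paper carries this out explicitly only for $(\T9)$ (computing $\sigma$ and verifying $\Phi_z(\sigma)=0$ there) and leaves the remaining types to the reader, which matches the level of detail in your outline.
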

\begin{proof}
By Lemma \ref{L:BIJ}, $\mathcal H^2(\T)$ and $\mathscr B^+(\T)$ are naturally bijective to each other. Hence, it suffices to find the corresponding $\Aut(\T)$-action on $\mathscr B^+(\T)$ and match it with Equation \eqref{E:ActionG2}. We will check for the case $\T=(\T9)$ and leave the rest to the reader. 

By Table \ref{tab:SetWC}, we choose any point $P=(a,0,c,0,e)\in \mathscr B^+(\T)$, where $c,e$ are not both zero. Since $\Psi_P=0$, the corresponding equivalence class in $\mathcal H^2(\T)$ is given by $[(\Theta_P,\chi_P)]$ with $\Theta_P=a\, x$ and $\chi_P=c\, x\otimes y+\Bock(e\, y)$. Let $\phi=(\gamma,\phi_2)=(\gamma,G)\in \Aut(\T)$. By Lemma \ref{L:AutNP}, we can write $G=\begin{pmatrix} g_{11} & g_{12}\\ 0 & g_{22}\end{pmatrix}$ with $g_{22}=g_{11}^p\neq 0$. 

We will apply Lemma \ref{L:ActionAT} to compute the $\phi$-action on $[(\Theta_P,\chi_P)]$. To avoid confusion, we use $P'$ to denote the image of $P$ under the projection from $\mathbb A^5$ to $\mathbb A^3$, that is, $P'=(c,0,e)$. Note by Remark \ref{R:A5} (i), we get $\chi_P=\chi_{P'}$. Also we denote by $\phi'$ the image of $\phi=(\gamma,G)$ in $\text{GL}_3(\field)$ by Equation \eqref{E:EmA3}. Hence, the image of $[(\Theta_P,\chi_P)]$ under the $\phi$-action is given by  
\begin{align}\label{E:phi-action}
\begin{split}
\left[(\gamma^p\, \phi_2(\Theta_P),\, \gamma\, (\phi_2\otimes\phi_2)(\chi_P))\right]=&\ \left[(\gamma^p\, \phi_2(\Theta_P),\, \gamma\, (\phi_2\otimes\phi_2)(\chi_{P'}))\right]\\ 
=&\ \left[((\gamma^p\, \phi_2(\Theta_P)+\Phi_z(\sigma),\, \chi_{\phi'(P')})\right].
\end{split}
\end{align}
It follows from $P'=(c,0,e)$ and Lemma \ref{L:ActionAT}, 
\[\sigma=\frac{1}{2}\gamma\, c\,\left(g_{11}g_{21}\,x^2+2g_{12}g_{21}\, xy+g_{12}g_{22}\, y^2\right)=\frac{1}{2}\gamma\, cg_{12}g_{22}\, y^2.\] 
When $\T=(\T9)$, by Table~\ref{tab:rank 2}, we have $z^{[p]}=0$, $x^{[p]}=y,\, y^{[p]}=0$ and $\rho_z=0$. Then, $\Phi_z(\sigma)=\sigma^p +\rho_z^{p-1}(\sigma)=0$. By Equation~\eqref{E:phi-action}, $\phi: [(\Theta_P,\chi_P)]\to [(\Theta_Q,\chi_Q)]$ with
\begin{align*}
Q=\left(\gamma^p\, ag_{11},\, \gamma^p\, ag_{12},\, \gamma\, g_{11}g_{22}c,\, 0,\, \gamma^{\frac{1}{p}}\,g_{22}c\right).
\end{align*}
Suppose $\gamma=1$ and $g_{11}=g_{22}=1$ in $\phi=(\gamma,G)$. Then $Q=(a,\,ag_{12},\, c,\, 0,\ e)$. In this case, since $P$ is the projection of $Q$, we get $P \sim Q$ in $B^+(\T)$ by Lemma \ref{L:BIJ}. By Definitions~\ref{D:EquiR} and \ref{D:equiv}, $[(\Theta_P,\chi_P)] \sim [(\Theta_Q,\chi_Q)]$ and the $\phi$-action is trivial. Consequently, the subgroup $\{\phi=(\gamma,G)\in \Aut(\T)\, |\, \gamma=1, g_{11}=g_{22}=1\}$ of $\Aut(\T)$ acts trivially on $\mathcal H^2(\T)$, whose quotient group is isomorphic to $\widetilde{\Aut}(\T)$. Therefore, we can always assume $\phi\in \widetilde{\Aut}(\T)$. Now, it is clear that $\phi: P\to Q=\phi(P)$ with $g_{12}=0$ by Equation \eqref{E:ActionG2}, which completes the proof.    
\end{proof}

For each type $\T\in \mathcal E$, we can compute the $\widetilde{\Aut}(\T)$-orbits using the group action \eqref{E:ActionG2}, and identify the isomorphism classes of quantum $p$-groups in $\mathscr X(2)$ of type $\T$ by applying Theorem~\ref{TH:HopfO} and Theorem \ref{T:NPerT}. Finally, these quantum $p$-groups can be constructed by using PD data. Classification of such quantum $p$-groups is given in Table \ref{tab:WConC}.

The results in Table \ref{tab:WConC}  can be verified briefly as follows. First of all, we can check that all points $P$ listed in Table \ref{tab:WConC}, including infinite families, lie in different $\widetilde{\Aut}(\T)$-orbits. Then it suffices to show that every point and infinite family in $\mathscr B^+(\T)$ must lie in some orbit $[P]$. Now let $\phi=(\gamma,G)\in \widetilde{\Aut}(\T)$, and $P\in \mathscr B^+(\T)$. Consider the restriction of $\phi$ on the last three coordinates of $P$. We see that points in $\mathscr B^+(\T)$, as representatives of orbits, have certain explicit last coordinates. For example, when $\T=(\T1)$, the representatives of orbits in $\mathscr B^+(\T)$ can be chosen from the forms $(a,b,1,0,0),\, (a,b,0,1,0)$ and $(a,b,1,1,0)$ for some $a,b\in \field$. It is clear that points of different forms can not belong to the same orbit. It then remains to distinguish orbits one from another within a given form, which can be done by type; see Appendix~\ref{Appen:WConC}.

\newpage
\FloatBarrier
\begin{table}[!htp] 
\caption {Quantum $p$-groups of nonpermissible types} \label{tab:WConC} \vspace{-0.2cm}
\begin{center}
\begin{tabular}{| c | p{3.1cm} | p{7.2cm} | p{2cm} |}  
\hline
\bf{Types}  & $[P]$ & $[(\Psi_P+\Theta_P,\chi_P)]$ & \bf{\# of Iso. classes} \\
\hline \hline
(\T1)  &  $P=(0,0,1,0,0)$ & $\Psi_P+\Theta_P=0,\, \chi_P=x\otimes y$ & 1 \\
      &  $P=(1,0,1,0,0)$ & $\Psi_P+\Theta_P=x,\, \chi_P=x\otimes y$ & 1 \\
      &  $P=(0,0,0,1,0)$ & $\Psi_P+\Theta_P=0,\, \chi_P=\Bock(x)$ & 1 \\
      &  $P=(1,0,0,1,0)$ & $\Psi_P+\Theta_P=x,\, \chi_P=\Bock(x)$ & 1 \\
      &  $P=(0,1,0,1,0)$ & $\Psi_P+\Theta_P=y,\, \chi_P=\Bock(x)$ &  1\\
      &  $P=(0,0,1,1,0)$ & $\Psi_P+\Theta_P=0,\, \chi_P=x\otimes y+\Bock(x)$ &1  \\
       &  $P=(1,0,1,1,0)$ & $\Psi_P+\Theta_P=x,\, \chi_P=x\otimes y+\Bock(x)$ & 1 \\
       &  $P=(0,1,1,1,0)$ & $\Psi_P+\Theta_P=y,\, \chi_P=x\otimes y+\Bock(x)$ &1  \\
\hline
(\T2)  & $P=(0,0,1,0,0)$ & $\Psi_P+\Theta_P=0,\, \chi_P=x\otimes y$ &1  \\
     & $P=(0,1,1,0,0)$ & $\Psi_P+\Theta_P=y,\, \chi_P=x\otimes y$ & 1 \\
     & $P=(0,0,0,1,0)$ & $\Psi_P+\Theta_P=xy^{p-1},\, \chi_P=\Bock(x)$ & 1 \\
     & $P=(0,1,0,1,0)$ & $\Psi_P+\Theta_P=xy^{p-1}+y,\, \chi_P=\Bock(x)$ & 1 \\
      & $P=(0,0,0,0,1)$ & $\Psi_P+\Theta_P=0,\, \chi_P=\Bock(y)$ & 1 \\
       & $P=(0,1,0,0,1)$ & $\Psi_P+\Theta_P=y,\, \chi_P=\Bock(y)$ & 1 \\
       & $P=(0,\xi,1,1,0)$ & $\Psi_P+\Theta_P=xy^{p-1}+\xi y,\,\chi_P=x\otimes y+\Bock(x)$ & $\k/\mu_2$ \\
        & $P=(0,\xi,1,0,1)$ & $\Psi_P+\Theta_P=\xi y,\, \chi_P=x\otimes y+\Bock(y)$ &  $\k$\\
\hline
(\T4)  &  $P=(0,0,1,0,0)$  &  $\Psi_P+\Theta_P=0,\,\chi_P=x\otimes y$ &1  \\
      &  $P=(0,1,1,0,0)$  &  $\Psi_P+\Theta_P=y,\, \chi_P=x\otimes y$ & 1 \\
      &  $P=(0,0,0,0,1)$  &  $\Psi_P+\Theta_P=0,\,\chi_P=\Bock(y)$ &  1\\
      &  $P=(0,1,0,0,1)$  &  $\Psi_P+\Theta_P=y,\,\chi_P=\Bock(y)$ & 1 \\
      &  $P=(0,\xi,1,0,1)$  &  $\Psi_P+\Theta_P=\xi y,\,\chi_P=x\otimes y+\Bock(y)$ & $\k/\mu_{(p-1)/2}$  \\
\hline
(\T9) & $P=(0,0,1,0,0)$ & $\Psi_P+\Theta_P=0,\,\chi_P=x\otimes y$ & 1 \\
     & $P=(1,0,1,0,0)$ & $\Psi_P+\Theta_P=x,\, \chi_P=x\otimes y$ &1 \\
     & $P=(0,0,0,0,1)$ & $\Psi_P+\Theta_P=0,\,\chi_P=\Bock(y)$ & 1\\
     & $P=(1,0,0,0,1)$ &  $\Psi_P+\Theta_P=x,\,\chi_P=\Bock(y)$ &  1\\
     & $P=(\xi,0,1,0,1)$ & $\Psi_P+\Theta_P=\xi x,\,\chi_P=x\otimes y+\Bock(y)$ & $\k/\mu_{p^2-p-1}$\\
\hline
\end{tabular}
\end{center}
\end{table}


\section{Closing discussions}
\label{close}

\noindent
{\bf Primitive deformations for $p=2$.} Throughout Sections~\ref{sec:realization} and \ref{sec:X(2)}, we set $p>2$. The primitive deformation in characteristic $p=2$ employs a similar technique except we have to use the other description of the cohomology ring for $p=2$, as seen in Lemma \ref{L:HochUH}. As a consequence, the second cohomology group $\HL^2\left(\Omega\, u(\mathfrak h)\right)$ together with its $\Aut(\T)$-action, when realized in the affine space, are different from our Lemma~\ref{L:AutAction} and Remark~\ref{R:H2A3}. We leave the case $p=2$ to readers who could utilize our technique to pursue this case for their own interest, e.g., the classification of quantum $p$-groups of dimension $8$ in characteristic $2$. 
 \vspace{0.2cm}
 
\noindent
{\bf Questions.} We close this paper by proposing some questions to be considered in future projects:
\begin{enumerate}
\item Hopf quivers of Nichols algebras in positive characteristic are studied in \cite{CLW, OZ}. The $p^3$-dimensional connected Hopf algebras $H$ classified in our sequel (cf.~\cite{NWW1}) are given by explicit generators and relations. This makes it very convenient to associate quivers $Q$ to those $H$ whose algebra structures are basic (i.e. the quotient algebra modulo the Jacobson radical is a copy of the base field $\k$). How do these related quivers look like? 

 \vspace{0.2cm}
 \item In Section~\ref{sec:X(2)}, we obtain many infinite parametric families of finite-dimensional Hopf algebras in positive characteristic. Note that in \cite[Theorem 2.2]{NgSc}, two finite-dimensional Hopf algebras over $\k$ are gauge equivalent if their finite representations are $\k$-linear tensor equivalent. We want to know whether Hopf algebras of each parametric family in Section~\ref{sec:X(2)} are all gauge equivalent?
 
 \vspace{0.2cm}
 \item In this paper, we provide quite a few examples of non-commutative and non-cocommutative Hopf algebras in positive characteristic (e.g. those non-commutative algebras in Tables~\ref{tab:ConC} and \ref{tab:WConC} with element $\chi$ having nonzero coefficient in the $x\otimes y$ term). What are the cohomology rings $\HL^*(H,\k):=\Ext_H^*(\k,\k)$ of such Hopf algebras?
\end{enumerate}


\appendix 


\section{Classifications of $p^3$-dimensional \\ connected Hopf algebras} 
\label{Appen:complete classification}

Let $H$ be a $p^3$-dimensional connected Hopf algebra over an algebraically closed field $\k$ of characteristic $p>0$. Let $u(\Prim(H))$ be the Hopf subalgebra of $H$ generated by all primitive elements. The isomorphism classes of $H$ will be presented by a quotient of the free algebra $\k\langle x,y,z \rangle /I$. The defining relation $I$ and the comultiplication are provided in terms of the generators $x$, $y$ and $z$. We follow the notation in \cite{Hen} to write the reduced comultiplication of $H$ by $\psi$, for example, $\psi(x)=\Delta(x)-x\otimes 1-1\otimes x$. Throughout, we denote the expressions 
\begin{align*}
\Bock(r)&:=\sum_{1\le i\le p-1}\frac{(p-1)!}{i!\,(p-i)!}\, \left(r^i\otimes r^{p-i}\right), \quad \text{ for any } r \in H, \\ 
\mathcal Z &:=\Bock(x)[y\otimes 1+1\otimes y+\Bock(x)]^{p-1}+\Bock(y), \text{ and} \\
\mathcal Z' &:=\Bock(x)(y\otimes 1+1\otimes y)^{p-1}+\Bock(y).
\end{align*}
By Nichols-Zoeller Theorem, dimension of $u(\Prim(H))$ divides dimension of $H$. Hence, our strategy is to consider all possible dimensions of $u(\Prim(H))$ when $H$ is $p^3$-dimensional. The classifications in Tables~\ref{tab:typeA}, \ref{tab:typeB}, and \ref{tab:typeC} were detailed in our preceding paper \cite{NWW1}, where we classified the types A, B, and C according to the dimensions $p$, $p^2$ (non-commutative), and $p^3$ of $u(\Prim(H))$, respectively. For each type, we provide the isomorphism classes, their algebra and coalgebra structures, and indicate whether they are commutative, semisimple, or local. The main results in Section~\ref{sec:X(2)} classify those in Table \ref{tab:typeT}, where $u(\Prim(H))$ is a $p^2$-dimensional commutative Hopf subalgebra of $H$. 

\newpage
\FloatBarrier
\begin{table}[!htp]
\begin{center}
\caption{Connected $p^3$-dim Hopf algebras, when $\dim u(\Prim(H)) = p$}  \label{tab:typeA} \vspace{-0.2cm}
\begin{tabular}{| c | c | p{5cm} | p{4.5cm} | c |} 
\hline       
\bf{Type} & \bf{Comm.} & \bf{Alg. Structure}       & \bf{Coalg. Structure} & \bf{S.S./Loc.} \\
\hline   \hline  
A1    &  y &$x^p-x=y^p-y=z^p-z=0$   &$\psi(x)=0$, $\psi(y)=\Bock(x)$, $\psi(z)=\mathcal Z$  & y/n \\ \hline
A2    & y &$x^p=y^p-x=z^p-y=0$   &$\psi(x)=0$, $\psi(y)=\Bock(x)$, $\psi(z)=\mathcal Z'$ & n/y \\ \hline
A3    & y &$x^p=y^p=z^p=0$   &$\psi(x)=0$, $\psi(y)=\Bock(x)$, $\psi(z)=\mathcal Z'$ & n/y\\ \hline
A4    & y&$x^p=y^p=z^p-x=0$   &$\psi(x)=0$, $\psi(y)=\Bock(x)$, $\psi(z)=\mathcal Z'$ & n/y\\ \hline
A($\lambda$)    &n  &$x^p=y^p=z^p+x^{p-1}y-\lambda x=0, [x,y]=[x,z]=[y,z]-x=0$   &$\psi(x)=0$, $\psi(y)=\Bock(x)$, $\psi(z)=\mathcal Z'$& \multirow{2}*{n/y} \\ \hline
\end{tabular}
\end{center}
\end{table}
For the parametric family A($\lambda$), we can choose $\lambda=0$ when $p=2$. When $p>2$, A$(\lambda)\cong$ A$(\lambda')$ iff $\lambda=\gamma \lambda'$ for some $\gamma\in \sqrt[p^2+p-1]{1}$.

\FloatBarrier
\begin{table}[!htp]
\begin{center}
\caption{Connected $p^3$-dim Hopf algebras, when $\dim u(\Prim(H))= p^2$ and $u(\Prim(H))$ is non-commutative}\label{tab:typeB} \vspace{-0.2cm}
\begin{tabular}{| c | c | p{5.1cm} | p{4.5cm} | c |} 
\hline 
\bf{Type} & \bf{Comm.} & \bf{Alg. Structure}       & \bf{Coalg. Structure} & \bf{S.S./Loc.} \\
\hline   \hline
B1     & n & $x^p-x=y^p=z^p=0, [x,y]-y=[x,z]=[y,z]=0$   &$\psi(x)=\psi(y)=0$, $\psi(z)=\Bock(y)$ & n/n\\ \hline
B2 & n& $x^p-x=y^p=z^p-z=0, [x,y]-y=[x,z]=[y,z]-yf(x)=0$  &$\psi(x)=\psi(y)=0$, $\psi(z)=\Bock(x)$ & n/n\\ \hline
B3 &n  & $x^p-x=y^p=z^p=0, [x,y]-y=[x,z]-z=[y,z]-y^2=0$  &$\psi(x)=\psi(y)=0$, $\psi(z)=-2x\otimes y$ & n/n\\ \hline
\end{tabular}
\end{center}
\end{table}
(B3) only appears in $p>2$. In the algebra structure of (B2), we let 
$$f(x)=\sum_{i=1}^{p-1} (-1)^{i-1}(p-i)^{-1}x^i.$$ \\

\FloatBarrier
\begin{table}[!htp]
\begin{center}
\caption{Connected $p^3$-dim Hopf algebras, when $\dim u(\Prim(H)) = p^3$} \label{tab:typeC} \vspace{-0.2cm}
\begin{tabular}{| c | c | p{5cm} | p{4.5cm} | c |} 
\hline
\bf{Type} & \bf{Comm.} & \bf{Alg. Structure}       & \bf{Coalg. Structure} & \bf{S.S./Loc.} \\
\hline    \hline  
C1   & y & $x^p-x=y^p-y=z^p-z=0$   &$\psi(x)=\psi(y)=\psi(z)=0$ & y/n\\ \hline
C2   & y& $x^p-y=y^p-z=z^p=0$   &$\psi(x)=\psi(y)=\psi(z)=0$ & n/y\\ \hline
C3     &y& $x^p=y^p-z=z^p=0$   &$\psi(x)=\psi(y)=\psi(z)=0$ & n/y\\ \hline
C4   &y& $x^p=y^p=z^p=0$   &$\psi(x)=\psi(y)=\psi(z)=0$ & n/y\\ \hline
C5  &n& $x^p=y^p=z^p=0, [x,y]-z=[x,z]=[y,z]=0$   &$\psi(x)=\psi(y)=\psi(z)=0$ & n/y\\ \hline
C6  &n& $x^p-z=y^p=z^p=0, [x,y]-z=[x,z]=[y,z]=0$   &$\psi(x)=\psi(y)=\psi(z)=0$ & n/y\\ \hline
C7   & y & $x^p=y^p=z^p-z=0$   &$\psi(x)=\psi(y)=\psi(z)=0$ & n/n\\ \hline
C8 &y& $x^p-y=y^p=z^p-z=0$   &$\psi(x)=\psi(y)=\psi(z)=0$ & n/n\\  \hline
C9    &y& $x^p=y^p-y=z^p-z=0$   &$\psi(x)=\psi(y)=\psi(z)=0$ & n/n\\ \hline
C10  &n& $x^p=y^p=z^p-z=0, [x,y]-z=[x,z]=[y,z]=0$    &$\psi(x)=\psi(y)=\psi(z)=0$ & n/n\\ \hline
C11    &n& $x^p-x=y^p=z^p=0, [x,y]-y=[x,z]=[y,z]=0$   &$\psi(x)=\psi(y)=\psi(z)=0$ & n/n\\ \hline
C12 &n& $x^p-x=y^p-z=z^p=0, [x,y]-y=[x,z]=[y,z]=0$   &$\psi(x)=\psi(y)=\psi(z)=0$ & n/n\\ \hline
C13     &n& $x^p-x=y^p=z^p-z=0, [x,y]-y=[x,z]=[y,z]=0$   &$\psi(x)=\psi(y)=\psi(z)=0$ & n/n\\ \hline
C14   &n& $x^p-x=y^p-z=z^p-z=0, [x,y]-y=[x,z]=[y,z]=0$   &$\psi(x)=\psi(y)=\psi(z)=0$ & n/n\\ \hline
C15    &n& $x^p=y^p=z^p-z=0, [x,y]-z=[x,z]-x=[y,z]+y=0$   &$\psi(x)=\psi(y)=\psi(z)=0$ & n/n\\ \hline
C$(\lambda,\delta)$ &n& $x^p=y^p=z^p-\delta z=0, [x,y]=[x,z]-\lambda x=[y,z]-\lambda^{-1}y=0$   &$\psi(x)=\psi(y)=\psi(z)=0$ & n/n\\ \hline
\end{tabular}
\end{center}
\end{table}
(C15) only appears in $p>2$ and C$(\lambda,\delta)$ satisfies $\lambda^{p-1}=\delta=\pm 1$. We also have C$(\lambda,\delta)\cong$ C$(\lambda',\delta')$ iff $\delta=\delta'$ and $\lambda=\lambda'$ or $\lambda\cdot\lambda'=1$.


\FloatBarrier
\begin{table}[!htp]
\begin{center}
\caption{Connected $p^3$-dim Hopf algebras, when $p>2$, $\dim u(\Prim(H)) = p^2$ and $u(\Prim(H))$ is commutative}\label{tab:typeT} \vspace{-0.2cm}
\begin{tabular}{| c | p{6cm} | p{3.1cm} | p{1.9cm} | p{1.7cm} |} 
\hline
\bf{Type} & \bf{Alg. Structure}  & \bf{Coalg. Structure} & \bf{\# of Iso. classes} & \bf{Results} \\
\hline   \hline  
T1  & $x^p=y^p=z^p+\Theta=0, [x,y]=[z,x]=[z,y]=0$   &$\psi(x)=\psi(y)=0$, $\psi(z)=\chi$   & 8 classes &  Table \ref{tab:WConC}  \\  \hline
T2  & $x^p=y^p=z^p+\Theta=0, [x,y]=[z,x]-y=[z,y]=0$   &$\psi(x)=\psi(y)=0$, $\psi(z)=\chi$   & 6 classes \& 2 families &  Table \ref{tab:WConC}  \\  \hline
T3 &  $x^p=y^p=z^p-z+\Theta=0, [x,y]=[z,x]=[z,y]=0$  &$\psi(x)=\psi(y)=0$, $\psi(z)=\chi$   & none &  Table \ref{tab:rank 2}  \\  \hline
T4 &  $x^p-x=y^p=z^p+\Theta=0, [x,y]=[z,x]=[z,y]=0$   &$\psi(x)=\psi(y)=0$, $\psi(z)=\chi$   & 4 classes \& 1 family &  Table \ref{tab:WConC}  \\  \hline
T5 &  $x^p-x=y^p=z^p+\Theta=0, [x,y]=[z,x]=[z,y]-x=0$   &$\psi(x)=\psi(y)=0$, $\psi(z)=\chi$   & 1 class \& 1 family &  Table \ref{tab:ConC}  \\  \hline
T6 &  $x^p-x=y^p=z^p-z+\Theta=0, [x,y]=[z,x]=[z,y]=0$   &$\psi(x)=\psi(y)=0$, $\psi(z)=\chi$   & 1 class &  Table \ref{tab:ConC}  \\  \hline
T7 &  $x^p-x=y^p=z^p-z+\Theta=0, [x,y]=[z,x]=[z,y]-y=0$  &$\psi(x)=\psi(y)=0$, $\psi(z)=\chi$   & 3 classes &  Table \ref{tab:ConC}  \\  \hline
T8 &  $x^p-x=y^p=z^p-z+\Theta=0, [x,y]=[z,x]=[z,y]-x-y=0$  &$\psi(x)=\psi(y)=0$, $\psi(z)=\chi$   & 2 families &  Table \ref{tab:ConC}  \\  \hline
T9 &  $x^p-y=y^p=z^p+\Theta=0, [x,y]=[z,x]=[z,y]=0$  &$\psi(x)=\psi(y)=0$, $\psi(z)=\chi$   & 4 classes \& 1 family &  Table \ref{tab:WConC}  \\  \hline
T10 &  $x^p-y=y^p=z^p+\Theta=0, [x,y]=[z,x]-y=[z,y]=0$  &$\psi(x)=\psi(y)=0$, $\psi(z)=\chi$   & 1 class \& 1 family &  Table \ref{tab:ConC}  \\  \hline
T11 &  $x^p-y=y^p=z^p-z+\Theta=0, [x,y]=[z,x]=[z,y]=0$  &$\psi(x)=\psi(y)=0$, $\psi(z)=\chi$   & none &  Table \ref{tab:rank 2}  \\  \hline
T12 & $x^p-y=y^p=z^p-z+\Theta=0, [x,y]=[z,x]-x=[z,y]=0$  &$\psi(x)=\psi(y)=0$, $\psi(z)=\chi$   & 1 class &  Table \ref{tab:ConC}  \\  \hline
T13 &  $x^p-x=y^p-y=z^p+\Theta=0, [x,y]=[z,x]=[z,y]=0$  &$\psi(x)=\psi(y)=0$, $\psi(z)=\chi$   & none &  Table \ref{tab:rank 2}  \\  \hline
T14 &  $x^p-x=y^p-y=z^p-z+\Theta=0, [x,y]=[z,x]=[z,y]=0$   &$\psi(x)=\psi(y)=0$, $\psi(z)=\chi$   & 3 classes &  Table \ref{tab:ConC}  \\  \hline
T($\zeta$) & $x^p=y^p=z^p-z+\Theta=0, [x,y]=[z,x]-x=[z,y]-\zeta y=0$, for $\zeta\in \mathbb F_p \setminus \{-1\}$ &$\psi(x)=\psi(y)=0$, $\psi(z)=\chi$   & $(p+1)/2$ classes  &  Table \ref{tab:ConC}  \\  \hline
\end{tabular}
\end{center}
\end{table}

Hence, these tables provide the complete classification of $p^3$-dimensional connected Hopf algebras over an algebraically closed field of positive characteristic.

\newpage  

\section{Verifications for Section~\ref{sec:X(2)}}

\subsection{Verification of Table \ref{tab:ConC} for $\T=(\T5)$}\label{Appen:ConC}
Note that $z^{[p]}=0, x^{[p]}=x, y^{[p]}=0$, and $\rho_z(x)=0,\rho_z(y)=x$. Let $P=(a,b,c)\in \mathbb A^3\setminus\{0\}$. By Proposition~\ref{P:Cobar} (iii), a direct computation in $\HL^2(\Omega\, u(\mathfrak h))$ shows that  
\begin{align*}
&\ [\Phi_z(\chi_P)]\\
=&\ [\chi_P^p+\rho_z^{p-1}(\chi_P)]\\
=&\ [(a\, x\otimes y+\Bock(b\, x+c\, y))^p+\rho_z^{p-1}(a\, x\otimes y)+\rho_z^{p-1}\Bock(b\, x+c\, y)]\\
=&\ [a^p\, x^p\otimes y^p+\Bock(b^p\, x^p+c^p\, y^p)+a\,\rho_z^{p-2}(\rho_z(x)\otimes y+x\otimes \rho_z(y))+\rho_z^{p-1}\Bock(b\, x+c\, y)]\\
=&\ [\Bock(b^p\, x)+a\,\rho_z^{p-2}(x\otimes x)]+[\rho_z^{p-1}\Bock(b\, x+c\, y)]\\
=&\ [\Bock(b^p\, x)]+[\rho_z^{p-1}\Bock(b\, x+c\, y)]\\
=&\ [\Bock(b^p\, x)].
\end{align*}
If $P\in \mathscr A^+(\T)$, one sees that $b=0$ since $[\Phi_z(\chi_P)]=0$ by Remark \ref{R:CCzero}. 

Now, let $P=(a,0,c)\in \mathbb A^3\setminus\{0\}$. Thus, $\Phi_z(\chi_P)$ can be written as 
\begin{align*}
\Phi_z(\chi_P)=&\ \rho_z^{p-1}\Bock(c\, y)\\
=&\ \partial^1\left(-\sum_{i_1+\cdots+i_p=p-2}\, \frac{(p-2)!}{i_1!\cdots i_p!}\, \rho_z^{i_1}(c\, y)\cdots \rho_z^{i_{p-1}}(c\, y)\rho_z^{1+i_p}(c\, y)\right)\\
=&\ \partial^1\left(-\sum_{i_1+\cdots+i_p=p-2}\, \frac{(p-2)!}{i_1!\cdots i_p!}\, c^p\, \rho_z^{i_1}(y)\cdots \rho_z^{i_{p-1}}(y)\rho_z^{i_p}(x)\right).
\end{align*}
Since $\rho_z(y)=x$ and $\rho_z^i(y)=0$ for $i\ge 2$, nonzero terms occur in the above summation only if $i_p=0$ and, for $1\le k\le p-1$, all $i_k$'s equal to $1$ except one of the $i_k$'s is zero. Hence,  
\begin{align*}
\Phi_z(\chi_P)=\partial^1(-(p-1)(p-2)!\, c^p\, x^{p-1}y)=\partial^1(c^p\, x^{p-1}y).
\end{align*}
Set $\Theta=c^p(x^{p-1}y-y)\in u(\mathfrak h)^+$. Then $\Phi_z(\chi_P)=\partial^1(\Theta)$ since $\partial^1(y)=0$ and $\rho_z(\Theta)=\rho_z(c^p(x^{p-1}y-y))=c^p(x^{p-1}\rho_z(y)-\rho_z(y))=c^p(x^p-x)=0$. By Proposition \ref{P:RCdata}, $(\Theta,\chi_P)$ is PD datum and $P\in \mathscr A^+(\T)$ by Remark \ref{R:APCD}. In conclusion, $\mathscr A^+(\T)$ contains all points $P=(a,0,c)\in \mathbb A^3\setminus\{0\}$. 

Next, we compute the $\Aut(\T)$-action on $\mathscr A^+(\T)$. Let $\phi=(\gamma,G)\in \Aut(\T)$. Since $R=e_{11}$, $M=e_{21}$ and $\lambda=\delta=0$, by conditions \eqref{E:RepC}, $G=\text{diag}(\alpha,\beta)$ and $\beta=\alpha\gamma$, where $\alpha,\beta,\gamma$ are nonzero scalars with $\alpha^p=\alpha$. According to Equation \eqref{E:EmA3}, the $\phi$-action on $P=(a,0,c)\in \mathscr A^+(\T)$ is given by
\begin{align*}
\phi(a,0,c)=(\gamma^2\alpha^2\, a,\, 0,\, \gamma^{\frac{1+p}{p}}\alpha\ c).
\end{align*}
A simple calculation shows that the $\Aut(\T)$-orbits of $\mathscr A^+(\T)$ contains one single point and one quotient line. The single point can be represented by $(1,0,0)$ and the quotient line is in terms of $(\xi,0,1)$, where two points given by $(\xi,0,1)$ and $(\xi',0,1)$ are in the same orbit if and only if $\xi=\tau\xi'$ for some $(p^2-1)/2$-th root of unity $\tau$. Furthermore, by Theorem \ref{T:BPT}, orbits $[(1,0,0)]$ and $[(\xi,0,1)]$ correspond to $[(0,x\otimes y)]$ and $[(x^{p-1}y-y,\xi x\otimes y+\Bock(y))]$ in $\mathcal H^2(\T)$, respectively.
\\


\subsection{Verification of Table \ref{tab:SetWC} for $\T=(\T2)$} \label{Appen:SetWC}
Note that $z^{[p]}=0, x^{[p]}=y^{[p]}=0$, and $\rho_z(x)=y,\rho_z(y)=0$. Let $P=(a,b,c,d,e)\in \mathbb A^2\times \mathbb A^3$. By Proposition \ref{P:Cobar} (iii),
\begin{align*}
&\ \Phi_z(\chi_P)\\
=&\ \rho_z^{p-1}\Bock(d\, x+e\, y)\\
=&\ \partial^1\left(-\sum_{i_1+\cdots+i_p=p-2}\ \frac{(p-2)!}{i_1!\cdots i_p!}\, \rho_z^{i_1}(d\, x+e\, y)\cdots \rho_z^{i_{p-1}}(d\, x+e\, y)\rho_z^{1+i_p}(d\, x+e\, y)\right)\\
=&\ \partial^1\left(-\sum_{i_1+\cdots+i_p=p-2}\ \frac{(p-2)!}{i_1!\cdots i_p!}\, \rho_z^{i_1}(d\, x)\cdots \rho_z^{i_{p-1}}(d\, x)\rho_z^{1+i_p}(d\, x)\right)\\
=&\ \partial^1\left(-\sum_{i_1+\cdots+i_p=p-2}\ \frac{(p-2)!}{i_1!\cdots i_p!}\, d^p\, \rho_z^{i_1}(x)\cdots \rho_z^{i_{p-1}}(x)\rho_z^{i_p}(y)\right)\\
=&\ \partial^1(-(p-1)(p-2)!\, d^p\, xy^{p-1})\\
=&\ \partial^1(d^p\, xy^{p-1}).
\end{align*}
Then,
\begin{align*}
\Phi_z(\chi_P)=&\ \chi_P^p-\delta\chi_P+\rho_z^{p-1}(\chi_P)\\
=&\ \rho_z^{p-1}(c\, x\otimes y)+\rho_z^{p-1}\Bock(d\, x+e\, y)\\
=&\ c\, \rho_z^{p-2}(\rho_z(x)\otimes y+x\otimes \rho_z(y))+\partial^1(d^p\, xy^{p-1})\\
=&\ c\, \rho_z^{p-2}(y\otimes y)+\partial^1(d^p\, xy^{p-1})\\
=&\ \partial^2(d^p\, xy^{p-1}).
\end{align*}
Suppose $P\in \mathscr B^+(\T)$. By Definition \ref{D:BSet}, we set $\Psi_P=d^p\, xy^{p-1}$. Since $\rho_z(d^p\, xy^{p-1}+a\, x+b\, y)=d^p\, y^p+a\, y=a\, y=0$, one sees that $a=0$. Hence, the set $\mathscr B^+(\T)$ contains all points $P=(0,b,c,d,e)$ with $\Psi_P=d^p\, xy^{p-1}$, where $c,d,e$ are not all zero. 
\\


\subsection{Verification of Table \ref{tab:WConC}}\label{Appen:WConC}

\bf{Case (\T=\T1):}\rm\ The representative points $P\in \mathscr B^+(\T)$ have one of the following forms:
\[
(a,b,1,0,0),\ (a,b,0,1,0),\ (a,b,1,1,0),\ \mbox{for some}\ a,b\in \k.
\]

Let $P=(a,b,1,0,0)$. If $(a,b)=(0, 0)$, then $P=(0,0,1,0,0)$. Now suppose $(a,b)\neq (0, 0)$. If $a\neq 0$, then choose $\phi=(\gamma,G)\in \widetilde{\Aut}(\T)$ such that
\[\gamma=a^{\frac{1}{1-p}},\ G=\begin{pmatrix} a^{\frac{1}{p-1}} & -ba^{-1} \\  0 &1  \end{pmatrix}.\]
By \eqref{E:ActionG2}, we have $\phi(P)=(1,0,1,0,0)$. Otherwise $a=0, b\neq 0$, then choose $\phi=(\gamma,G)\in \widetilde{\Aut}(\T)$ such that
\[\gamma=(-b)^{-\frac{1}{p-1}},\ G=\begin{pmatrix} 0 & 1\\ -(-b)^{\frac{1}{p-1}}& 0 \end{pmatrix}.\]
Again, we have $\phi(P)=(1,0,1,0,0)$.

Let $P=(a,b,0,1,0)$. If $(a,b)=(0,0)$, then $P=(0,0,0,1,0)$. Now suppose $(a,b)\neq (0,0)$. If $b\neq 0$, then choose $\phi=(\gamma,G)\in \widetilde{\Aut}(\T)$ such that 
\[
\gamma=1,\ G=\begin{pmatrix} 1&0\\ -b^{-1}a& b^{-1}  \end{pmatrix}.
\]

By \eqref{E:ActionG2}, we have $\phi(P)=(0,1,0,1,0)$. Otherwise $b=0,a\neq 0$, then choose $\phi=(\gamma,G)\in\widetilde{\Aut}(\T)$ such that
\[
\gamma=a^{-\frac{p}{p^2-1}},\ G=\begin{pmatrix} a^{\frac{1}{p^2-1}}& 0 \\ 0 &1  \end{pmatrix}.
\]
Similarly, we have $\phi(P)=(1,0,0,1,0)$.

Let $P=(a,b,1,1,0)$. If $(a,b)=(0,0)$, then $P=(0,0,1,1,0)$. If $b\neq 0$, then choose $\phi\in\widetilde{\Aut}(\T)$ such that
\[
\gamma=b^{-\frac{p}{p^2-p+1}},\ G=\begin{pmatrix} b^{\frac{1}{p^2-p+1}} & 0 \\ -ab^{\frac{1}{p^2-p+1}-1}& b^{\frac{p-1}{p^2-p+1}} \end{pmatrix}.
\]
By \eqref{E:ActionG2}, we have $\phi(P)=(0,1,1,1,0)$. Otherwise $b=0,a\neq 0$, then choose $\phi=(\gamma,G)\in\widetilde{\Aut}(\T)$ such that
\[
 \gamma=a^{-\frac{p}{p^2-1}},\ G=\begin{pmatrix} a^{\frac{1}{p^2-1}} & 0 \\ 0& a^{\frac{1}{p^+1}}  \end{pmatrix}.\]
Hence, we have $\phi(P)=(1,0,1,1,0)$. \\

\bf{Case (\T=\T2):}\rm\ The representative points $P$ have one of the following forms:
\[
(0,b,1,0,0),\ (0,b,0,1,0),\ (0,b,0,0,1),\ (0,b,1,1,0),\ (0,b,1,0,1),\ \mbox{for some}\ b\in \k.
\]
If $P$ is one of the first three cases $(0,b,1,0,0), (0,b,0,1,0), (0,b,0,0,1)$, it is easy to find some $\phi\in \widetilde{\Aut}(\T)$ such that the parameter $b$ can be further taken as $\delta=0,1$. Hence, we obtain the first six isomorphism classes. By Lemma \ref{L:AutNP}, for any $\phi=(\gamma,G)\in \widetilde{\Aut}(\T)$, we can write
\[
G=\begin{pmatrix}
\alpha\gamma & \beta\\
0  & \alpha
\end{pmatrix}
\]
for some $\alpha\neq 0$. Suppose $P=(0,b,1,1,0)$. Then by \eqref{E:ActionG2}, 
\[
\phi(0,b,1,1,0)=\left(0,\, \gamma^p\alpha b,\, (\alpha \gamma)^2,\, \alpha\gamma^{\frac{1+p}{p}},\, 0\right).
\]
Since $(\alpha\gamma)^2=\alpha\gamma^{\frac{1+p}{p}}=1$, we have $\alpha\gamma=\pm 1$. Thus $\gamma^p\alpha=\pm 1$. So the parameter $b$ is parametrized by $\k/\mu_2$. Suppose $P=(0,b,1,0,1)$. Then,
\[
\phi(0,b,1,0,1)=\left(0,\, \gamma^p\alpha b,\, (\alpha \gamma)^2,\, 0,\, \alpha\gamma^{\frac{1}{p}}\right).
\]
By the same reason that $(\alpha\gamma)^2=\alpha\gamma^{\frac{1}{p}}=1$, then $\gamma^p\alpha=1$. So the parameter $b$ is parametrized by $\k$. \\

\bf{Case (\T=\T4):}\rm\ The representative points $P$ have one of the following forms: 
\[
(0,b,1,0,0),\ (0,b,0,0,1),\ (0,b,1,0,1),\ \mbox{for some}\ b\in \k.
\]
Let $P=(0,b,1,0,0)$. If $b=0$, then $P=(0,0,1,0,0)$. If $b\neq 0$, choose $\phi=(\gamma,G)\in \widetilde{\Aut}(\T)$ such that
\[\gamma=b^{-\frac{1}{p-1}},\ G=\begin{pmatrix} 1 & 0\\ 0& b^{\frac{1}{p-1}} \end{pmatrix}.\]
By \eqref{E:ActionG2}, we have $\phi(P)=(0,1,1,0,0)$.

Let $P=(0,b,0,0,1)$. If $b=0$, then $P=(0,0,0,0,1)$. If $b\neq 0$, choose $\phi=(\gamma,G)\in \widetilde{\Aut}(\T)$ such that
\[\gamma=b^{-\frac{p}{p^2-1}},\ G=\begin{pmatrix} 1 & 0\\ 0& b^{\frac{1}{p^2-1}} \end{pmatrix}.\]
Similarly, we have $\phi(P)=(0,1,0,0,1)$.

Let $P=(0,b,1,0,1)$. By Lemma \ref{L:AutNP}, for any $\phi=(\gamma,G)\in \widetilde{\Aut}(\T)$, we have $G=\Diag(\alpha,\beta)$. Thus by \eqref{E:ActionG2},
\[\phi(P)=\left(0,\, \gamma^p\beta b,\, \gamma\alpha\beta,\, 0,\, \gamma^{\frac{1}{p}}\beta\right).\]
Since $\gamma\alpha\beta=\gamma^{\frac{1}{p}}\beta=1$, we have $\beta=\gamma^{-\frac{1}{p}},\gamma=\alpha^{-\frac{p}{p-1}}$. So $\gamma^p\beta b=\alpha^{-1-p}b=\alpha^{-2}b$ for $\alpha\in \mathbb F_p^\times$. Hence the orbits containing $P$ is parametrized by $\k/(\mathbb F_p^\times )^2$. \\

\bf{Case (\T=\T9):}\rm\ The representative points $P$ have one of the following forms:
\[
(a,0,1,0,0),\ (a,0,0,0,1),\ (a,0,1,0,1),\ \mbox{for some}\ b\in \k.
\]
Let $P=(a,0,1,0,0)$. If $a=0$, then $P=(0,0,1,0,0)$. If $a\neq 0$, choose $\phi=(\gamma,G)\in\widetilde{\Aut}(\T)$ such that
\[\gamma=a^{-\frac{p+1}{p^2+p-1}},\ G=\begin{pmatrix} a^{\frac{1}{p^2+p-1}} & 0\\ 0& a^{\frac{p}{p^2+p-1}} \end{pmatrix}.\]
Then by \eqref{E:ActionG2}, we have $\phi(P)=(1,0,1,0,0)$.

Let $P=(a,0,0,0,1)$. If $a=0$, then $P=(0,0,0,0,1)$. If $a\neq 0$, choose $\phi=(\gamma,G)\in\widetilde{\Aut}(\T)$ such that
\[\gamma=a^{-\frac{p^2}{p^3-1}},\ G=\begin{pmatrix} a^{\frac{1}{p^3-1}} & 0\\ 0& a^{\frac{p}{p^3-1}} \end{pmatrix}.\]
Similarly, we have $\phi(P)=(1,0,0,0,1)$.

Let $P=(a,0,1,0,1)$. For any $\phi=(\gamma,G)\in\widetilde{\Aut}(\T)$, we can write $G=\Diag(\alpha,\alpha^p)$. Thus
\[\phi(P)=\left(\gamma^p\alpha a,\, 0,\, \gamma\alpha^{p+1},\, 0,\, \gamma^{\frac{1}{p}}\alpha^p\right).\]
Since $\gamma\alpha^{p+1}=\gamma^{\frac{1}{p}}\alpha^p=1$, so $\gamma=\alpha^{-p-1}$ and $\alpha^{p^2-p-1}=1$. Hence $\gamma^p\alpha a=\alpha^{-p^2-p+1}a=\alpha^{-2p}a$. Moreover, since $2p$ and $p^2-p-1$ are coprime, the orbits containing $P$ is parametrized by $\k/\mu_{(p^2-p-1)}$.


\end{document}